\def\@tocline#1#2#3#4#5#6#7{\relax
  \ifnum #1>\c@tocdepth 
  \else
    \par \addpenalty\@secpenalty\addvspace{#2}%
    \begingroup \hyphenpenalty\@M
    \@ifempty{#4}{%
      \@tempdima\csname r@tocindent\number#1\endcsname\relax
    }{%
      \@tempdima#4\relax
    }%
    \parindent\z@ \leftskip#3\relax \advance\leftskip\@tempdima\relax
    \rightskip\@pnumwidth plus4em \parfillskip-\@pnumwidth
    #5\leavevmode\hskip-\@tempdima
      \ifcase #1
      \or\or \hskip 1em \or \hskip 2em \else \hskip 3em \fi%
      #6\nobreak\relax
    \dotfill\hbox to\@pnumwidth{\@tocpagenum{#7}}\par
    \nobreak
    \endgroup
  \fi}
  \def\l@subsection{\@tocline{2}{0pt}{2pc}{5pc}{}}
\numberwithin{equation}{section}
\newtheorem{thm}{Theorem}[section]
\newtheorem*{thm*}{Theorem}
\newtheorem{mydef}[thm]{Definition}
\newtheorem{lem}[thm]{Lemma}
\newtheorem*{lem*}{Lemma}
\newtheorem{cor}[thm]{Corollary}
\newtheorem{prop}[thm]{Proposition}
\newtheorem*{prop*}{Proposition}
\newtheorem{rem}[thm]{Remark}
\newcommand{\R}{\mathbb{R}}
\newcommand{\C}{\mathbb{C}}
\newcommand{\N}{\mathbb{N}}
\newcommand{\Leray}{\mathcal{P}}
\begin{document}


\title[Local weak solutions of an NSE-NLS model of superfluids]{Local weak solutions to a Navier-Stokes-nonlinear-Schr\"odinger model of superfluidity}

\author[Jayanti]{Pranava Chaitanya Jayanti}
\address[Jayanti]{\newline
Department of Physics \\ University of Maryland \\ College Park, MD 20742, USA.}
\email[]{\href{jayantip@umd.edu}{jayantip@umd.edu}}

\author[Trivisa]{Konstantina Trivisa}
\address[Trivisa]{\newline
Department of Mathematics \\ University of Maryland \\ College Park, MD 20742, USA.}
\email[]{\href{trivisa@math.umd.edu}{trivisa@math.umd.edu}}

\date{\today}


\keywords{Superfluids; Navier-Stokes equation; Nonlinear Schr\"odinger equation; Local weak solutions; Existence}

\thanks{P.C.J. was partially supported by the Ann Wylie Fellowship at UMD. Both P.C.J. and K.T. gratefully acknowledge the support of the National Science Foundation under the awards DMS-1614964 and DMS-2008568.}

\maketitle

\begin{abstract}
In \cite{Pitaevskii1959PhenomenologicalPoint}, a micro-scale  model of superfluidity was derived from first principles, to describe the interacting dynamics between the superfluid and normal fluid phases of Helium-4. The model couples two of the most fundamental PDEs in mathematics: the nonlinear Schr\"odinger equation (NLS) and the Navier-Stokes equations (NSE). In this article, we show the local existence of weak solutions to this system (in a smooth bounded domain in 3D), by deriving the required a priori estimates. (We will also establish an energy inequality obeyed by the weak solutions constructed in \cite{Kim1987WeakDensity} for the incompressible, inhomogeneous NSE.) To the best of our knowledge, this is the first rigorous mathematical analysis of a bidirectionally coupled system of the NLS and NSE.
\end{abstract}

\setcounter{tocdepth}{2} 
\tableofcontents

\pagebreak

\section{Introduction}  \label{intro}

Superfluidity is a quantum mechanical phenomenon that is not as well-understood as it is well-known. Upon isobaric cooling at low pressures, helium-4 gas first liquefies before giving rise to a superfluid phase below 2.17K. As the temperature drops, the amount of helium in the superfluid phase increases (and that in the normal fluid phase decreases), until eventually at 0K, we get a pure superfluid phase. Since its experimental discovery \cite{Kapitza1938Viscosity-Point,Allen1938FlowII} over 80 years ago, this phenomenon has evolved into an important sub-field of condensed matter physics research. Despite serious and persistent efforts over several decades by some of the most renowned theoretical physicists, we do not have a unique theory that explains reasonably well all of the observed properties. \medskip

The most striking features of superfluid He-4 are the absence of viscosity and the tendency to flow against a thermal gradient, which can be observed in quite dramatic experiments of ``anti-gravity film flows" and the ``fountain effect" respectively \cite{Vinen:808382,Allen1938New2}. Andronikashvili's experiment \cite{Vinen:808382} (attenuated damping of rotating discs as the surrounding He-4 was cooled) showed evidence of the presence of two fluids, giving credence to Landau's two-fluid model \cite{Landau1941TheoryII}. The latter is a semi-microscopic theory that treats the normal fluid as the excitations of a ground-state superfluid, and notes that the two fluids cannot really be compared to a (classical) multiphase flow where each point in spacetime can be \textit{uniquely} identified with a given phase. Using this model, Landau was able to make some remarkably accurate quantitative estimates (for example, the critical velocity). The success of the two-fluid model led to a search for microscopic theories, based on quantum mechanics; these efforts were spearheaded by Onsager, Feynman, Tisza and London, among others. Onsager \cite{Onsager1953IntroductoryTalk} and Feynman \cite{Feynman1955ApplicationHelium} proposed that the excitations described by Landau are manifested as vortex lines (with quantized circulation) in the superfluid, and this was experimentally confirmed by Bewley et. al. \cite{Bewley2006VisualisationVortices} in 2006. \medskip

The transition from normal He-4 to superfluid He-4 is an example of \textit{order-disorder transitions} \cite{Vinen:808382}. Arguing that this transition is quantum mechanical in nature (given the extremely low temperatures and the absence of a solid phase even at absolute zero), and taking into consideration the bosonic nature of He-4, a reasonable approach was to describe the superfluid phase using a weakly interacting model of Bose-Einstein condensates\footnote{Parallels between superfluidity and superconductivity had been drawn for quite some time: the quantized vortex filaments in the former were analogous to the quantized magnetic flux tubes in the latter, and both phenomena were characterized as order-disorder transitions. Furthermore, following the success of the BCS theory of superconductivity, it became clear that the same explanation (Cooper pairing) can be extended to the superfluidity of the \textit{fermionic} He-3.}. Such an approach led to the Gross-Pitaevskii equation (GPE), also known in the mathematics community as the nonlinear Schr\"odinger equation (or NLS), which soon became the most popular superfluid model. It describes low-energy scattering of the condensate particles (at absolute zero), leading to the well-known cubic nonlinearity. Over the last few decades, the NLS has grown to become one of the most studied PDE models in mathematics. It has been studied for well-posedness in a multitude of scenarios \cite{CollianderWell-posednessEquations}, while also being investigated for scattering solutions \cite{Tao2006NonlinearAnalysis,Dodson2016Global2}. The NLS (including a non-local potential) has also been used to model dipolar quantum gases \cite{Carles2008OnGases,Sohinger2011BoundsEquations}. \medskip

By making a simple transformation of variables, the NLS can be recast as a system of compressible Euler equations (referred to as \textit{quantum hydrodynamics} or QHD\footnote{Interestingly, this formulation is used in David Bohm's \textit{pilot wave theory}, a deterministic yet complicated interpretation of quantum mechanics. This posits that a pilot wave (whose dynamics are governed by QHD) \textit{guides} quantum particles in a classical manner, at odds with other descriptions, like the inherently random Copenhagen interpretation or the fantastical multiverse theories.}) with an additional ``quantum pressure" term \cite{Carles2012MadelungKorteweg} of the form $\rho\nabla\left( \frac{\Delta\sqrt{\rho}}{\sqrt{\rho}} \right)$. This system is a member of the class of Korteweg models and has been extensively studied. Hattori and Li \cite{Hattori1994SolutionsType} established the local well-posedness of the 2D viscous QHD equations for high-regularity data, and upgraded this result to global well-posedness in the case of small data \cite{HattoriHarumi1996GlobalMaterials}. After including an external potential which solves the Poisson equation, the resulting QHD-Poisson system was shown to have local strong solutions \cite{JungelAnsgar2002LocalEquations}. Local unique classical solutions were shown to exist for the same model starting from very regular data in 1D \cite{JungelAnsgar2004QuantumDecay}. Furthermore, this result was made global in time for initial conditions that are sufficiently close to a stationary state, also ensuring the solution's exponential convergence to this stationary state. Wang and Guo \cite{Wang2020AModel} derived a blow-up criterion for strong solutions of the QHD equations, and improved it in \cite{Wang2021AModel}. Meanwhile, there has been a lot of interest in weak solutions to QHD-type models. Antonelli and Marcati \cite{Antonelli2009OnDynamics,Antonelli2012TheDimensions} showed the existence of finite-energy global weak solutions for the QHD-Poisson system, by reverting to the Schr\"odinger formulation. In both these works (among others), a novel fractional step method was used: the NLS was solved and the solution was then periodically updated to account for a collision-induced momentum transfer between constituent particles (macroscopically, a drag force). The irrotationality of the velocity field (except at regions of vacuum) was also implemented to characterize the occurrence of quantum vortices. In \cite{Jungel2010GlobalFluids}, the existence of weak solutions to the viscous QHD system in 2D was proven by J\"ungel (but with test functions that vanish at vacuum). These solutions were global in time if the viscosity was smaller than $\hbar$ (the reduced Planck's constant). The proof involved the use of the Bresch-Desjardin entropy functional \cite{Bresch2004QuelquesKorteweg}, and a redefined velocity to convert the continuity equation into a parabolic type. Vasseur and Yu \cite{Vasseur2016GlobalDamping} improved this result to include more standard test functions, adding some physically-motivated drag terms to gain the required compactness properties for $\sqrt{\rho}$. For the QHD-Poisson system (with linear drag) in $\mathbb{T}^3$, non-uniqueness of the global weak solutions was dealt with in \cite{Donatelli2015Well/IllProblems} using convex integration. The same paper also established weak-strong uniqueness when there are no vacuum zones. \medskip

All of the above discussion on the NLS is valid only at absolute zero. At non-zero (and small) temperatures, as mentioned before, there is a normal fluid as well. This prompts the question of modeling the interactions between the two fluids. There exist models at various length scales: micro-, meso- and macroscopic (see \cite{Jayanti2021GlobalEquations} for a brief introduction, and references therein for more details). The basic idea in these models is to intertwine the dynamics of both the fluids, keeping in mind that they can transfer mass and momentum between themselves. Previously, the authors established global well-posedness of strong solutions in 2D for a macro-scale model of superfluidity known as the HVBK equations, which are a modified version of the Navier-Stokes equations (NSE) \cite{Jayanti2021GlobalEquations}. In this article, we will consider weak solutions for a micro-scale model derived by Pitaevskii \cite{Pitaevskii1959PhenomenologicalPoint} which couples the NLS (for the superfluid) and the NSE (for the normal fluid) via a nonlinear interaction that provides a kind of relaxation mechanism. To the best of our knowledge, this is the first investigation of a model that bidirectionally (see Remark \ref{Antonelli Marcati 2-fluid paper}) couples the NLS and the NSE. At this stage, it is obligatory to comment on the latter. On the one hand, the study of the incompressible limit is arguably the most active area of research in applied mathematics (see \cite{Temam1977Navier-StokesAnalysis,Majda2002VorticityFlow,Robinson2016TheEquations} for classical results). At the other end of the spectrum are compressible flows (a little more realistic in some scenarios), which have also been subjected to intense scrutiny in mathematical literature \cite{Feireisl2004DynamicsFluids,Lions1996MathematicalMechanicsb}. In this work, we will occupy a middle ground between the two extremes: an inhomogeneous, incompressible flow, which consists of the compressible NSE appended with the ``divergence-free velocity" condition. In 3D, the existence of (local) weak solutions when the initial density is bounded below was first established by Kazhikov \cite{Kazhikov1974Fluid}, and this was extended to allow for vacuum (regions of zero density) by Kim \cite{Kim1987WeakDensity}. Further improvements were made by Simon \cite{Simon1990NonhomogeneousPressure}, by analyzing the weak and strong continuity at $t=0$, and also proving global weak solutions in a larger function space (similar to Kazhikov) than the previous works. In the case of strong solutions, when the density is bounded below, Ladyzhenskaya and Solonnikov \cite{Ladyzhenskaya1978UniqueFluids} investigated local (global, respectively) unique solvability in 3D (in 2D, respectively) and global uniqueness for small data. Using a compatibility criterion on the initial data, Choe and Kim \cite{Choe2003StrongFluids} showed local existence of a unique strong solution when the density is not bounded below. More recently, Boldrini et al \cite{Boldrini2003Semi-GalerkinFluids} proved the local existence of a unique strong solution for a model of inhomogeneous, incompressible and asymmetric flow (with density bounded below); this was also extended to a global solution for sufficiently small data. \medskip

This work is most closely related to that of \cite{Kim1987WeakDensity} in that we use a similar approach while deriving the a priori estimates for the NSE. However, we work with a density field that is bounded below (positively) and not governed by a simple, homogeneous transport equation. The inhomogeneity is a relaxation mechanism that allows for mass and momentum transfer between the two fluids, as will be seen later on. The presence of a source term that is not non-negative almost-everywhere forces us to account for the unphysical possibility of the density becoming negative in a set of positive measure. To avoid this, we must accordingly limit our existence time. In a departure from \cite{Kim1987WeakDensity}, we also need a bound on $\norm{\partial_t u}_{L^2_{t,x}}$, obtained from the lower bound on $\rho$ and the estimate on $\norm{\sqrt{\rho}\partial_t u}_{L^2_{t,x}}$. This problem was recognized and addressed using higher order a priori estimates based on more regular data, and necessitates the stopping of the evolution of the system before the density reaches zero somewhere in the domain. Furthermore, as a consequence of the nature of the coupling between the two fluids, we also begin from data that is more regular than in \cite{Kim1987WeakDensity} (but less regular than in \cite{Choe2003StrongFluids}), so that we may get an $L^{\infty}_{t,x}$ bound on the normal fluid velocity. The analysis also entails the use of higher-order boundary conditions on the velocity and the wavefunction. In turn, these dictate the choice of basis functions used in constructing the approximations in the semi-Galerkin scheme. We will now discuss the notation used in the article, before describing the model and stating the results. \medskip

\begin{rem}\label{Antonelli Marcati 2-fluid paper}
    After the preparation of this manuscript, it was pointed out to us by Pierangelo Marcati that a coupled 2-fluid model was already used in \cite{AntonelliPaolo2015FiniteSuperfluidity} to analyze superfluidity. We would like to highlight some key differences in the models which result in a significant departure in the approaches used and ultimately, the results. In their work, the authors do not permit any mass transfer between the two fluids, which allows for global-in-time solutions. Moreover, the momentum transfer is unidirectional and linear, affecting only the superfluid phase (as opposed to the bidirectional and nonlinear nature of the coupling in this work). Finally, we consider the problem on a smooth bounded domain and require certain higher-order boundary conditions, while in \cite{AntonelliPaolo2015FiniteSuperfluidity} the problem is set in $\R^3$.
\end{rem} \medskip

\subsection{Notation} \label{notation}
Let $\mathfrak{D}(\Omega)$ be the space of smooth, compactly-supported functions on $\Omega$. Then, $H^s_0(\Omega)$ is the completion of $\mathfrak{D}$ under the Sobolev norm $H^s$. The more general Sobolev spaces are denoted by $W^{s,p}(\Omega)$, where $s\in\R$ is the derivative index and $1\le p\le\infty$ is the integrability index. A dot on top, like $\Dot{H}^s(\Omega)$ or $\Dot{W}^{s,p}$, is used when referring to the homogeneous Sobolev spaces. \medskip

Consider a 3D vector-valued function $u\equiv (u_1,u_2,u_3)$, where $u_i\in \mathfrak{D}(\Omega), i=1,2,3$. The collection of all divergence-free functions $u$ defines $\mathfrak{D}_d(\Omega)$. Then, $H^s_d(\Omega)$ is the completion of $\mathfrak{D}_d(\Omega)$ under the $H^s$ norm. In addition, to say that a complex-valued wavefunction $\psi\in H^s(\Omega)$ means that its real and imaginary parts are the limits (in the $H^s$ norm) of functions in $\mathfrak{D}(\Omega)$. \medskip




For $s\in\R$, $s^-$ is defined to be the set $\{q\in\R \ \lvert \ q<s \}$. For instance, $H^{2^-}$ denotes all Sobolev spaces $H^s$ for $s<2$. Also, the indices may also be specified as a range. Example: $L^{[1,6)}(\Omega) := \{L^p(\Omega)\ | \ 1\le p<6\}$ and $H^{[0,2)}(\Omega) := \{H^s(\Omega)\ | \ 0\le s<2 \}$. \medskip

The $L^2$ inner product, denoted by $\langle \cdot,\cdot \rangle$, is sesquilinear (the first argument is complex conjugated, indicated by an overbar) to accommodate the complex nature of the Schr\"odinger equation. Thus, for example, $\langle \psi,B\psi \rangle = \int_{\Omega} \Bar{\psi}B\psi \ dx$. Needless to say, since the velocity and density are real-valued functions, we will ignore the complex conjugation when they constitute the first argument of the inner product. \medskip

We use the subscript $x$ on a Banach space to denote the Banach space is defined over $\Omega$. For instance, $L^p_x$ stands for the Lebesgue space $L^p(\Omega)$, and similarly for the Sobolev spaces: $H^s_{d,x} := H^s_d(\Omega)$. For spaces/norms over time, the subscript $t$ will denote the time interval in consideration, such as $L^p_t := L^p_{[0,T]}$, where $T$ stands for the local existence time unless mentioned otherwise. The Bochner spaces $L^p(0,T;X)$ and $C([0,T];X)$ have their usual meanings, as ($L^p$ and continuous, respectively) maps from $[0,T]$ to a Banach space $X$. The notation $C_w([0,T];X)$ is the space of weakly continuous functions over $X$, i.e., the set of all $f\in L^{\infty}(0,T;X)$ such that the map $t\mapsto \langle g,f(t) \rangle_{X'\times X}$ is continuous for all $g\in X'$ (the dual of $X$). \medskip

We also use the notation $X\lesssim Y$ to imply that there exists a positive constant $C$ such that $X\le CY$. The dependence of the constant on various parameters (including the initial data), will be denoted using a subscript as $X\lesssim_{k_1,k_2} Y$ or $X\le C_{k_1,k_2}Y$. \bigskip

\subsection{Organization of the paper}

In Section \ref{mathematical model}, we present and discuss the mathematical model, along with statements of the main results. Several a priori estimates are derived in Section \ref{a priori estimates}. The proofs of the local existence of weak solutions, and that of the energy equality, constitute Section \ref{local existence proof}. Finally, in Section \ref{energy inequality proof - Kim}, we establish the energy inequality for the weak solutions constructed in \cite{Kim1987WeakDensity}, which have lower regularity than the ones in this work. \bigskip

\section{Mathematical model and main results} \label{mathematical model}
The superfluid phase is described by a complex wavefunction, whose dynamics are governed by the nonlinear Schr\"odinger equation (NLS), while the normal fluid is modeled using the compressible Navier-Stokes equations (NSE). The full set of equations, in all generality, can be found in Section 2 of \cite{Pitaevskii1959PhenomenologicalPoint}. In what follows, we use a slightly simplified version of the equations, arrived at by making the following assumptions. \medskip

\begin{enumerate}
    \item We consider the commonly used cubic nonlinearity for the NLS. This is done by choosing the internal energy of the system to be $\frac{\mu}{2}\lvert \psi \rvert^4$. We also assume the internal energy is independent of the density of the normal fluid. \medskip
    
    \item We work in the limit of a divergence-free normal fluid velocity. This means that the pressure is a Lagrange multiplier, and renders the equations of state and entropy unnecessary. However, due to the nature of the coupling between the two phases, the density of the normal fluid is not constant. \medskip
    
    \item Planck's constant $(\hbar)$ and mass of the Helium atom $(m)$ have both been set to unity for simplicity. \medskip
    
    \item For the boundary conditions of the velocity and the wavefunction, apart from the fields being zero on the boundary of the domain, we also need vanishing derivatives (up to a certain order). This requirement is purely mathematical in nature, and stems from the nature of the higher-order a priori estimates. This will be more clear once the estimates are derived. \medskip
\end{enumerate}

We now state the equations used in this article.

\begin{align}
    \partial_t \psi + \Lambda B\psi &= -\frac{1}{2i}\Delta\psi + \frac{\mu}{i}\lvert\psi\rvert^2 \psi \tag{NLS} \label{NLS} \\
    B = \frac{1}{2}\left(-i\nabla - u \right)^2 + \mu \lvert \psi \rvert^2 &= -\frac{1}{2}\Delta + iu\cdot\nabla + \frac{1}{2}\lvert u \rvert^2 + \mu \lvert \psi \rvert^2 \tag{CPL} \label{coupling} \\
    \partial_t \rho + \nabla\cdot(\rho u) &= 2\Lambda\Re(\Bar{\psi}B\psi) \tag{CON} \label{continuity} \\
    \partial_t (\rho u) + \nabla\cdot (\rho u \otimes u) + \nabla p - \nu \Delta u &= \!\begin{multlined}[t]
    -2\Lambda \Im(\nabla \Bar{\psi}B\psi) + \Lambda\nabla \Im(\Bar{\psi}B\psi) \tag{NSE} \label{NSE} + \frac{\mu}{2}\nabla\lvert\psi\rvert^4 
    \end{multlined} \\
    \nabla\cdot u &= 0 \tag{DIV} \label{divergence-free}
\end{align}

\medskip

These equations are supplemented with the required initial and boundary conditions\footnote{For a justification of the exclusion of $t=0$ in the boundary conditions for the wavefunction, see Remark \ref{initial data violating boundary conditions}.} on the wavefunction, velocity and density.

\begin{equation*} \tag{INI} \label{initial conditions}
        \psi(0,x) = \psi_0(x) \qquad u(0,x) = u_0(x) \qquad \rho(0,x) = \rho_0(x) \quad a.e. \ x\in\Omega
\end{equation*}

\medskip

\begin{equation} \tag{BC} \label{boundary conditions}
    \begin{aligned}
    u = \frac{\partial u}{\partial n} = 0 \quad &a.e. \ (t,x)\in[0,T]\times\partial\Omega \\
    \psi = \frac{\partial \psi}{\partial n} = \frac{\partial^2 \psi}{\partial n^2} = \frac{\partial^3 \psi}{\partial n^3} = 0 \quad &a.e. \ (t,x)\in(0,T]\times\partial\Omega \notag
    \end{aligned}
\end{equation}

\medskip

\noindent where $n$ is the outward normal direction on the boundary, and $T$ is the local existence time.

\medskip

Here, $\psi$ is the wavefunction describing the superfluid phase, while $\rho$, $u$ and $p$ are the density, velocity and pressure (respectively) of the normal fluid. The normal fluid has viscosity $\nu$, and $\mu$ (positive constant) is the strength of the dipole-dipole scattering interactions within the superfluid\footnote{$\mu>0$ (resp. $\mu<0$) is called the defocusing (resp. focusing) NLS.}. Finally, $\Lambda$ is a positive constant that indicates the strength of the coupling between the two phases. The coupling is itself denoted by the nonlinear operator $B$. \medskip

According to the Schr\"odinger equation, the wavefunction's evolution in time is generated by the Hamiltonian (roughly, the energy) of the system. Indeed, the coupling term $B$ is seen to have the structure of relative kinetic energy\footnote{There is also the cubic nonlinearity term, which is to say that the relaxation to equilibrium also depends on the potential energy of the superfluid.} between the two phases. This is perhaps made clear by recalling that the quantum mechanical momentum operator (in the position basis) is $-i\hbar\nabla$. Since the mass has been set to unity, this is also the velocity of the superfluid phase. The purpose of this coupling is to allow for momentum/energy transfer between the two phases as a means of relaxation or dissipation. \medskip

Having stated the model, the notion of weak solutions to \eqref{NLS}, \eqref{NSE}, \eqref{continuity} and \eqref{divergence-free} [with initial conditions \eqref{initial conditions} and boundary conditions \eqref{boundary conditions}], henceforth referred to as the ``\textbf{Pitaevskii model}", is as follows. \bigskip

\begin{mydef}[Weak solutions] \label{definition of weak solutions}
    Let $\Omega \subset \R^3$ be a bounded set with a smooth boundary $\partial\Omega$. For a given time $T>0$, consider the following test functions:
    \begin{enumerate}
        \item a complex-valued scalar field $\varphi \in H^1(0,T;L^2(\Omega))\cap L^2(0,T;H_0^1(\Omega))$,
        
        \item a real-valued, divergence-free (3D) vector field $\Phi \in H^1(0,T;L^2_d(\Omega)) \cap L^2(0,T;H^1_d(\Omega))$, and
        
        \item a real-valued scalar field $\sigma \in H^1(0,T;L^2(\Omega))\cap L^2(0,T;H^1(\Omega))$.
    \end{enumerate}
    
    A triplet $(\psi,u,\rho)$ is called a \textbf{weak solution} to the Pitaevskii model if:
    
    \begin{enumerate} [(i)]
        \item 
        
        \begin{equation}
        \begin{gathered}
            \psi\in L^2(0,T;H^{\frac{7}{2}+\delta}_0(\Omega)) \\
            u\in L^2(0,T;H^{\frac{3}{2}+\delta}_d(\Omega)) \\
            \rho \in L^{\infty}([0,T]\times\Omega)
        \end{gathered}
        \end{equation}\medskip
        
        \item and they satisfy the governing equations in the sense of distributions for all test functions, i.e.,
        
        \begin{multline} \label{weak solution wavefunction}
            -\int_0^T \int_{\Omega} \left[ \psi\partial_t\Bar{\varphi} + \frac{1}{2i}\nabla\psi\cdot\nabla\Bar{\varphi} - \Lambda\Bar{\varphi}B\psi - i\mu\Bar{\varphi}\lvert \psi \rvert^2\psi \right] dx \ dt \\ = \int_{\Omega} \left[ \psi_0\Bar{\varphi}(t=0) - \psi(T)\Bar{\varphi}(T) \right] dx
        \end{multline}
    
        \begin{multline} \label{weak solution velocity}
            -\int_0^T \int_{\Omega} \left[ \rho u\cdot \partial_t \Phi + \rho u\otimes u:\nabla\Phi - \nu\nabla u:\nabla\Phi - 2\Lambda\Phi\cdot\Im(\nabla\Bar{\psi}B\psi) \right] dx \ dt \\ = \int_{\Omega} \left[ \rho_0 u_0 \Phi(t=0) - \rho(T)u(T)\Phi(T) \right] dx
        \end{multline}
    
        \begin{equation} \label{weak solution density}
            -\int_0^T \int_{\Omega} \left[ \rho \partial_t \sigma + \rho u\cdot\nabla\sigma + 2\Lambda \sigma \Re(\Bar{\psi}B\psi) \right] dx \ dt = \int_{\Omega} \left[ \rho_0 \sigma(t=0) - \rho(T)\sigma(T) \right] dx
        \end{equation}
    
        \noindent where (the initial data) $\psi_0 \in H^{\frac{5}{2}+\delta}_0(\Omega)$, $u_0 \in H^{\frac{3}{2}+\delta}_d(\Omega)$ and $\rho_0 \in L^{\infty}(\Omega)$.
    \end{enumerate}
    
\end{mydef}

\medskip

\begin{rem} \label{gradient terms in NSE}
    We note that the last two terms in \eqref{NSE} are gradients, just like the pressure term, and thus, vanish in the definition of the weak solution (since the test function is divergence-free). Henceforth, we will absorb these two gradient terms into a modified pressure, denoted by $\Tilde{p}$ wherever necessary.
\end{rem}

\medskip

The operator $B$ is seen from \eqref{coupling} to be a second-order elliptic operator, with time-dependent coefficients. This causes a few problems:

\begin{enumerate}
    \item Even though $B$ is non-negative (shown in Lemma \ref{coupling symmetric and non-negative}), and dissipative in nature, the eigenfunctions of its linear part cannot be used as a basis for the semi-Galerkin scheme employed here. This is because the eigenvalues and eigenfunctions depend on time, requiring severe assumptions on their time-regularity. Moreover, the linear part of $B$ does not have a spectral gap at $0$: its eigenvalues are not known to be bounded away from zero. \medskip
    
    \item We also do not expand $B$ into its constituent terms (to transfer some derivatives to the test functions, for instance), because as will be shown later on, the a priori estimates contain dissipative terms like $\lVert B\psi \rVert_{L^2(0,T;H^s(\Omega))}$. Thus, separating $B$ will make it impossible to make use of the energy structure of the model. \bigskip
\end{enumerate}

We are now ready to state our main results.

\begin{thm} [Local existence] \label{local existence}
    For any $\delta \in (0,\frac{1}{2})$, let $\psi_0 \in H^{\frac{5}{2}+\delta}_0(\Omega)$ and $u_0\in H^{\frac{3}{2}+\delta}_d(\Omega)$. Suppose $\rho_0$ is bounded both above and below a.e. in $\Omega$, i.e., $0<m\le\rho_0\le M<\infty$. Then, there exists a local existence time $T$ and at least one weak solution $(\psi,u,\rho)$ to the Pitaevskii model. In particular, the weak solutions have the following regularity:
    
    \begin{gather}
        \psi\in C(0,T;H^{\frac{5}{2}+\delta}_0(\Omega)) \cap L^2(0,T;H^{\frac{7}{2}+\delta}_0(\Omega)) \label{weak solution psi regularity} \\
        u\in C(0,T;H^{\frac{3}{2}+\delta}_d(\Omega)) \cap L^2(0,T;H^{2}_d(\Omega)) \label{weak solution u regularity} \\
        \rho\in L^{\infty}([0,T]\times \Omega)\cap C(0,T;L^2(\Omega)) \label{weak solution rho regularity}
    \end{gather}\medskip
    
    \noindent where $T$ depends on $\varepsilon\in (0,m)$, the allowed infimum of the density field (see Definition \ref{local existence time definition} below). In addition, the weak solutions (not necessarily unique) also satisfy the following energy equality:
    
    \begin{equation} \label{energy equality for weak solutions}
    \begin{multlined}
        \left( \frac{1}{2}\norm{\sqrt{\rho}u}_{L^2_x}^2 + \frac{1}{2}\norm{\nabla\psi}_{L^2_x}^2 + \frac{\mu}{2}\norm{\psi}_{L^4_x}^4 \right)(t) + \nu\norm{\nabla u}_{L^2_{[0,t]}L^2_x}^2 + 2\Lambda\norm{B\psi}_{L^2_{[0,t]}L^2_x}^2 \\ = \frac{1}{2}\norm{\sqrt{\rho_0}u_0}_{L^2_x}^2 + \frac{1}{2}\norm{\nabla\psi_0}_{L^2_x}^2 + \frac{\mu}{2}\norm{\psi_0}_{L^4_x}^4 \quad a.e. \ t\in [0,T]
    \end{multlined}
    \end{equation}\medskip
    
\end{thm}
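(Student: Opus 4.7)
The plan is to construct weak solutions via a semi-Galerkin scheme with an inner fixed-point iteration, following \cite{Kim1987WeakDensity} but adapted to the bidirectional, nonlinear coupling through $B$. Since the linear part of $B$ has time-dependent coefficients and no spectral gap at zero, I will not diagonalize it; instead I fix a smooth divergence-free basis $\{w_k\}_{k\in\N}$ of $H^{3/2+\delta}_d(\Omega)$ whose elements satisfy $w_k=\partial_n w_k=0$ (for instance Stokes eigenfunctions with clamped boundary data, compatible with \eqref{boundary conditions}), set $V_N=\mathrm{span}(w_1,\dots,w_N)$, and seek $(\psi^N,u^N,\rho^N)$ with $u^N(t)\in V_N$ but $\psi^N$ and $\rho^N$ unrestricted. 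The inner iteration, for each $N$, is: given $v\in C([0,T];V_N)$, (i) solve the Schr\"odinger equation (linear in $\psi$) whose coefficients are built from $v$ to produce $\psi$; (ii) solve the continuity equation $\partial_t\rho+v\cdot\nabla\rho=2\Lambda\Re(\Bar\psi B\psi)$ along characteristics, using $\nabla\cdot v=0$; (iii) feed $(\psi,\rho)$ into the momentum equation projected onto $V_N$, whose coefficient vector solves a finite-dimensional ODE, producing an updated $\Tilde v\in V_N$. A Banach contraction of $v\mapsto\Tilde v$ on a short time interval yields $u^N$.

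The next step is to upgrade the formal a priori estimates of Section \ref{a priori estimates} to uniform-in-$N$ bounds in the classes \eqref{weak solution psi regularity}--\eqref{weak solution rho regularity}. Two features drive the mechanics: the estimate on $\norm{\partial_t u^N}_{L^2_{t,x}}$ is obtained from $\norm{\sqrt{\rho^N}\partial_t u^N}_{L^2_{t,x}}$ by dividing through, so it requires $\rho^N\ge\varepsilon$ maintained uniformly in $N$, which is the role of Definition \ref{local existence time definition} in pinning down $T=T(\varepsilon,m,M,\psi_0,u_0)$; and the higher-regularity bounds on $\psi^N$ come from iterating energy estimates involving $\norm{B\psi}_{L^2_t H^s_x}$ without ever expanding $B$, which is why the operator must remain intact. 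Aubin--Lions compactness then upgrades weak to strong convergence of $\psi^N,u^N$ in suitable Sobolev norms, which suffices to pass to the limit in every nonlinearity, including $\rho u\otimes u$, $\lvert\psi\rvert^2\psi$, $u\cdot\nabla\psi$, and $\Bar\psi B\psi$, while $\rho^N\to\rho$ strongly in $C([0,T];L^2_x)$ follows from its $L^\infty$ bound combined with an estimate on $\partial_t\rho^N$ in a negative Sobolev space.

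For the energy equality \eqref{energy equality for weak solutions}, I test \eqref{NLS} against a combination producing the quantum energy, using the decomposition $-\Delta\psi+2\mu\lvert\psi\rvert^2\psi=2B\psi-2iu\cdot\nabla\psi-\lvert u\rvert^2\psi$ read off from \eqref{coupling}; test \eqref{NSE} against $u$; and use \eqref{continuity} to handle $\tfrac{1}{2}\partial_t(\rho\lvert u\rvert^2)$. The convective, pressure, and last two gradient terms of \eqref{NSE} drop out against the divergence-free $u$, and the $\Lambda$-contributions from the three equations recombine to yield precisely the dissipation $2\Lambda\norm{B\psi}_{L^2_x}^2$. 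The regularity stated in \eqref{weak solution psi regularity}--\eqref{weak solution u regularity} is chosen so that these pairings are rigorous pointwise in $t$: from \eqref{NLS} one reads off $\partial_t\psi\in L^2_t H^{3/2+\delta}_x$, which pairs with $B\psi\in L^2_t H^{3/2+\delta}_x$, and $\partial_t u\in L^2_t L^2_x$ pairs with $u\in L^\infty_t H^{3/2+\delta}_x$. The identity then passes from the Galerkin level via the strong convergences above.

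The hardest step is maintaining $\rho^N\ge\varepsilon$ through the iteration. The source $2\Lambda\Re(\Bar\psi B\psi)$ in the continuity equation is sign-indefinite, so preserving positivity of $\rho^N$ along characteristics requires an $L^\infty_t L^\infty_x$ bound on this source. That is the reason the initial data sit in $H^{5/2+\delta}_0\times H^{3/2+\delta}_d$ (strictly above the Kim regularity): the NLS then propagates $\psi$ into $L^\infty_t H^{5/2+\delta}_x\cap L^2_t H^{7/2+\delta}_x$, leaving enough Sobolev embedding room in $3$D to dominate $\Bar\psi B\psi$ in $L^\infty_{t,x}$. Quantifying how long this pointwise bound keeps $\rho^N$ uniformly above $\varepsilon$ is exactly what fixes $T$ and forces the local (rather than global) character of the result.
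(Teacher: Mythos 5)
Your overall skeleton (semi-Galerkin in the velocity, characteristics for the density, stacked a priori estimates that keep $B$ intact, Aubin--Lions compactness, limit passage, and an energy identity) matches the paper's strategy, but two steps as you describe them would not go through. First, in your inner iteration you leave $\psi$ un-truncated and claim that, for frozen $v$, the Schr\"odinger step is ``linear in $\psi$''; it is not (the cubic term and the $\mu\lvert\psi\rvert^2$ inside $B$ remain), and even granting a frozen-coefficient linearization you would still need a well-posedness theory for the NLS on a bounded domain subject to the higher-order boundary conditions $\psi=\partial_n\psi=\partial_n^2\psi=\partial_n^3\psi=0$, which you never supply. The paper's device for exactly this point is to Galerkin-truncate the wavefunction as well, using eigenfunctions of the Dirichlet penta-Laplacian: this reduces the NLS step to a finite ODE system (solved together with the velocity coefficients by a single fixed-point iteration, with the density generated by Lemmas \ref{existence of solutions to approx continuity equation}--\ref{convergence of solutions to approx continuity equation}), enforces the vanishing normal derivatives so that every integration by parts in the higher-order estimates is boundary-term free, and guarantees $\psi^N\in H^{\frac{9}{2}+\delta}_0$, i.e.\ $B\psi^N\in D\left((-\Delta)^s\right)$, which is what legitimizes the abstract integration by parts (Lemma \ref{fractional powers self-adjoint property}) in the highest-order estimate of Section \ref{the highest-order a priori estimate for psi}. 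Without a substitute for this, your construction step is a genuine gap.

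Second, your mechanism for keeping $\rho^N\ge\varepsilon$ asks for an $L^\infty_{t,x}$ bound on $\Bar{\psi}B\psi$, and you claim the data regularity leaves ``enough Sobolev room'' for it. It does not: the a priori estimates give $\psi\in L^\infty_t H^{\frac{5}{2}+\delta}_x\cap L^2_t H^{\frac{7}{2}+\delta}_x$, hence only $B\psi\in L^\infty_t H^{\frac{1}{2}+\delta}_x\cap L^2_t H^{\frac{3}{2}+\delta}_x$, and $H^{\frac{1}{2}+\delta}(\Omega)$ does not embed into $L^\infty$ in 3D, so no uniform-in-time $L^\infty_x$ control of $B\psi$ is available. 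Fortunately none is needed: along characteristics one only has to control $\int_0^T\lvert\Bar{\psi}B\psi\rvert\,d\tau$, and the sufficient condition actually used is $2\Lambda T^{\frac{1}{2}}\norm{\psi}_{L^\infty_{t,x}}\norm{B\psi}_{L^2_t L^\infty_x}<m-\varepsilon$, with $\norm{B\psi}_{L^2_t L^\infty_x}\lesssim\norm{B\psi}_{L^2_t H^{\frac{3}{2}+\delta}_x}$ supplied by the highest-order estimate \eqref{Bpsi sought-after bound}; replace your pointwise-in-time bound by the time-integrated criterion \eqref{constraint to choose existence time}. (Your route to the energy equality, testing the limit equations directly, can plausibly be made rigorous with the stated regularity, but the paper instead passes to the limit in the Galerkin-level equality \eqref{energy equality for smooth approximations} using the strong convergences, notably $B^N\psi^N\to B\psi$ in $C^0_t L^2_x$, which avoids justifying chain rules for the weak solution itself.)
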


The proof of Theorem \ref{local existence} will utilize a semi-Galerkin scheme and the Aubin-Lions-Simon lemma for the required compactness argument. The approach is motivated by that of \cite{Kim1987WeakDensity}, but we begin with more regular data. This is because the presence of $u$ in the nonlinear coupling means we are required to control it in $L^{\infty}(\Omega)$ to prevent the formation of vacuum (and even regions of negative density), as opposed to $H^1(\Omega)$ in \cite{Kim1987WeakDensity}. The local existence time will be determined by fixing a positive lower bound on the density. Several a priori estimates for the Schr\"odinger equation will be established sequentially, starting from the standard mass and energy estimates to those of higher orders. \medskip

\begin{rem} \label{initial data violating boundary conditions}
    While the boundary conditions for the wavefunction include a vanishing third derivative, one may observe that the initial condition only belongs to $H^{\frac{5}{2}+\delta}_0$. This means that the regularity of the initial condition can only ensure a vanishing second derivative on the boundary, and is the reason for not including $t=0$ in the boundary conditions. Of course, the boundary conditions are enforced for $t>0$ by using an appropriate eigenfunction expansion for the semi-Galerkin scheme.
\end{rem} \medskip

\begin{rem} \label{uniqueness in another article}
    In an accompanying article \cite{Jayanti2021UniquenessSuperfluidity}, we also address the uniqueness of the above weak solutions. We demonstrate weak-strong uniqueness, i.e., starting from the same data, if there is a weak solution and a strong solution, then they are identical. We also establish ``weak-moderate" uniqueness when the stronger solution has a regularity intermediate to the weak and strong solutions, provided the data and the existence time (and/or the initial energy) are small enough.
\end{rem} \medskip

As is the case with weak solutions in general, the energy estimate derived for the smooth approximations holds as an inequality when we pass to the limit, due to lower semi-continuity of the norms and weak convergences. Owing to the regularity of the initial data, we can actually obtain an equality. In the case of the (less-regular) weak solutions obtained in \cite{Kim1987WeakDensity}, we will briefly explore the energy \textit{inequality}, something that was not discussed in the original work. \medskip

\begin{prop} [Energy inequality for weak solutions of incompressible, inhomogeneous fluids] \label{energy inequality - Kim}
    Consider an incompressible, inhomogeneous fluid in a smooth and bounded domain $\Omega\subset\R^3$.\medskip
    
    \begin{equation}
        \begin{gathered}
            \partial_t (\rho u) + \nabla\cdot (\rho u \otimes u) + \nabla p - \nu \Delta u = 0 \\
            \partial_t \rho + \nabla\cdot(\rho u) = 0 \\
            \nabla\cdot u = 0 \\
            \\
            u(t,\partial\Omega) = 0 \quad a.e. \ t>0 \\
            u(0,x) = u_0(x) \quad , \quad \rho(0,x) = \rho_0(x)
        \end{gathered}
    \end{equation}\medskip
    
    In \cite{Kim1987WeakDensity}, local weak solutions were constructed starting from initial data $u_0 \in H^1_d(\Omega)$, $\rho_0 \in L^{\infty}(\Omega)$ and $0\le \rho_0(x) \le M<\infty$ a.e. in $\Omega$. More precisely, it was shown that $u\in L^{\infty}(0,T;H^1_d (\Omega))\cap L^2(0,T; H^2(\Omega))$ and $\rho \in L^{\infty}(0,T\times\Omega)$, for a time $T$ that depends only on the norm of the initial velocity and the size of the domain. \medskip

    It also holds that these solutions satisfy an energy inequality, i.e., for a.e. $t\in [0,T]$, \medskip
    
    \begin{equation} \label{energy inequation - Kim}
        \frac{1}{2}\norm{\sqrt{\rho}u}_{L^2_x}^2 (t) + \nu\norm{\nabla u}_{L^2_{[0,t]}L^2_x}^2 \le \frac{1}{2}\norm{\sqrt{\rho_0}u_0}_{L^2_x}^2 \quad a.e. \ t\in [0,T]
    \end{equation}\medskip
\end{prop}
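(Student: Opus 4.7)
The strategy is to obtain the inequality as the limit of an energy \emph{equality} satisfied by the semi-Galerkin approximations used in \cite{Kim1987WeakDensity}, exploiting weak lower semi-continuity of the relevant norms when passing to the limit. I would avoid trying to test the limiting momentum equation directly with $u$ itself, because the regularity $u\in L^\infty(0,T;H^1_d)\cap L^2(0,T;H^2)$ is not by itself sufficient to justify the time-derivative manipulations without extra estimates on $\partial_t(\rho u)$ that are not at hand.

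\medskip

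First, revisit the approximation $(\rho_n,u_n)$ constructed in \cite{Kim1987WeakDensity}. Each $u_n(t,\cdot)$ lives in the finite-dimensional span of the Stokes eigenfunctions and is therefore an admissible test function in the approximate momentum equation. Testing with $u_n$ itself and invoking the approximate continuity equation $\partial_t\rho_n+\nabla\cdot(\rho_n u_n)=0$ to reorganize the combination of time-derivative and convective terms, the pressure being killed by $\nabla\cdot u_n=0$ and the boundary condition, yields
\begin{equation*}
\frac{1}{2}\norm{\sqrt{\rho_n}u_n}_{L^2_x}^2(t) + \nu\norm{\nabla u_n}_{L^2_{[0,t]}L^2_x}^2 \;=\; \frac{1}{2}\norm{\sqrt{\rho_n(0)}u_n(0)}_{L^2_x}^2
\end{equation*}
for every $t\in[0,T]$. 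The key algebraic cancellation is that $\tfrac12\int\nabla\cdot(\rho_n u_n)\,|u_n|^2$ (from the convective term) and $\tfrac12\int\partial_t\rho_n\,|u_n|^2$ (from reorganizing $\partial_t(\rho_n u_n)\cdot u_n$) annihilate via the continuity equation.

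\medskip

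Next, pass to the limit $n\to\infty$. The a priori estimates in \cite{Kim1987WeakDensity} supply $u_n\rightharpoonup^* u$ in $L^\infty(0,T;H^1_d)$, $u_n\rightharpoonup u$ in $L^2(0,T;H^2)$, and (via transport/renormalization arguments for $\rho_n$) $\rho_n\to\rho$ strongly in $C([0,T];L^p_x)$ for every $p\in[1,\infty)$, whence $\sqrt{\rho_n}\to\sqrt{\rho}$ in $C([0,T];L^{2p}_x)$. On the right-hand side, $u_n(0)=P_n u_0\to u_0$ in $H^1_d$, so $\sqrt{\rho_n(0)}u_n(0)\to\sqrt{\rho_0}u_0$ strongly in $L^2_x$, giving true convergence on the RHS. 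On the left, weak $L^2_{t,x}$ convergence of $\nabla u_n$ and lower semi-continuity of the norm yield $\norm{\nabla u}_{L^2_{[0,t]}L^2_x}^2\le\liminf_n\norm{\nabla u_n}_{L^2_{[0,t]}L^2_x}^2$. For the kinetic energy at time $t$, combining strong $L^{2p}_x$ convergence of $\sqrt{\rho_n(t)}$ with the uniform $H^1_x\hookrightarrow L^6_x$ bound on $u_n(t)$ identifies the weak limit $\sqrt{\rho_n(t)}u_n(t)\rightharpoonup \sqrt{\rho(t)}u(t)$ in $L^2_x$; lower semi-continuity then gives the remaining inequality for the kinetic energy.

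\medskip

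The principal obstacle is securing the strong convergence of the densities $\rho_n\to\rho$ that underlies the identification of the kinetic-energy limit. Because the density may vanish and its evolution is pure transport driven by a velocity only of $H^1$-in-space regularity, the DiPerna--Lions renormalization framework (or the direct arguments embedded in Kim's existence proof, which rely on the enhanced $L^2(0,T;H^2)$ bound on $u_n$) must be invoked to upgrade weak compactness of $\{\rho_n\}$ to strong compactness in $C([0,T];L^p_x)$. Once that is in hand, the remaining passage to the limit is routine and \eqref{energy inequation - Kim} follows for a.e.\ $t\in[0,T]$.
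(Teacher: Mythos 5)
Your overall strategy (energy equality for the semi-Galerkin approximations, then lower semi-continuity in the limit) is the same as the paper's, and your treatment of the initial energy and of the dissipation term matches the paper's. The gap is in the kinetic-energy term at fixed time. You claim that strong convergence of $\sqrt{\rho_n(t)}$ plus the uniform $H^1_x\hookrightarrow L^6_x$ bound on $u_n(t)$ ``identifies the weak limit $\sqrt{\rho_n(t)}u_n(t)\rightharpoonup \sqrt{\rho(t)}u(t)$ in $L^2_x$.'' This identification requires knowing that $u_n(t)\rightharpoonup u(t)$ at (a.e.) fixed $t$, and that does not follow from $u_n\rightharpoonup^* u$ in $L^{\infty}(0,T;H^1_d)$ or $u_n\rightharpoonup u$ in $L^2(0,T;H^2)$, which only give time-averaged information. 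The standard way to get pointwise-in-time (weak) convergence is a uniform bound on $\partial_t u_n$, and precisely this is unavailable in Kim's setting: since the density is not bounded below, only $\sqrt{\rho_n}\,\partial_t u_n\in L^2_{t,x}$ and $\partial_t(\rho_n u_n)$ in a negative-order Sobolev space are controlled. (This is exactly the obstruction the paper emphasizes; it is also why only an inequality, not an equality, survives the limit.) Your appeal to strong convergence of $\rho_n$ in $C([0,T];L^p_x)$ — which is plausible but stronger than what Kim's construction provides and than what is needed — does not repair this, because the missing pointwise-in-time control is on $u_n$, not on $\rho_n$.

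The paper circumvents the issue without any pointwise-in-time identification of $u^N$: it first shows convergence of the time-integrated kinetic energy, writing $\int_0^T\!\!\int_\Omega(\rho^N|u^N|^2-\rho|u|^2)$ as $\int\!\!\int(\rho^Nu^N-\rho u)\cdot u^N+\int\!\!\int \rho u\cdot(u^N-u)$ and using the strong $L^2_tH^{-1}_x$ convergence of $\rho^Nu^N$ paired with the uniform $L^\infty_tH^1_x$ bound, plus weak-$*$ convergence of $u^N$. It then sets $f^N(t)=\norm{\sqrt{\rho^N}u^N}_{L^2_x}(t)$, shows $f^N\rightharpoonup f$ (in distribution, hence weakly in $L^2(0,T)$ along a subsequence) together with $\norm{f^N}_{L^2_t}\to\norm{f}_{L^2_t}$, concludes strong convergence in $L^2(0,T)$ and hence a.e.-in-$t$ convergence along a subsequence, and only then passes to the limit in the energy identity to get the inequality for a.e. $t$. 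If you prefer to keep a fixed-time argument, a correct alternative would be to use the $C_w([0,T];L^2_x)$ compactness of $\rho_nu_n$ (which does follow from the available bound on $\partial_t(\rho_nu_n)$) together with strong convergence of $\rho_n(t)$ and the joint convexity and lower semi-continuity of $(m,\rho)\mapsto |m|^2/\rho$; but as written, your identification step does not go through.
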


The main achievement in \cite{Kim1987WeakDensity} is not requiring the density to be bounded below by a positive value. Eventually, this manifests itself as a lack of compactness for the velocity, and an inequality in the energy equation results due to lower-semicontinuity of weakly-convergent norms. \bigskip

\subsection{The strategy} \label{the strategy}
The nonlinear coupling terms in \eqref{NLS} and \eqref{NSE} may be the most conspicuous differences between this model and other standard fluid dynamics models, but the source term in \eqref{continuity} is the most pernicious. We will motivate and discuss the strategy towards proving Theorem \ref{local existence}, beginning with a simple observation. Henceforth, we will refer to the linear (in $\psi$) part of $B$ as $B_L$. Thus, 
\begin{equation} \label{defining B_L}
    B_L = B - \mu\lvert\psi\rvert^2 = -\frac{1}{2}\Delta + \frac{1}{2}\lvert u\rvert^2 + iu\cdot\nabla
\end{equation}

\begin{lem} [$B_L$ is symmetric and $B$ is non-negative] \label{coupling symmetric and non-negative}
    
    \begin{enumerate}
        \item $\langle \phi,B_L\psi \rangle = \langle B_L\phi,\psi \rangle \ \forall \ \phi,\psi\in H^1_0(\Omega)$
        
        \item $\langle \psi,B\psi \rangle \ge 0 \ \forall \ \psi\in H^1_0(\Omega)$
    \end{enumerate}
\end{lem}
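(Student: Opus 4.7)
The plan is to handle the two claims in sequence, using the explicit decomposition
\[
B_L = -\tfrac{1}{2}\Delta + \tfrac{1}{2}|u|^2 + iu\cdot\nabla, \qquad B = B_L + \mu|\psi|^2,
\]
together with the fact that $u$ is divergence-free (from \eqref{divergence-free}) and both $\phi,\psi$ and $u$ vanish on $\partial\Omega$, so that all boundary terms from integration by parts can be discarded. Throughout, I must remember that $\langle\cdot,\cdot\rangle$ is sesquilinear with the \emph{first} slot conjugated.

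For part (1), I would split $B_L$ into its three constituent pieces. The Laplacian part is symmetric by the standard calculation $\int \bar{\phi}(-\tfrac{1}{2}\Delta\psi)\,dx = \tfrac{1}{2}\int \nabla\bar{\phi}\cdot\nabla\psi\,dx = \int \overline{(-\tfrac{1}{2}\Delta\phi)}\,\psi\,dx$, the boundary terms vanishing by $\phi,\psi\in H^1_0$. The multiplication operator $\tfrac{1}{2}|u|^2$ is symmetric because $|u|^2$ is real-valued and commutes with the conjugation. The only genuinely subtle piece is the transport term $iu\cdot\nabla$: here I would compute $\int \bar{\phi}\,iu\cdot\nabla\psi\,dx$, integrate by parts to move the derivative off $\psi$, and observe that the resulting $i\,(\nabla\cdot u)\bar{\phi}\psi$ term vanishes by \eqref{divergence-free}. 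What remains is $-i\int (u\cdot\nabla\bar{\phi})\psi\,dx$, and this is exactly $\int \overline{iu\cdot\nabla\phi}\,\psi\,dx$ since conjugating $i$ supplies the needed minus sign. Summing the three pieces proves the symmetry claim.

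For part (2), I would exploit the factored form $B = \tfrac{1}{2}(-i\nabla-u)^2 + \mu|\psi|^2$ appearing in \eqref{coupling}. Writing $D:=-i\nabla - u$, the goal is to establish the identity
\[
\int_{\Omega} \bar{\psi}\,(D\cdot D)\psi\,dx \;=\; \int_{\Omega} |D\psi|^2\,dx,
\]
from which non-negativity is immediate (since $\mu>0$ and $\int|\psi|^4\geq 0$). Expanding $|D\psi|^2 = |\nabla\psi|^2 + iu\bar\psi\cdot\nabla\psi - iu\psi\cdot\nabla\bar\psi + |u|^2|\psi|^2$ and expanding $\bar\psi\,D\cdot D\psi$ using the expansion derived above ($D\cdot D\psi = -\Delta\psi + 2iu\cdot\nabla\psi + |u|^2\psi$, valid because $\nabla\cdot u=0$), the identity reduces, after one integration by parts on the term $-i\int u\psi\cdot\nabla\bar\psi\,dx$, to a consequence of \eqref{divergence-free} and the boundary conditions. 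This is essentially the same cancellation as in part (1), and the two proofs can be presented in parallel.

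The main obstacle is purely bookkeeping: keeping straight which arguments are conjugated, ensuring every boundary term is justified by $H^1_0$ regularity, and invoking $\nabla\cdot u=0$ at exactly the right spot. There is no analytic difficulty, but the sesquilinear convention and the gauge-covariant form of $D$ make it easy to sign-flip a term, so I would structure the write-up around the identity $\langle \psi, D\cdot D\psi\rangle = \|D\psi\|_{L^2}^2$ and treat part (1) as a by-product of the same computation restricted to the linear terms.
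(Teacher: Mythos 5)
Your proposal is correct. Part (1) is essentially the paper's own argument: integrate by parts term by term, use $\phi,\psi\in H^1_0$ to kill the boundary contributions and $\nabla\cdot u=0$ to dispose of the extra term produced by the transport piece, with the conjugation of $i$ supplying the sign flip. Part (2), however, takes a genuinely different route. The paper expands $B$ and treats the cross term $i\int u\bar\psi\cdot\nabla\psi$ as an error to be absorbed: by H\"older and Young it is bounded by $\tfrac12\lVert u\psi\rVert_{L^2}^2+\tfrac12\lVert\nabla\psi\rVert_{L^2}^2$, which cancels against the first two (non-negative) terms, leaving $\langle\psi,B\psi\rangle\ge\mu\lVert\psi\rVert_{L^4}^4\ge0$. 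You instead exploit the factored form in \eqref{coupling} and prove the exact identity $\langle\psi,(-i\nabla-u)\cdot(-i\nabla-u)\psi\rangle=\lVert(-i\nabla-u)\psi\rVert_{L^2}^2$, which does hold: after the single integration by parts $\int u\cdot\psi\nabla\bar\psi=-\int u\cdot\bar\psi\nabla\psi$ (valid by $\nabla\cdot u=0$ and the vanishing of $\psi$ on $\partial\Omega$), both sides reduce to $\int\abs{\nabla\psi}^2+2i\int u\bar\psi\cdot\nabla\psi+\int\abs{u}^2\abs{\psi}^2$. This buys you an equality,
\begin{equation*}
    \langle\psi,B\psi\rangle=\tfrac12\lVert(-i\nabla-u)\psi\rVert_{L^2}^2+\mu\lVert\psi\rVert_{L^4}^4,
\end{equation*}
which is sharper and more structurally transparent (it is the ``complete the square'' of the covariant derivative, making the relative-kinetic-energy interpretation of $B$ explicit), whereas the paper's absorption argument is shorter to write but only yields the lower bound. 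Both arguments need the same implicit regularity on $u$ (enough integrability for $\int\abs{u}^2\abs{\psi}^2$ and the cross term to make sense, plus $u=0$ on $\partial\Omega$ and $\nabla\cdot u=0$), so neither is more demanding than the other.
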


\proof 

\begin{enumerate}
    \item Starting from \eqref{coupling}, we integrate by parts and use the fact that the wavefunction vanishes on the boundary, and that the velocity is divergence-free.
    
    \begin{align*}
        \langle \phi,B_L\psi \rangle = \int_{\Omega} \Bar{\phi} B_L\psi &= \int_{\Omega} \Bar{\phi} \left[ -\frac{1}{2}\Delta\psi + \frac{1}{2}\lvert u\rvert^2\psi + iu\cdot\nabla\psi \right] \\
        &= \int_{\Omega} \left[ -\frac{1}{2}\Delta\Bar{\phi} + \frac{1}{2}\lvert u\rvert^2\Bar{\phi} - iu\cdot\nabla\Bar{\phi} \right]\psi \\
        &= \int_{\Omega} (\overline{B_L\phi})\psi = \langle B_L\phi,\psi \rangle
    \end{align*}
    
    \item Similarly,
    \begin{align*}
        \langle \psi,B\psi \rangle = \int_{\Omega} \Bar{\psi} B\psi &= \int_{\Omega} \Bar{\psi} \left[ -\frac{1}{2}\Delta\psi + \frac{1}{2}\lvert u\rvert^2\psi + iu\cdot\nabla\psi + \mu\lvert\psi\rvert^2\psi \right] \\
        &= \frac{1}{2}\lVert\nabla\psi\rVert^2_{L^2_x} + \frac{1}{2}\int_{\Omega}\lvert u\rvert^2 \lvert\psi\rvert^2 + \int_{\Omega} iu\Bar{\psi}\cdot\nabla\psi +  \mu\lVert\psi\rVert^4_{L^4_x} \\
        &\ge \mu\lVert\psi\rVert^4_{L^4_x} \ge 0
    \end{align*}\medskip

    In going from the second line to the third, we used H\"older's and Young's inequalities to cancel the third term with the first two terms:
    \begin{gather*}
        \Bigg\lvert\int_{\Omega} iu\Bar{\psi}\cdot\nabla\psi \Bigg\rvert \le \lVert u\psi \rVert_{L^2_x}\lVert\nabla\psi\rVert_{L^2_x} \le \frac{1}{2}\lVert u\psi \rVert^2_{L^2_x} + \frac{1}{2}\lVert\nabla\psi\rVert^2_{L^2_x} \\
        \Rightarrow \int_{\Omega} iu\Bar{\psi}\cdot\nabla\psi \ge -\frac{1}{2}\lVert u\psi \rVert^2_{L^2_x} - \frac{1}{2}\lVert\nabla\psi\rVert^2_{L^2_x}
    \end{gather*}
\end{enumerate}

\qed \medskip

\begin{rem} \label{B has continuous spectrum}
    Note that there is no positive lower bound on $\langle \psi,B_L \psi \rangle$, so the spectrum of $B_L$ need not be strictly positive.
\end{rem} \medskip

Thus, by integrating \eqref{continuity} over the domain, the advective term vanishes and using Lemma \ref{coupling symmetric and non-negative}:

\begin{equation}
    \frac{d}{dt} \int_{\Omega}\rho \ dx = 2\Lambda \Re\int_{\Omega}\Bar{\psi}B\psi \ge 0
\end{equation} \medskip

This implies that the overall mass of the normal fluid does not decrease with time. In other words, the coupling causes superfluid to be converted into normal fluid. However, the RHS of \eqref{continuity} need not be non-negative pointwise, i.e., we are not guaranteed that $2\Lambda \Re\Bar{\psi}B\psi \ge 0 \ a.e. \ x\in\Omega$. This means that the density of the normal fluid may locally decrease to zero, or even negative values. To prevent this physically unrealistic scenario, we choose our existence time for the solution so as to ensure that the density is bounded below. \bigskip

\begin{mydef} [Local existence time] \label{local existence time definition}
    Start with an initial density field $\rho_0$ such that $$0<m\le\rho_0(x)\le M<\infty.$$ 
    Given $0<\varepsilon<m$, we define the local existence time\footnote{Of course, the local existence time depends on the choice of $\varepsilon$ and should ideally be written as $T_{\varepsilon}$. However, we will assume that the value of $\varepsilon$ is fixed throughout this article, and for brevity, drop the subscript.} for the solution as:
    
    \begin{equation} \label{abstract definition existence time}
        T := \inf \{t>0 \ \lvert \ \inf_{\Omega}\rho(t,x) = \varepsilon\}
    \end{equation}
\end{mydef} \bigskip

A formal solution to the continuity equation can be written using the method of characteristics. Let $X_{\alpha}(t)$ be the characteristic starting at $\alpha\in\Omega$. To wit, the characteristic solves the following differential equation:

\begin{equation} \label{characteristics}
    \begin{gathered}
        \frac{d}{dt}X_{\alpha}(t) = u(t,X_{\alpha}(t)) \\
        X_{\alpha}(0) = \alpha\in\Omega
    \end{gathered}
\end{equation} \medskip

Here, $u$ is the velocity of the normal fluid. So, along such characteristics,

\begin{equation} \label{density along characteristic}
    \rho(t,X_{\alpha}(t)) = \rho_0(\alpha) + 2\Lambda\Re\int_0^t \Bar{\psi}B\psi (\tau,X_{\alpha}(\tau)) \ d\tau
\end{equation} \medskip

From \eqref{abstract definition existence time} and \eqref{density along characteristic}, it is clear that a sufficient condition to ensure the density is bounded below by $\varepsilon$ is:

\begin{equation}
    2\Lambda\int_0^T \lvert\Bar{\psi}B\psi\rvert (\tau,X_{\alpha}(\tau)) \ d\tau < m-\varepsilon
\end{equation}\medskip

This can be, in turn, be ensured through the following sufficiency:

\begin{equation} \label{constraint to choose existence time}
    2\Lambda T^{\frac{1}{2}}\lVert\psi\rVert_{L^{\infty}_{[0,T]}L^{\infty}_x} \lVert B\psi\rVert_{L^2_{[0,T]}L^{\infty}_x} < m-\varepsilon
\end{equation}\medskip

So, $T$ is chosen small enough that \eqref{constraint to choose existence time} is satisfied. The boundedness in space of $B\psi$ in the above condition is what leads to the requirement of rather high-regularity data. The momentum equation \eqref{NSE} is itself handled in a manner similar to \cite{Kim1987WeakDensity}. The \eqref{NLS}, on the other hand, is used to derive increasingly higher-order a priori estimates, in order to achieve the required bound on $B\psi$. \bigskip


\subsection{Some useful results and properties}

In the proofs of our main results, we will be using (repeatedly, in some cases) the following lemmas.

\begin{lem}[Poincar\'e inequality] \label{poincare inequality}
    For $k\in\N$ and $f\in H_0^k(\Omega)$, we have $\lVert f \rVert_{L^2(\Omega)} \lesssim \lVert \nabla^k f \rVert_{L^2(\Omega)}$. In particular, the homogeneous norm is equivalent to the standard Sobolev norm: 
    \begin{equation*}
        \lVert f \rVert_{H^k(\Omega)} \lesssim \lVert f \rVert_{\Dot{H}^k(\Omega)} \lesssim \lVert f \rVert_{H^k(\Omega)}
    \end{equation*}
\end{lem}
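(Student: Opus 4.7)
The plan is to argue by induction on $k$, bootstrapping from the classical Poincar\'e inequality for $H^1_0(\Omega)$ on a bounded domain. The base case $k=1$ is the standard statement: for $f\in H^1_0(\Omega)$ with $\Omega$ bounded, one has $\|f\|_{L^2(\Omega)} \le C_\Omega \|\nabla f\|_{L^2(\Omega)}$. I would either quote this from any standard reference or give the classical one-line proof: extend $f$ by zero to a slab containing $\Omega$, write $f(x) = \int_0^{x_1} \partial_1 f(s,x_2,x_3)\,ds$ for $f\in \mathfrak{D}(\Omega)$, apply Cauchy--Schwarz in $x_1$, integrate in all variables, and pass to the limit by density of $\mathfrak{D}(\Omega)$ in $H^1_0(\Omega)$.

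For the inductive step, suppose the result holds for index $k-1$, and let $f\in H^k_0(\Omega)$. The key observation is that the partial derivative map $\partial_i : H^k(\Omega) \to H^{k-1}(\Omega)$ is bounded, and sends $\mathfrak{D}(\Omega)$ into $\mathfrak{D}(\Omega)$; by density it therefore sends $H^k_0(\Omega)$ into $H^{k-1}_0(\Omega)$. Consequently $\nabla f \in H^{k-1}_0(\Omega)$ componentwise, and the inductive hypothesis applies to each component of $\nabla f$, giving
\begin{equation*}
\|\nabla f\|_{L^2(\Omega)} \;\lesssim\; \|\nabla^{k-1}(\nabla f)\|_{L^2(\Omega)} \;=\; \|\nabla^k f\|_{L^2(\Omega)}.
\end{equation*}
Combining this with the $k=1$ case $\|f\|_{L^2} \lesssim \|\nabla f\|_{L^2}$ closes the induction.

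For the equivalence of the full and homogeneous norms, the inequality $\|f\|_{\dot H^k} \le \|f\|_{H^k}$ is immediate from the definition $\|f\|_{H^k}^2 = \sum_{j=0}^{k}\|\nabla^j f\|_{L^2}^2$. Conversely, iterating the inequality just proved (applied to $f$, $\nabla f$, $\nabla^2 f$, \dots, each of which lies in the appropriate $H^{\cdot}_0$ space by the same density argument) yields $\|\nabla^j f\|_{L^2} \lesssim \|\nabla^k f\|_{L^2}$ for every $0\le j \le k-1$, and summing gives $\|f\|_{H^k} \lesssim \|f\|_{\dot H^k}$.

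No step is truly an obstacle; the only mildly delicate point is the assertion that taking a partial derivative preserves the zero trace class, i.e.\ maps $H^k_0(\Omega)$ into $H^{k-1}_0(\Omega)$. This is what allows the induction to be performed cleanly, and it is what makes the Poincar\'e-type bound available at every level rather than only at $k=1$. One should record this at the start of the argument and then the rest is purely bookkeeping.
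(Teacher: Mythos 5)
Your proposal is correct and follows exactly the route the paper indicates: the classical Poincar\'e inequality for $H^1_0(\Omega)$ combined with an induction on $k$, the induction being justified by the (correct) observation that $\partial_i$ maps $H^k_0(\Omega)$ into $H^{k-1}_0(\Omega)$ by density. The paper gives no further details, so your write-up simply fills in the same argument it gestures at.
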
\medskip

This follows from the standard Poincar\'e inequality (see Section 5.6.1 in \cite{Evans2010PartialEquations}) and an induction argument. \bigskip

Next, we will list an analogous result to the above lemma, except that the derivatives are replaced by fractional powers of the negative Laplacian operator. On a torus (with periodic boundary conditions), the action of $(-\Delta)^s$ on functions with zero mean can be described using Fourier series, to show that $\lVert (-\Delta)^s f\rVert_{L^2_x} \equiv \lVert f \rVert_{\Dot{H}^{2s}_x} \equiv \lVert f \rVert_{H^{2s}_x}$ (see Section 2.3 in \cite{Robinson2016TheEquations}). Similarly, on the whole space, one can use the Fourier transform. The case of a (smooth) bounded domain is different $-$ the equivalence between the homogeneous and regular Sobolev norms doesn't hold for all indices. \bigskip

First, we define fractional powers of positive, self-adjoint operators with compact inverses.

\begin{mydef} [Fractional operator spaces] \label{fractional operator spaces definition}
    As described in Section 2.1 of \cite{Fefferman2019SimultaneousEigenspaces}, for a positive, self-adjoint operator $A$ (defined on a separable Hilbert space $H$) with a compact inverse, we will define spaces of its fractional powers using an eigenfunction expansion. Such an operator $A$ has (see Chapter 6 and Appendix D in \cite{Evans2010PartialEquations}) a discrete set of positive and non-decreasing eigenvalues (say $0<e_1\le e_2\le e_3 \dots \rightarrow\infty$), and the corresponding eigenfunctions $(\{ w_j \}\in C^{\infty}(\Bar{\Omega}))$ can be chosen to be orthonormal in the $H$ norm. For $\alpha \ge 0$:

    \begin{equation*}
        D(A^{\alpha}) = \left\{ u = \sum_{j=1}^{\infty} \Hat{u}_j w_j : \sum_{j=1}^{\infty} e_j^{2\alpha} \abs{\Hat{u}_j}^2 < \infty \right\}
    \end{equation*}\medskip

    \noindent where $\hat{u}_j = \langle u,w_j \rangle_H$ (the inner product on $H$). For $\alpha<0$, $D(A^{\alpha})$ is defined as the dual space of $D(A^{-\alpha})$, via the inner product $\langle u,v \rangle_{D(A^{\alpha})} = \sum_{j=1}^{\infty} e_j^{2\alpha}\Hat{u}_j\Hat{v}_j$.
\end{mydef}\medskip

\begin{lem}[Poincar\'e inequality for fractional derivatives] \label{poincare inequality fractional}
    For $s\in\R,s>0$ and $f\in H^s_0(\Omega)$, we have: 
    $$\lVert f \rVert_{H^s(\Omega)} \lesssim \lVert (-\Delta)^{\frac{s}{2}} f \rVert_{L^2(\Omega)} \lesssim \lVert f \rVert_{H^s(\Omega)}$$
\end{lem}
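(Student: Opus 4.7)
The plan is to exploit the spectral characterization of $(-\Delta)^{s/2}$ given by Definition \ref{fractional operator spaces definition}. Taking $A = -\Delta$ with homogeneous Dirichlet boundary conditions on the smooth bounded domain $\Omega$, the operator $A$ is positive, self-adjoint, and has compact inverse, so there is an $L^2$-orthonormal basis of smooth eigenfunctions $\{w_j\}$ with eigenvalues $0 < e_1 \le e_2 \le \cdots \to \infty$, and every $f = \sum_j \hat{f}_j w_j$ in $H^s_0(\Omega)$ satisfies $\lVert (-\Delta)^{s/2} f \rVert_{L^2}^2 = \sum_j e_j^s |\hat{f}_j|^2$.

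First I would verify the equivalence at integer $s$. For $s = 2k$ even, approximating $f$ by $\mathfrak{D}(\Omega)$ functions and integrating by parts $k$ times yields $\lVert (-\Delta)^k f \rVert_{L^2}^2 = \sum_j e_j^{2k} |\hat{f}_j|^2$, and this quantity is comparable to $\lVert f \rVert_{H^{2k}}^2$ by standard elliptic regularity for the Dirichlet Laplacian on a smooth domain together with Lemma \ref{poincare inequality}. For $s = 2k+1$ odd, writing $\lVert (-\Delta)^{(2k+1)/2} f \rVert_{L^2} = \lVert \nabla (-\Delta)^k f \rVert_{L^2}$ and applying the same elliptic regularity in combination with Lemma \ref{poincare inequality} gives the analogous equivalence with $\lVert f \rVert_{H^{2k+1}}$.

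For non-integer $s > 0$, I would interpolate between the consecutive integer endpoints. The spectral sum $\sum_j e_j^s |\hat{f}_j|^2$ is log-convex in $s$, which immediately yields $\lVert (-\Delta)^{s/2} f \rVert_{L^2} \le \lVert (-\Delta)^{s_0/2} f \rVert_{L^2}^{1-\theta} \lVert (-\Delta)^{s_1/2} f \rVert_{L^2}^{\theta}$ for $s = (1-\theta) s_0 + \theta s_1$. On the Sobolev side, the classical Lions-Magenes identity $[L^2(\Omega), H^{2k}(\Omega) \cap H^1_0(\Omega)]_{\theta} = H^{2k\theta}_0(\Omega)$, with equivalent norms, is valid whenever $2k\theta \notin \{\tfrac{1}{2}, \tfrac{3}{2}, \ldots\}$. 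Matching the two interpolation scales at the integer endpoints where they coincide gives both directions of the claimed inequality for every such non-exceptional $s$; this regime covers every Sobolev index appearing in the rest of the manuscript (of the form $\tfrac{3}{2} + \delta$, $\tfrac{5}{2} + \delta$, $\tfrac{7}{2} + \delta$ with $\delta \in (0,\tfrac{1}{2})$).

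The main obstacle is the familiar subtlety at the half-integer indices $s \in \{\tfrac{1}{2}, \tfrac{3}{2}, \ldots\}$, where $H^s_0(\Omega)$ is strictly larger than the fractional-power domain $D((-\Delta)^{s/2})$, the latter being the Lions-Magenes space $H^s_{00}(\Omega)$. At such $s$ one direction of the equivalence fails for general $f \in H^s_0(\Omega)$ because $(-\Delta)^{s/2}$ does not map $H^s_0$ into $L^2$; repairing this requires the Lions-Magenes Hardy inequality and a more delicate interpolation argument, but this difficulty does not arise in any application in the paper.
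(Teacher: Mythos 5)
Your strategy is genuinely different from the paper's: the paper simply quotes Section 3 of \cite{Fefferman2019SimultaneousEigenspaces} for both inequalities when $0<s\le 1$ and then extends the upper bound to $s>1$ by the one-line induction $\lVert\mathcal{L}^{k+r}u\rVert_{L^2}=\lVert\mathcal{L}^{k-1+r}(\mathcal{L}u)\rVert_{L^2}\lesssim\lVert\mathcal{L}u\rVert_{H^{2(k-1+r)}}\lesssim\lVert u\rVert_{H^{2(k+r)}}$, which avoids any identification of interpolation spaces. Your closing caveat is a real and relevant subtlety that the paper glosses over: at $s=\tfrac12$ one has $H^{1/2}_0(\Omega)=H^{1/2}(\Omega)$ while $D((-\Delta)^{1/4})=H^{1/2}_{00}(\Omega)$ is strictly smaller, so the upper bound cannot hold with a uniform constant (though the truly exceptional indices are $s\in\{\tfrac12,\tfrac52,\tfrac92,\dots\}$, where a new boundary condition $\Delta^j u|_{\partial\Omega}=0$ becomes borderline, rather than all half-integers: at $s=\tfrac32$ there is no exception). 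None of these indices are used in the paper, as you note.

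However, the fractional step of your argument does not go through as written. First, the pointwise log-convexity bound $\lVert(-\Delta)^{s/2}f\rVert_{L^2}\le\lVert(-\Delta)^{s_0/2}f\rVert_{L^2}^{1-\theta}\lVert(-\Delta)^{s_1/2}f\rVert_{L^2}^{\theta}$ controls the quantity you want by the norm at the \emph{upper} endpoint $s_1>s$, which is not dominated by $\lVert f\rVert_{H^s}$ and is in general infinite for $f\in H^s_0(\Omega)$; the Sobolev interpolation inequality runs the opposite way ($\lVert f\rVert_{H^s}\lesssim\lVert f\rVert_{H^{s_0}}^{1-\theta}\lVert f\rVert_{H^{s_1}}^{\theta}$), so convexity for a fixed $f$ cannot ``immediately yield'' the upper bound — you need interpolation of \emph{spaces} (the K- or complex method applied to inclusion maps). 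Second, the identity you invoke, $[L^2,H^{2k}\cap H^1_0]_{\theta}=H^{2k\theta}_0$, is false in the relevant range: for $k=1$ and $\tfrac32<2\theta<2$ the left-hand side is $D((-\Delta)^{\theta})=H^{2\theta}\cap H^1_0$, which strictly contains $H^{2\theta}_0$, and for $k\ge2$ the endpoint $H^{2k}\cap H^1_0$ is not even $D((-\Delta)^{k})$ (that domain requires $u=\Delta u=\dots=\Delta^{k-1}u=0$ on $\partial\Omega$), so the left-hand side is not the fractional-power domain either. Relatedly, the two scales do not ``coincide at the integer endpoints'': $D((-\Delta)^{k})\supsetneq H^{2k}_0$ for every $k\ge1$. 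A repaired version of your plan interpolates the bounded inclusions $H^{2k}_0\hookrightarrow D((-\Delta)^{k})$ (upper bound at integers) and $D((-\Delta)^{k})\hookrightarrow H^{2k}$ (integer elliptic regularity) against the identity on $L^2$, using $[L^2,D((-\Delta)^{k})]_{\theta}=D((-\Delta)^{k\theta})$, the genuine Lions--Magenes identity $[L^2,H^{2k}_0]_{\theta}=H^{2k\theta}_0$ (valid when $2k\theta-\tfrac12\notin\Z$), and $[L^2,H^{2k}]_{\theta}=H^{2k\theta}$ for the reverse inequality; with those substitutions the argument closes, but as stated there is a gap.
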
\medskip

\begin{proof}
    The statement is actually true for $s>-\frac{1}{2}$ (see discussion in \cite{Guermond2011APowers}), but we are only concerned with positive values of $s$. In Section 3 of \cite{Fefferman2019SimultaneousEigenspaces}, Fefferman et al prove both the inequalities for $0<s\le 1$. For the case of $s>1$, the authors use an induction argument to establish only the first inequality. The argument can be easily used to prove the second inequality, as shown below. \medskip
    
    Let us denote the negative Dirichlet Laplacian by $\mathcal{L}$. Consider $k\in\N,0<r\le 1$. We already have that $\norm{\mathcal{L}^r u}_{L^2_x} \lesssim \norm{u}_{H^{2r}_x}$. Assume this holds for all powers of $\mathcal{L}$ in the range $(0,k]$ for some $k\in\N$. Since $k+r>1$, thus $D(\mathcal{L}^{k+r})\subset D(\mathcal{L})$; this is because of Definition \ref{fractional operator spaces definition} and the strictly positive spectrum of $\mathcal{L}$ (see \cite{Fefferman2019SimultaneousEigenspaces} for details). This means any $u\in D(\mathcal{L}^{k+r})$ also belongs to $D(\mathcal{L})$, implying that $\mathcal{L}u = -\Delta u$. Therefore,
    
    \begin{equation*}
        \norm{\mathcal{L}^{k+r}u}_{L^2_x} = \norm{\mathcal{L}^{k-1+r}\mathcal{L}u}_{L^2_x} \lesssim \norm{\mathcal{L}u}_{H^{2(k-1+r)}_x} = \norm{-\Delta u}_{H^{2(k-1+r)}_x} \lesssim \norm{u}_{H^{2(k+r)}_x}
    \end{equation*}\medskip
    
    The first inequality is due to the inductive assumption, and the last inequality follows from the Poincar\'e inequality in Lemma \ref{poincare inequality}. The inductive step has thus been established, and the proof is completed.
\end{proof} \bigskip

While deriving the highest-order a priori estimate for the wavefunction, we require the following lemma as an abstract integration-by-parts.

\begin{lem} \label{fractional powers self-adjoint property}
    Let $A$ be a positive, self-adjoint operator with a compact inverse, defined on a separable Hilbert space $H$. For $s_1,s_2\in\R$, $s_1,s_2 \ge 0$, and $u,v \in D(A^{s_1+s_2})$,
    
    \begin{equation*}
        \langle A^{s_1}u,A^{s_2}v \rangle_H = \langle u,A^{s_1+s_2}v \rangle_H
    \end{equation*}
\end{lem}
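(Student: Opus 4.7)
The plan is to reduce everything to the eigenbasis of $A$ provided by Definition \ref{fractional operator spaces definition} and check that the two inner products are represented by the same absolutely convergent scalar series.

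More specifically, since $A$ is a positive self-adjoint operator on $H$ with compact inverse, there is an orthonormal basis $\{w_j\}_{j\ge 1}$ of eigenfunctions with eigenvalues $0 < e_1 \le e_2 \le \dots \to \infty$, and the definition of $A^{\alpha}$ via the series expansion gives $A^{\alpha}w_j = e_j^{\alpha}w_j$ on $D(A^{\alpha})$. Given $u,v\in D(A^{s_1+s_2})$, I would write the expansions $u = \sum_j \hat u_j w_j$ and $v = \sum_j \hat v_j w_j$ and observe that $A^{s_1}u = \sum_j e_j^{s_1}\hat u_j w_j$ and $A^{s_2}v = \sum_j e_j^{s_2}\hat v_j w_j$ as elements of $H$ (both series converge in $H$ because $D(A^{s_1+s_2}) \subset D(A^{s_j})$ for each $j=1,2$, which follows from $s_1,s_2 \le s_1+s_2$ and $\inf_j e_j = e_1 > 0$).

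Using orthonormality of the $\{w_j\}$ and continuity of the inner product with respect to $H$-convergent partial sums, one then computes
\begin{equation*}
\langle A^{s_1}u,A^{s_2}v\rangle_H \;=\; \sum_{j=1}^{\infty} e_j^{s_1+s_2}\,\overline{\hat u_j}\,\hat v_j \;=\; \langle u, A^{s_1+s_2}v\rangle_H,
\end{equation*}
where for the right-hand equality I use $A^{s_1+s_2}v = \sum_j e_j^{s_1+s_2}\hat v_j w_j \in H$, which is exactly what $v\in D(A^{s_1+s_2})$ means. Absolute convergence of the middle series (needed to justify the passage through partial sums without worrying about rearrangements) follows from Cauchy--Schwarz:
\begin{equation*}
\sum_{j} e_j^{s_1+s_2}|\hat u_j\,\hat v_j| \;\le\; \Bigl(\sum_j e_j^{2s_1}|\hat u_j|^2\Bigr)^{1/2}\Bigl(\sum_j e_j^{2s_2}|\hat v_j|^2\Bigr)^{1/2} \;=\; \|A^{s_1}u\|_H\,\|A^{s_2}v\|_H < \infty.
\end{equation*}

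There is no real obstacle here; the only mild subtlety is the verification that $D(A^{s_1+s_2})\subset D(A^{s_1})\cap D(A^{s_2})$, which is precisely where the hypothesis $s_1,s_2\ge 0$ together with the strict positivity of the spectrum is used (so that $e_j^{2s_i} \le \max(1,e_1^{-2s_{3-i}})\, e_j^{2(s_1+s_2)}$). Once this containment is in hand, all three series above are absolutely convergent, the manipulations are routine, and the identity drops out. Essentially, the lemma is the spectral-theorem statement that $A^{s_1}$ and $A^{s_2}$ are adjoint-compatible on $D(A^{s_1+s_2})$ with composition $A^{s_1+s_2}$.
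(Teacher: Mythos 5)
Your proof is correct and follows essentially the same route as the paper: expand $u$ and $v$ in the eigenbasis of $A$, use orthonormality of the eigenfunctions to reduce both inner products to the series $\sum_j e_j^{s_1+s_2}\overline{\hat u_j}\hat v_j$. Your additional care about domain inclusions and absolute convergence is a welcome refinement of the paper's more formal computation, but it is the same argument.
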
\medskip

\begin{proof}
    Using the notation in Definition \ref{fractional operator spaces definition},
    
    \begin{multline*}
        \langle A^{s_1}u,A^{s_2}v \rangle_H = \sum_{j,k=1}^{\infty} e_j^{s_1}e_k^{s_2}\hat{u}_j\hat{v}_j\langle w_j,w_k \rangle_H = \sum_{j,k=1}^{\infty} e_j^{s_1}e_k^{s_2}\hat{u}_j\hat{v}_j \delta_{jk} \\ = \sum_{j,k=1}^{\infty} e_k^{s_1+s_2}\hat{u}_j\hat{v}_k \delta_{jk} = \sum_{j,k=1}^{\infty} e_k^{s_1+s_2}\hat{u}_j\hat{v}_k \langle w_j,w_k \rangle_H = \langle u,A^{s_1+s_2}v \rangle_H
    \end{multline*}
\end{proof}

\medskip

The following well-known Sobolev embeddings (see Chapter 5 of \cite{Evans2010PartialEquations}) will also be useful.

\begin{lem}[Sobolev embeddings] \label{sobolev embeddings}
    For $\Omega$ a smooth, bounded subset of $\R^3$,
    \begin{enumerate}
        \item $H^1(\Omega)\subset L^6(\Omega)$ \ ; \ $H^1(\Omega)\Subset L^p(\Omega), p\in [1,6)$
        \item $H^s(\Omega) \subset L^{\infty}(\Omega) \ \forall \ s>\frac{3}{2}$ \ ; \ $H^s(\R) \subset L^{\infty}(\R) \ \forall \ s>\frac{1}{2}$
        \item $H^s(\Omega) \Subset H^{s'}(\Omega) \ \forall \ s,s'\in\R, s>s'$
    \end{enumerate}
\end{lem}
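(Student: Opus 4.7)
The plan is to reduce each claim to a classical Sobolev embedding theorem on $\R^n$ and transfer it to $\Omega$ via the bounded linear extension operator $E: H^s(\Omega) \to H^s(\R^3)$, which exists for all $s \ge 0$ because $\Omega$ is smooth and bounded. For part (1), the continuous embedding $H^1(\Omega) \subset L^6(\Omega)$ follows from the Gagliardo-Nirenberg-Sobolev inequality on $\R^3$ applied to $Ef$ (with critical exponent $\frac{np}{n-p} = 6$ when $n=3$, $p=2$) and then restricted to $\Omega$; the compact embedding $H^1(\Omega) \Subset L^p(\Omega)$ for $p \in [1,6)$ is precisely the Rellich-Kondrachov theorem on a smooth bounded domain.

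For part (2), I would argue via the Fourier transform on $\R^n$: writing $f(x) = (2\pi)^{-n/2}\int e^{ix\cdot\xi}\hat{f}(\xi)\, d\xi$ and applying Cauchy-Schwarz with weight $(1+|\xi|^2)^{s/2}$ yields
\begin{equation*}
    \|f\|_{L^\infty(\R^n)} \le C\|f\|_{H^s(\R^n)} \left(\int_{\R^n} (1+|\xi|^2)^{-s}\, d\xi\right)^{1/2},
\end{equation*}
where the tail integral converges if and only if $2s > n$. This gives $s > \tfrac{3}{2}$ on $\R^3$ and $s > \tfrac{1}{2}$ on $\R$; transferring the three-dimensional case back to $\Omega$ uses $E$ once more.

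For part (3), the Rellich-Kondrachov theorem directly gives $H^1(\Omega) \Subset L^2(\Omega)$, and by iteration $H^k(\Omega) \Subset H^{k-1}(\Omega)$ for any positive integer $k$; the fractional cases follow either by interpolation between consecutive integer exponents, or by extending via $E$ and applying a Fourier-truncation argument on $\R^3$ (uniform control of $\|(1+|\xi|^2)^{s/2}\widehat{Ef}\|_{L^2}$ forces convergence of low frequencies along a subsequence, while the high-frequency tail is damped by the factor $(1+|\xi|^2)^{(s'-s)/2}$). The main obstacle I would anticipate is the consistent handling of fractional $H^s(\Omega)$ spaces on a bounded domain: one must reconcile the various equivalent characterizations (restriction from $H^s(\R^3)$, intrinsic Gagliardo-Slobodeckij norm, and, for vanishing trace, the Dirichlet-Laplacian eigenfunction expansion of Definition \ref{fractional operator spaces definition}). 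Once equivalence of these norms is taken for granted, all three claims reduce to the standard theory in Evans.
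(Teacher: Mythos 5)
The paper gives no proof of this lemma at all --- it simply records these as classical facts with a citation to Chapter 5 of \cite{Evans2010PartialEquations} --- and your outline (extension operator plus Gagliardo--Nirenberg--Sobolev and Rellich--Kondrachov for (1), the Fourier-transform $L^1$ bound with the $(1+\abs{\xi}^2)^{-s}$ tail integral for (2), and extension with Fourier truncation or interpolation for (3)) is exactly the standard textbook route to these results, so it is consistent with what the paper relies on. The only step you would need to flesh out in a fully written proof is the one you already flag, namely fixing a single characterization of fractional $H^s(\Omega)$ and justifying the low-frequency compactness in part (3) (e.g.\ via a cutoff so the extended sequence has uniformly compact support, making the Fourier transforms equicontinuous), but this is routine.
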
\bigskip

We will use the Aubin-Lions-Simon compactness argument to extract a strongly-converging subsequence, after obtaining uniform a priori bounds on the approximating sequence of solutions. The Lions-Magenes lemma will prove useful in the final a priori estimate, to bound $B\psi$ in $L^{\infty}(\Omega)$.

\begin{lem}[Aubin-Lions-Simon lemma] \label{aubin-lions}
    Let $X_0,X,X_1$ be three Banach spaces such that $X_0\Subset X\subset X_1$. For $1\le p,q\le\infty$, define
    \begin{equation*}
        V := \{u\in L^p(0,T;X_0) , \ \partial_t u\in L^q(0,T;X_1) \}
    \end{equation*}
    Then, $V\Subset L^p(0,T;X)$ when $p<\infty$, and $V\Subset C(0,T;X)$ when $p=\infty$ and $q>1$.
\end{lem}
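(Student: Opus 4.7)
The plan is to follow Simon's self-contained argument (which supersedes the earlier versions of Aubin and Lions), reducing the compactness statement to an Ehrling interpolation together with an Arzel\`a--Ascoli / Fr\'echet--Kolmogorov argument. Let $\{u_n\}$ be a bounded sequence in $V$; the task is to extract a subsequence converging strongly in $L^p(0,T;X)$ (respectively in $C([0,T];X)$ in the endpoint case).

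First I would establish Ehrling's lemma: because $X_0 \Subset X$, for every $\eta>0$ there exists $C_\eta>0$ with
\[
\|v\|_X \le \eta\|v\|_{X_0} + C_\eta\|v\|_{X_1}, \qquad v\in X_0.
\]
This is a standard contradiction argument against the compact embedding $X_0\Subset X$: if it failed one could exhibit a sequence in $X_0$ with bounded $X_0$-norm, vanishing $X_1$-norm, and $X$-norm bounded away from zero, violating compactness. Ehrling's inequality is the engine of the proof: it lets us trade control in the strong space $X_0$ (where hypothesis gives boundedness) against control in the weak space $X_1$ (where the hypothesis on $\partial_t u$ gives equicontinuity in time).

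Second, I would extract a time modulus in $X_1$ from the bound on $\partial_t u_n$. For $q>1$, H\"older's inequality gives
\[
\|u_n(t+h)-u_n(t)\|_{X_1} \le \int_t^{t+h}\|\partial_t u_n(s)\|_{X_1}\,ds \le h^{1-1/q}\|\partial_t u_n\|_{L^q(0,T;X_1)},
\]
uniformly in $n$; for $q=1$ one only gets equi-absolute-continuity of time-translations, which is still sufficient in the $L^p$ case. Combining this with Ehrling yields an $X$-valued modulus of the form $\|u_n(\cdot+h)-u_n(\cdot)\|_X \le \eta\, K_0 + C_\eta\cdot o(1)$ as $h\to 0$, uniformly in $n$ (where $K_0$ bounds $\|u_n\|_{L^p(0,T;X_0)}$), which can be made arbitrarily small by first choosing $\eta$ small and then $h$ small.

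For the case $p<\infty$, uniform $L^p(0,T;X)$ boundedness follows from the continuous embedding $X_0 \hookrightarrow X$, and the $X$-valued modulus above supplies the equicontinuity-of-translations hypothesis of the Fr\'echet--Kolmogorov criterion in the Bochner space $L^p(0,T;X)$, delivering precompactness. For the endpoint $p=\infty$, $q>1$, the H\"older-in-time estimate and pointwise $X_0$-boundedness combine via Ehrling into equicontinuity of $\{u_n\}$ in $C([0,T];X)$, and Arzel\`a--Ascoli (together with pointwise precompactness in $X$ inherited from $X_0\Subset X$, extracted along a countable dense set of times by a diagonal argument) yields a subsequence converging in $C([0,T];X)$. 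The main obstacle is this endpoint case: one must upgrade almost-everywhere-in-$t$ bounds in $X_0$ to genuinely pointwise statements by passing to a continuous representative, and the requirement $q>1$ is precisely what legitimizes this by producing uniform continuity in time rather than merely equi-integrability of $\partial_t u_n$.
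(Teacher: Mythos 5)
The paper itself does not prove this lemma; it is quoted directly as Corollary 4 of \cite{Simon1986CompactLpOTB}, so there is no internal argument to compare against. Your sketch follows the standard route by which the result is actually proved (Ehrling interpolation, translation estimates in $X_1$ from the bound on $\partial_t u_n$, then a compactness criterion in the Bochner space, resp.\ Arzel\`a--Ascoli at the endpoint), and the endpoint case $p=\infty$, $q>1$ is assembled correctly. However, there is a genuine gap in the case $p<\infty$ as you have written it: you invoke ``the Fr\'echet--Kolmogorov criterion in $L^p(0,T;X)$'' with only two hypotheses, uniform boundedness and smallness of time translations. When $X$ is infinite dimensional these two conditions do \emph{not} imply precompactness: take $u_n(t)\equiv e_n$ with $\{e_n\}$ orthonormal in a Hilbert space $X$ --- translations vanish identically and the sequence is bounded, yet no subsequence converges. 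The vector-valued criterion (Simon's Theorem 1) additionally requires that the averages $\bigl\{\int_{t_1}^{t_2}u_n\,dt\bigr\}_n$ be relatively compact in $X$ for all $0<t_1<t_2<T$; in your setting this does hold, because these averages are bounded in $X_0$ and $X_0\Subset X$, but that is exactly where the compact embedding must be used a second time, and it cannot be replaced by mere boundedness in $L^p(0,T;X)$ via the continuous embedding, as your text suggests.

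A smaller issue: the modulus $\|u_n(\cdot+h)-u_n(\cdot)\|_X\le \eta K_0+C_\eta\,o(1)$ is a pointwise-in-$t$ statement and is only available when $q>1$. For $q=1$ (which the lemma allows when $p<\infty$) the translation difference is small only in $L^1(0,T-h;X_1)$, not uniformly in $t$; the standard repair is to observe that it is also pointwise bounded by $\|\partial_t u_n\|_{L^1(0,T;X_1)}$ and interpolate, giving $\|u_n(\cdot+h)-u_n(\cdot)\|_{L^p(0,T-h;X_1)}\lesssim h^{1/p}\|\partial_t u_n\|_{L^1(0,T;X_1)}$, and then to apply Ehrling under the $L^p$-in-time norm rather than pointwise. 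With these two repairs your argument becomes a complete proof of the cited statement; for the purposes of this paper the lemma is in any case used as a black box with the reference to Simon.
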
\medskip

\begin{lem}[Lions-Magenes lemma] \label{lions-magenes}
    Let $X,Y,X'$ be three Hilbert spaces such that $X\subset Y\subset X'$, and $X'$ is the dual of $X$ (with the dual pairing denoted by $\langle \cdot ,\cdot\rangle$). If $u\in L^2(0,T;X)$ and its time derivative $u'\in L^2(0,T;X')$, then $u$ is a.e. equal to a function in $C([0,T];Y)$. Moreover, the following equality holds in the sense of scalar distributions:
    
    \begin{equation*}
        \frac{d}{dt}\lVert u\rVert_Y^2 = 2 \langle u',u \rangle_{X'\times X}
    \end{equation*}
\end{lem}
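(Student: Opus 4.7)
\medskip

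\noindent\textbf{Proof proposal (Lions--Magenes lemma).} The plan is a standard time-mollification and density argument, organized around the model case where $u$ is smooth in time. First, I would reduce to the case of a function defined on all of $\mathbb{R}$: extend $u$ by reflection across the endpoints $t=0,T$ and then multiply by a smooth cutoff supported in a slight enlargement $(-\eta,T+\eta)$. This extension, call it $\tilde{u}$, still lies in $L^2(\mathbb{R};X)$ with $\tilde{u}'\in L^2(\mathbb{R};X')$, because reflection preserves $L^2$ integrability and differentiates to a (weak) reflection of $u'$; the cutoff contributes only multiplier terms that remain in the appropriate spaces via the embedding $X\subset X'$. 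Mollify $\tilde{u}$ in $t$ with a standard Friedrichs kernel $\rho_\varepsilon$ to obtain $u_\varepsilon=\rho_\varepsilon*\tilde{u}\in C^\infty(\mathbb{R};X)$, with $u_\varepsilon'=\rho_\varepsilon*\tilde{u}'$ and the usual convergences $u_\varepsilon\to u$ in $L^2(0,T;X)$ and $u_\varepsilon'\to u'$ in $L^2(0,T;X')$.

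For the smooth approximants, the map $t\mapsto\|u_\varepsilon(t)\|_Y^2$ is $C^1$ and by the chain rule in the Hilbert space $Y$ one has the pointwise identity
\begin{equation*}
    \frac{d}{dt}\|u_\varepsilon(t)\|_Y^2 = 2\,\langle u_\varepsilon'(t),u_\varepsilon(t)\rangle_Y = 2\,\langle u_\varepsilon'(t),u_\varepsilon(t)\rangle_{X'\times X},
\end{equation*}
where the second equality uses the continuous embedding $X\hookrightarrow Y\cong Y'\hookrightarrow X'$ and the compatibility of the dual pairings (this is where the Gelfand triple structure $X\subset Y\subset X'$ is essential). Integrating from $s$ to $t$ gives
\begin{equation*}
    \|u_\varepsilon(t)\|_Y^2 - \|u_\varepsilon(s)\|_Y^2 = 2\int_s^t \langle u_\varepsilon'(\tau),u_\varepsilon(\tau)\rangle_{X'\times X}\,d\tau.
\end{equation*}

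Next I would show that $\{u_\varepsilon\}$ is Cauchy in $C([0,T];Y)$. Applying the above identity to $u_\varepsilon-u_{\varepsilon'}$ and using Cauchy--Schwarz in $L^2(0,T;X')\times L^2(0,T;X)$, one obtains
\begin{equation*}
    \sup_{t\in[0,T]}\|u_\varepsilon(t)-u_{\varepsilon'}(t)\|_Y^2 \lesssim \|u_\varepsilon(t_0)-u_{\varepsilon'}(t_0)\|_Y^2 + \|u_\varepsilon'-u_{\varepsilon'}'\|_{L^2(0,T;X')}\,\|u_\varepsilon-u_{\varepsilon'}\|_{L^2(0,T;X)},
\end{equation*}
after fixing a $t_0$ at which $\|u_\varepsilon(t_0)-u_{\varepsilon'}(t_0)\|_Y\to 0$ (such a $t_0$ exists because $u_\varepsilon\to u$ in $L^2(0,T;Y)$ forces convergence pointwise along a subsequence, and averaging over a small interval gives a $t_0$ with the quantitative bound). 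The right-hand side tends to $0$, so $\{u_\varepsilon\}$ converges uniformly in $Y$ to a continuous function $v\in C([0,T];Y)$; since $u_\varepsilon\to u$ in $L^2(0,T;Y)$ as well, $u=v$ a.e.

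Finally, to obtain the distributional identity, test the integrated formula for $u_\varepsilon$ against an arbitrary $\varphi\in\mathfrak{D}(0,T)$ and pass to the limit $\varepsilon\to 0$: the left side converges by uniform convergence in $Y$, and the right side converges by the $L^2$-convergences of $u_\varepsilon$ and $u_\varepsilon'$ in their respective spaces. This yields
\begin{equation*}
    \int_0^T \|u(t)\|_Y^2\,\varphi'(t)\,dt = -2\int_0^T \langle u'(t),u(t)\rangle_{X'\times X}\,\varphi(t)\,dt,
\end{equation*}
which is exactly the claimed identity in $\mathfrak{D}'(0,T)$. The main obstacle I anticipate is ensuring that the extension step preserves both $u\in L^2(X)$ and $u'\in L^2(X')$ consistently, together with the compatibility of the three pairings (in $Y$, in $Y$ realized via $X'\times X$, and under mollification) so that the pointwise identity for smooth functions is genuinely the one needed in the limit; this is where the Gelfand-triple hypothesis is used crucially, and any slippage there would break the argument.
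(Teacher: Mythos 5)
The paper does not prove this lemma at all: it is quoted as a known result, with the proof attributed to Temam (Lemma 1.2, Chapter 3 of \cite{Temam1977Navier-StokesAnalysis}). Your extension--mollification argument is essentially that classical proof — regularize in time, establish the identity $\frac{d}{dt}\|u_\varepsilon\|_Y^2 = 2\langle u_\varepsilon',u_\varepsilon\rangle_{X'\times X}$ for the smooth approximants via the Gelfand-triple compatibility of the pairings, show the family is Cauchy in $C([0,T];Y)$, and pass to the limit in $\mathfrak{D}'(0,T)$ — and it is correct; the only points needing care (that the even reflection of an $H^1$-in-time function produces no boundary contribution in the weak derivative, and the averaging choice of $t_0$ in the Cauchy estimate) are handled adequately and are standard.
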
\medskip

The first of the two lemmas can be found as Corollary 4 of \cite{Simon1986CompactLpOTB}, while the second one is proved in \cite{Temam1977Navier-StokesAnalysis} (see Lemma 1.2 in Chapter 3). \medskip

Finally, we will also make use of another compactness argument in the context of weakly continuous (in time) maps to Banach spaces, the proof of which is in Appendix C of \cite{Lions1996MathematicalMechanics}. This result will be especially useful while upgrading the regularity of the density field, from that obtained by the application of the Aubin-Lions-Simon lemma.

\begin{lem}[Weak-continuity in time] \label{weak-continuity in time}
    Let $X$ be a separable reflexive Banach space such that $X\subset Y$, where $Y'$ (the dual of $Y$) is separable and dense in $X'$. For a time $T\in (0,\infty)$, consider a sequence of functions $\{f_n\}$ such that: \medskip
    
    \begin{enumerate} [(i)]
        \item $\{f_n\}$ are bounded in $L^{\infty}(0,T;X)$,
        \item $f_n \in C([0,T];Y)$, and
        \item $\langle \omega,f_n(t) \rangle_{Y'\times Y}$ is uniformly continuous in $t\in [0,T]$ uniformly in $n$, for all $\omega \in Y'$.
    \end{enumerate}\medskip
    
    Then, $f_n$ is relatively compact in $C_w([0,T];X)$.
\end{lem}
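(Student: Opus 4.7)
The plan is to produce a candidate limit via diagonal extraction on a countable dense subset of $Y'$, use the reflexivity of $X$ and density of $Y'$ in $X'$ to identify the limit as an element of $C_w([0,T];X)$, and then promote pointwise convergence of dual pairings to uniform convergence in $t$.

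First, I fix a countable dense subset $\{\omega_j\}_{j\ge 1}\subset Y'$. For each $j$, the family of real-valued functions $t\mapsto \langle\omega_j,f_n(t)\rangle_{Y'\times Y}$ is equicontinuous on $[0,T]$ by hypothesis (iii), and uniformly bounded (using the continuous embedding $X\subset Y$ together with the $L^\infty(0,T;X)$ bound on $\{f_n\}$). The classical Arzel\`a--Ascoli theorem, combined with a Cantor diagonal extraction, produces a subsequence (not relabeled) along which $\langle\omega_j,f_n(\cdot)\rangle$ converges uniformly on $[0,T]$ for every $j$.

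Next, for each $t\in[0,T]$, I define a linear functional $\ell_t$ on $\mathrm{span}\{\omega_j\}$ via the pointwise limits, and extend to all of $Y'$ by uniform approximation: for $\omega\in Y'$ and $\omega_{j_k}\to\omega$ in $Y'$, the estimate
\begin{equation*}
    |\langle\omega-\omega_{j_k},f_n(t)\rangle_{Y'\times Y}| \le \|\omega-\omega_{j_k}\|_{Y'}\|f_n(t)\|_Y \lesssim \|\omega-\omega_{j_k}\|_{Y'}\sup_n\|f_n\|_{L^\infty_tX}
\end{equation*}
shows that $\langle\omega,f_n(t)\rangle$ is Cauchy in $n$. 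Identifying $\omega\in Y'$ with its restriction to $X$ (which uses the continuous inclusion $Y'\hookrightarrow X'$ induced by $X\hookrightarrow Y$), one gets $|\ell_t(\omega)|\le \|\omega\|_{X'}\sup_n\|f_n\|_{L^\infty_tX}$. Since $Y'$ is dense in $X'$, $\ell_t$ extends uniquely to a bounded functional on $X'$, and the reflexivity of $X$ represents this functional by a unique $f(t)\in X$ with $\langle\xi,f_n(t)\rangle\to\langle\xi,f(t)\rangle$ for every $\xi\in X'$ and every $t\in[0,T]$.

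It remains to upgrade pointwise convergence to uniform-in-$t$ convergence for every $\xi\in X'$, and to verify $f\in C_w([0,T];X)$. For the uniform convergence: given $\xi\in X'$ and $\varepsilon>0$, I pick $\omega_j$ with $\|\xi-\omega_j\|_{X'}$ small, and estimate
\begin{equation*}
    |\langle\xi,f_n(t)-f(t)\rangle| \le |\langle\omega_j,f_n(t)-f(t)\rangle| + \|\xi-\omega_j\|_{X'}\bigl(\|f_n(t)\|_X+\|f(t)\|_X\bigr),
\end{equation*}
where the first term is small uniformly in $t$ by construction, and the second is controlled by the uniform $X$-bound on $f_n$ (and on $f$, which inherits the same bound via weak lower-semicontinuity of the norm). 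The weak continuity $f\in C_w([0,T];X)$ follows by the same three-$\varepsilon$ device, because each $\langle\omega_j,f(\cdot)\rangle$ is continuous as a uniform limit of continuous functions.

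The main obstacle is the transition from pairings against $Y'$ to pairings against $X'$: the equicontinuity hypothesis is formulated only against test functionals in $Y'$, yet both the identification $f(t)\in X$ and the target topology on $C_w([0,T];X)$ require control against all $\xi\in X'$. This transition crucially uses both of the structural hypotheses -- the continuous embedding $X\hookrightarrow Y$ (so $Y'\hookrightarrow X'$) and the density of $Y'$ in $X'$. Without either one, the three-$\varepsilon$ approximation breaks, the limit $f(t)$ fails to land in $X$, and the lemma fails.
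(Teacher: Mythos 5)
Your proof is correct and follows essentially the classical argument (countable dense subset of $Y'$, Arzel\`a--Ascoli with a diagonal extraction, then density of $Y'$ in $X'$ plus reflexivity of $X$ to identify the limit $f(t)\in X$ and to upgrade to uniform-in-$t$ convergence of the pairings); the paper itself gives no proof of this lemma, deferring to Appendix C of \cite{Lions1996MathematicalMechanics}, where exactly this argument appears. The only step you leave implicit is that $f_n(t)\in X$ with $\|f_n(t)\|_X\le \sup_n\|f_n\|_{L^\infty(0,T;X)}$ for \emph{every} $t$ (not merely a.e.), which is needed to make sense of the pairing of $f_n(t)$ against a general $\xi\in X'$ in your three-$\varepsilon$ estimate; it follows from the a.e. bound, the continuity of $f_n$ with values in $Y$, and weak sequential compactness of closed balls in the reflexive space $X$.
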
\bigskip

\section{A priori estimates} \label{a priori estimates}

In this section, we will derive all the required a priori estimates, using formal calculations. We will assume the wavefunction and velocity are smooth functions up to the local existence time (with the first four derivatives of the wavefunction and the first derivative of the velocity vanishing on the boundary), such that the density is bounded below by $\varepsilon>0$.

\begin{rem}[Madelung transformation] \label{quantum mechanics interpretation}
    For completeness, and to make for easier understanding of the labels of ``mass" and ``energy", we would like to point out the following: 
    \begin{enumerate}
        \item By substituting $\psi = A e^{iS}$ (polar form) in the Schr\"odinger equation, we are led to a pair of equations that closely resemble the compressible Navier-Stokes equations, if we identify $A = \sqrt{\frac{\rho_s}{m}}$ and $v = \frac{1}{m}\nabla S$. Here, $\rho_s$ is the density of the superfluid density and $v$ is the superfluid velocity, while $m$ is the mass of the superfluid atom. This is known as the Madelung transformation and the resulting system, the equations of quantum hydrodynamics. This motivates the appearance of $\lVert\nabla\psi\rVert_{L^2_x}^2$ in the energy estimate.
        
        \item The absolute square of the wavefunction is the probability density of finding the excitation of the quantum field (a ``particle") at a given point in space-time. This is known as the Copenhagen interpretation of quantum mechanics. The physical density of the superfluid is thus proportional to the probability density (the constant of proportionality being the mass of the superfluid atom, which we have set to be unity).
    \end{enumerate}
\end{rem}\bigskip

\subsection{Superfluid mass estimate}
Multiplying \eqref{NLS} by $\Bar{\psi}$, taking the real part, and integrating over $\Omega$ gives us:

\begin{equation} \label{superfluid mass estimate}
    \frac{d}{dt} \frac{1}{2}\norm{\psi}_{L^2_x}^2 + \Lambda \int_{\Omega}\Re \Bar{\psi}B\psi = 0
\end{equation}\medskip

The Laplacian term on the RHS of \eqref{NLS} vanishes due to the boundary conditions: 

$$\Im\int_{\Omega}\Bar{\psi}\Delta\psi = \Im\int_{\partial\Omega} \Bar{\psi}\frac{\partial\psi}{\partial n} - \Im\int_{\Omega} \abs{\nabla\psi}^2 = 0$$\medskip 

Using Lemma \ref{coupling symmetric and non-negative}, the second term in \eqref{superfluid mass estimate} is non-negative, so we conclude that the mass of superfluid (using the quantum mechanical interpretation of the wavefunction) is uniformly bounded in time:

\begin{equation} \label{superfluid mass bound}
    \norm{\psi}_{L^2_x}(t) \le \norm{\psi_0}_{L^2_x} \quad a.e. \  t \in [0,T]
\end{equation}\bigskip

\subsection{Energy estimate} \label{energy estimate}

Acting the gradient operator on \eqref{NLS}, multiplying by $\nabla\Bar{\psi}$, and taking the real part gives:

\begin{equation} \label{energy estimate first step}
    \partial_t \abs{\nabla\psi}^2 = -\Im(\nabla\Bar{\psi}\cdot\Delta\nabla\psi) - 2\Lambda\Re(\nabla\Bar{\psi}\cdot\nabla(B\psi)) - 2\mu\nabla\abs{\psi}^2\cdot\Im(\Bar{\psi}\nabla\psi)
\end{equation}\medskip

Integrating over $\Omega$, we notice that the first term on the RHS vanishes upon integration by parts due to the boundary conditions\footnote{Both the normal and tangential derivatives of $\psi$ are zero on the boundary, the latter because $\psi$ is zero on a smooth boundary.}. 

$$\Im\int_{\Omega}\nabla\Bar{\psi}\Delta\nabla\psi = \Im\int_{\partial\Omega} \nabla\Bar{\psi}\nabla\frac{\partial\psi}{\partial n} - \Im\int_{\Omega} \abs{\nabla\nabla\psi}^2 = 0 $$\medskip

The second term on the RHS of \eqref{energy estimate first step} is similarly integrated by parts to yield:

\begin{equation} \label{nabla psi norm equation}
    \frac{d}{dt}\frac{1}{2}\norm{\nabla\psi}_{L^2_x}^2 = \Lambda\Re\int_{\Omega}\Delta\Bar{\psi}B\psi - \mu\Im\int_{\Omega}\nabla\abs{\psi}^2\cdot\Bar{\psi}\nabla\psi
\end{equation}\medskip

Now, we rewrite the first term on the RHS by replacing\footnote{This trick will be used again for deriving the higher-order a priori estimates.} the Laplacian in terms of the $B$ operator, giving us a dissipative contribution to the energy estimate. Thus,

\begin{align} 
    \Lambda\Re\int_{\Omega}\Delta\Bar{\psi}B\psi &= -2\Lambda\Re\int_{\Omega} \left( -\frac{1}{2}\Delta\Bar{\psi} \right)B\psi \notag \\
    &= -2\Lambda\Re\int_{\Omega} \left( \overline{B\psi} - \frac{1}{2}\abs{u}^2\Bar{\psi} + iu\cdot\nabla\Bar{\psi} - \mu\abs{\psi}^2\Bar{\psi} \right)B\psi \notag \\
    &= \!\begin{multlined}[t]
    -2\Lambda\norm{B\psi}_{L^2_x}^2 + \Lambda\int_{\Omega}\abs{u}^2\Re(\Bar{\psi}B\psi) + 2\Lambda\int_{\Omega}u\cdot\Im(\nabla\Bar{\psi}B\psi) \\ + 2\mu\Lambda\int_{\Omega}\abs{\psi}^2\Re(\Bar{\psi}B\psi)
    \end{multlined} \label{exchanging derivatives for B}
\end{align}\medskip

We have chosen the \eqref{NLS} to have a cubic nonlinearity, and this contributes a quartic term to the (potential) energy. Multiply \eqref{NLS} by $\Bar{\psi}$ and take the real part to obtain:

\begin{equation*}
    \partial_t \abs{\psi}^2 + \nabla\cdot\Im(\Bar{\psi}\nabla\psi) = -2\Lambda\Re(\Bar{\psi}B\psi)
\end{equation*}\medskip

Multiplying the above equation with $\mu\abs{\psi}^2$ and integrating over $\Omega$ leads to:

\begin{equation} \label{cubic nonlinearity potential energy}
    \frac{d}{dt}\frac{\mu}{2}\norm{\psi}_{L^4_x}^4 - \mu\int_{\Omega}\nabla\abs{\psi}^2\cdot\Im(\Bar{\psi}\nabla\psi) = -2\mu\Lambda\int_{\Omega}\abs{\psi}^2\Re(\Bar{\psi}B\psi)
\end{equation}\medskip

Combining \eqref{nabla psi norm equation}, \eqref{exchanging derivatives for B} and \eqref{cubic nonlinearity potential energy} gives the energy equation for the superfluid phase.

\begin{equation} \label{superfluid energy equation}
    \frac{d}{dt}\left( \frac{1}{2}\norm{\nabla\psi}_{L^2_x}^2 + \frac{\mu}{2}\norm{\psi}_{L^4_x}^4 \right) + 2\Lambda\norm{B\psi}_{L^2_x}^2 = \Lambda\int_{\Omega}\abs{u}^2\Re(\Bar{\psi}B\psi) + 2\Lambda\int_{\Omega}u\cdot\Im(\nabla\Bar{\psi}B\psi)
\end{equation}\medskip

Now, to cancel the terms on the RHS, we need to include the energy equation for the normal fluid. To achieve this, we first rewrite \eqref{NSE} in the \textit{non-conservative form} (see Remark \ref{gradient terms in NSE}).

\begin{equation}  \label{NSE'}
    \rho\partial_t u + \rho u\cdot\nabla u + \nabla \Tilde{p} - \nu \Delta u = -2\Lambda \Im(\nabla \Bar{\psi}B\psi) - 2\Lambda u\Re(\Bar{\psi}B\psi) \tag{NSE'}
\end{equation}\medskip

Taking the inner product of both \eqref{NSE} and \eqref{NSE'} with $u$, using incompressibility, and adding them, we arrive at the energy equation for the normal fluid.

\begin{equation} \label{normal fluid energy equation}
    \frac{d}{dt}\frac{1}{2}\norm{\sqrt{\rho}u}_{L^2_x}^2 + \nu\norm{\nabla u}_{L^2_x}^2 = - 2\Lambda\int_{\Omega}u\cdot\Im(\nabla \Bar{\psi}B\psi) - \Lambda\int_{\Omega}\abs{u}^2\Re(\Bar{\psi}B\psi)
\end{equation}\medskip

Therefore, by adding \eqref{superfluid energy equation} and \eqref{normal fluid energy equation}, we obtain the energy equation for the Pitaevskii model.

\begin{equation} \label{Energy equation}
    \frac{d}{dt}\left( \frac{1}{2}\norm{\sqrt{\rho}u}_{L^2_x}^2 + \frac{1}{2}\norm{\nabla\psi}_{L^2_x}^2 + \frac{\mu}{2}\norm{\psi}_{L^4_x}^4 \right) + \nu\norm{\nabla u}_{L^2_x}^2 + 2\Lambda\norm{B\psi}_{L^2_x}^2 = 0
\end{equation}\medskip

Integrating over $[0,T]$,

\begin{multline} \label{energy bound E0}
    \left( \frac{1}{2}\norm{\sqrt{\rho}u}_{L^2_x}^2 + \frac{1}{2}\norm{\nabla\psi}_{L^2_x}^2 + \frac{\mu}{2}\norm{\psi}_{L^4_x}^4 \right)(t) + \nu\norm{\nabla u}_{L^2_{[0,t]}L^2_x}^2 + 2\Lambda\norm{B\psi}_{L^2_{[0,t]}L^2_x}^2 \\ = E_0 \quad a.e. \ t\in [0,T]
\end{multline}\medskip

\noindent where the $E_0$ is the initial energy of the system, defined as

\begin{equation} \label{E0 definition}
    E_0 := \frac{1}{2}\norm{\sqrt{\rho_0}u_0}_{L^2_x}^2 + \frac{1}{2}\norm{\nabla\psi_0}_{L^2_x}^2 + \frac{\mu}{2}\norm{\psi_0}_{L^4_x}^4
\end{equation}\medskip

Since we have assumed that the density is bounded both above and below, the energy equation implies:

\begin{equation} \label{energy equation inclusion implication}
    u \in L^{\infty}_{[0,T]}L^2_{d,x} \cap L^2_{[0,T]}\dot{H}^1_{d,x} \quad , \quad \psi\in L^{\infty}_{[0,T]}\dot{H}^1_{0,x} \cap L^{\infty}_{[0,T]}L^4_x \quad , \quad B\psi\in L^2_{[0,T]}L^2_x
\end{equation}\medskip

\subsubsection{What does ``$B\psi\in L^2_{[0,T]}L^2_x$" imply for $\psi$?} \label{B psi is in L^2}

The coupling is a second order differential operator, so $B\psi$ being square-integrable should intuitively mean that $\psi\in H^2_x$. We will confirm this with a simple calculation. From \eqref{coupling},

\begin{align}
    -\frac{1}{2}\Delta\psi &= B\psi - iu\cdot\nabla\psi - \frac{1}{2}\abs{u}^2\psi - \mu\abs{\psi}^2\psi \notag \\
    \Rightarrow \norm{\Delta\psi}_{L^2_x} &\lesssim \norm{B\psi}_{L^2_x} + \norm{u\cdot\nabla\psi}_{L^2_x} + \norm{\abs{u}^2\psi}_{L^2_x} + \mu\norm{\abs{\psi}^2\psi}_{L^2_x} \label{B psi what does it mean}
\end{align}\medskip

Now, using Lemma \ref{poincare inequality}, the LHS is equivalent to $\norm{\psi}_{H^2_x}$. We will the last three terms on the RHS, based on \eqref{energy equation inclusion implication}.

\begin{align*}
    \norm{u\cdot\nabla\psi}_{L^2_x} &\lesssim \norm{u}_{L^6_x} \norm{\nabla\psi}_{L^3_x} \tag{H\"older} \\
    &\lesssim \norm{u}_{H^1_x} \norm{\nabla\psi}_{L^2_x}^{\frac{1}{2}} \norm{\nabla\psi}_{L^6_x}^{\frac{1}{2}} \tag{Lemma \ref{sobolev embeddings} + interpolation} \\
    &\lesssim \norm{u}_{H^1_x} \norm{\psi}_{H^1_x}^{\frac{1}{2}} \norm{\psi}_{H^2_x}^{\frac{1}{2}} \tag{Lemma \ref{sobolev embeddings}} \\
    &\lesssim \frac{1}{c}\norm{u}_{H^1_x}^2 \norm{\psi}_{H^1_x} + c\norm{\psi}_{H^2_x} \tag{Young's inequality}
\end{align*}\medskip

The constant $c$ is chosen to be small enough that the second term in the final step can be absorbed into the LHS of \eqref{B psi what does it mean}. Similarly, using H\"older's inequality and Sobolev embedding,

\begin{gather*}
    \norm{\abs{u}^2\psi}_{L^2_x} \lesssim \norm{u}_{L^6_x}^2 \norm{\psi}_{L^6_x} \lesssim \norm{u}_{H^1_x}^2 \norm{\psi}_{H^1_x} \\
    \norm{\abs{\psi}^2\psi}_{L^2_x} \lesssim \norm{\psi}_{L^6_x}^3 \lesssim \norm{\psi}_{H^1_x}^3
\end{gather*}\medskip

Substituting these into \eqref{B psi what does it mean}, and integrating over $[0,T]$,

\begin{equation*}
    \norm{\psi}_{L^1_{[0,T]}H^2_x} \lesssim T^{\frac{1}{2}}\norm{B\psi}_{L^2_{[0,T]}L^2_x} + \norm{u}_{L^2_{[0,T]}H^1_x}^2 \norm{\psi}_{L^{\infty}_{[0,T]}H^1_x} + \mu T \norm{\psi}_{L^{\infty}_{[0,T]}H^1_x}^3 < \infty
\end{equation*}\medskip

Each of the terms on the RHS is finite, according to \eqref{energy equation inclusion implication}. Thus, given the a priori estimates, $B\psi\in L^2_{[0,T]}L^2_x\Rightarrow \psi\in L^1_{[0,T]}H^2_x$. \medskip

A further consequence of this is that the time derivative of the wavefunction can be bounded in a suitable space, permitting the use of the Aubin-Lions-Simon lemma. From \eqref{NLS},

\begin{gather*}
    \norm{\partial_t \psi}_{L^1_{[0,T]}L^2_x} \lesssim \Lambda T^{\frac{1}{2}}\norm{B\psi}_{L^2_{[0,T]}L^2_x} + \norm{\psi}_{L^1_{[0,T]}H^2_x} + \mu T \norm{\psi}_{L^{\infty}_{[0,T]}H^1_x}^3 < \infty \\
    \Rightarrow \norm{\psi}_{L^2_{[0,T]}H^1_x} + \norm{\psi}_{L^1_{[0,T]}H^2_x} + \norm{\partial_t \psi}_{L^1_{[0,T]}L^2_x} < \infty
\end{gather*}\medskip

Thus, from Lemma \ref{aubin-lions}, we can conclude that a sequence of wavefunctions that satisfy the above finiteness condition contain a subsequence that converges strongly in both $L^1_{[0,T]} H^{[0,2)}_{0,x} \cap L^2_{[0,T]} H^{[0,1)}_{0,x} \cap L^2_{[0,T]} L^{[1,6)}_x$. \bigskip

\subsection{Higher-order ``energy" estimate} \label{higher order estimate}

In this subsection, we will utilize the approach in \cite{Kim1987WeakDensity} to derive a higher-order a priori estimate, involving all the three fields $(\psi,u,\rho)$.

\subsubsection{The Schr\"odinger equation} \label{NLS higher order estimate}
Similar to the derivation of the energy equation, we act upon \eqref{NLS} with the Laplacian $\Delta$, multiply by $\Delta\Bar{\psi}$, take the real part and integrate over the domain:

\begin{equation} \label{schrodinger equation higher order first step}
    \frac{d}{dt}\frac{1}{2}\norm{\Delta\psi}_{L^2_x}^2 = - \Lambda\Re\int_{\Omega}(\Delta^2\Bar{\psi})B\psi + \mu\Im\int_{\Omega}(\Delta^2\Bar{\psi})\abs{\psi}^2\psi
\end{equation}\medskip

The first term on the RHS of \eqref{NLS} vanishes as a result of the boundary conditions on $\psi$ (namely, that all tangential derivatives vanish due to the smooth boundary, and that the first three normal derivatives are zero):

$$\Im\int_{\Omega} \Delta\Bar{\psi} \Delta^2\psi = \Im\int_{\partial\Omega} \Delta\Bar{\psi} \Delta\nabla\frac{\partial\psi}{\partial_n} - \Im\int_{\Omega} \abs{\Delta\nabla\Bar{\psi}}^2 = 0$$\medskip

As done before, we will express the Laplacian in \eqref{schrodinger equation higher order first step} in terms of the $B$ operator to obtain a non-negative term and estimate the RHS.\medskip

\begin{enumerate}
    \item \begin{align*}
        -\Lambda\Re\int_{\Omega}(\Delta^2\Bar{\psi})B\psi &= \Lambda\Re\int_{\Omega}\nabla(\Delta\Bar{\psi})\cdot\nabla(B\psi) \\
        &= -2\Lambda\Re\int_{\Omega}\nabla\left[\overline{B\psi} + iu\cdot\nabla\Bar{\psi} - \frac{1}{2}\abs{u}^2\Bar{\psi} - \mu\abs{\psi}^2\Bar{\psi}\right]\cdot\nabla(B\psi) \\
        &= \begin{multlined}[t] -2\Lambda\norm{\nabla(B\psi)}_{L^2_x}^2 + 2\Lambda\Im\int_{\Omega}\nabla(u\cdot\nabla\Bar{\psi})\cdot\nabla(B\psi) \\ + 2\Lambda\Re\int_{\Omega}\nabla(\frac{1}{2}\abs{u}^2\Bar{\psi})\cdot\nabla(B\psi) \\ + 2\Lambda\mu\Re\int_{\Omega} \nabla(\abs{\psi}^2\Bar{\psi})\cdot\nabla(B\psi) \end{multlined}
    \end{align*}\medskip
    
    \item \begin{align*}
        \mu\Im\int_{\Omega}(\Delta^2\Bar{\psi})\abs{\psi}^2\psi &= -\mu\Im\int_{\Omega}\nabla(\Delta\Bar{\psi})\cdot\nabla(\abs{\psi}^2\psi) \\
        &= 2\mu\Im\int_{\Omega} \nabla\left[ \overline{B\psi} + iu\cdot\nabla\Bar{\psi} - \frac{1}{2}\abs{u}^2\Bar{\psi} - \mu\abs{\psi}^2\Bar{\psi} \right]\cdot\nabla(\abs{\psi}^2\psi) \\
        &= \begin{multlined}[t] 2\mu\Im\int_{\Omega} \nabla\left( \overline{B\psi} \right)\cdot\nabla(\abs{\psi}^2\psi) + 2\mu\Re\int_{\Omega} \nabla\left( u\cdot\nabla\Bar{\psi} \right)\cdot\nabla(\abs{\psi}^2\psi) \\ - 2\mu\Im\int_{\Omega} \nabla\left(\frac{1}{2}\abs{u}^2\Bar{\psi} \right)\cdot\nabla(\abs{\psi}^2\psi) \end{multlined}
    \end{align*}\medskip
\end{enumerate}

Thus, \eqref{schrodinger equation higher order first step} becomes:

\begin{equation} \label{schrodinger equation higher order second step}
    \frac{d}{dt}\frac{1}{2}\norm{\Delta\psi}_{L^2_x}^2 + 2\Lambda\norm{\nabla(B\psi)}_{L^2_x}^2 \begin{multlined}[t] = 2\Lambda\Im\int_{\Omega}\nabla(u\cdot\nabla\Bar{\psi})\cdot\nabla(B\psi) \\ + 2\Lambda\Re\int_{\Omega}\nabla\left(\frac{1}{2}\abs{u}^2\Bar{\psi}\right)\cdot\nabla(B\psi) \\ + 2\Lambda\mu\Re\int_{\Omega} \nabla(\abs{\psi}^2\Bar{\psi})\cdot\nabla(B\psi) \\ + 2\mu\Im\int_{\Omega} \nabla\left( \overline{B\psi} \right)\cdot\nabla(\abs{\psi}^2\psi) \\ + 2\mu\Re\int_{\Omega} \nabla\left( u\cdot\nabla\Bar{\psi} \right)\cdot\nabla(\abs{\psi}^2\psi) \\ - 2\mu\Im\int_{\Omega} \nabla\left(\frac{1}{2}\abs{u}^2\Bar{\psi} \right)\cdot\nabla(\abs{\psi}^2\psi) \end{multlined}
\end{equation}\medskip

We can now repeatedly use H\"older's and Young's inequalities to extract out $\norm{\nabla(B\psi)}_{L^2_x}^2$ from each of the first four terms on the RHS (with small enough constants in front to be able to be absorbed into the dissipation term on the LHS). We use the same inequalities with the last two terms. Combining all this, we end up with:

\begin{equation} \label{schrodinger equation higher order third step}
    \frac{d}{dt}\norm{\Delta\psi}_{L^2_x}^2 + \Lambda\norm{\nabla(B\psi)}_{L^2_x}^2 \lesssim \Lambda\norm{\nabla(u\cdot\nabla\psi)}_{L^2_x}^2 + \Lambda\norm{\nabla\left( \frac{1}{2}\abs{u}^2\psi \right)}_{L^2_x}^2 + \gamma\norm{\abs{\psi}^2\psi}_{L^2_x}^2
\end{equation}\medskip

\noindent where $\gamma := \mu^2\left(\Lambda+\frac{1}{\Lambda}\right)$.\medskip

Each of the three terms on the RHS has to be estimated.

\begin{enumerate}
    \item \begin{align*}
        \norm{\nabla(u\cdot\nabla\psi)}_{L^2_x}^2 &\lesssim \norm{\nabla u \cdot\nabla\psi}_{L^2_x}^2 + \norm{u \cdot\nabla\nabla\psi}_{L^2_x}^2 \\
        &\lesssim \norm{\nabla u}_{L^3_x}^2 \norm{\nabla\psi}_{L^6_x}^2 + \norm{u}_{L^{\infty}_x}^2 \norm{\Delta\psi}_{L^2_x}^2 \tag{H\"older + Lemma \ref{poincare inequality}} \\
        &\lesssim \norm{\nabla u}_{L^2_x}\norm{\Delta u}_{L^2_x} \norm{\Delta\psi}_{L^2_x}^2 + \norm{\nabla u}_{L^2_x}^{\frac{1}{2}}\norm{\Delta u}_{L^2_x}^{\frac{3}{2}} \norm{\Delta\psi}_{L^2_x}^2 \tag{Lemma \ref{sobolev embeddings} + Interpolation + Lemma \ref{poincare inequality}}
    \end{align*}\medskip
    
    (The term $\norm{u}_{L^{\infty}_x}$ is bounded above by $\norm{u}_{H^{\frac{7}{4}}_x}$ using Lemma \ref{sobolev embeddings}, which is in turn interpolated between $H^1_x$ and $H^2_x$.) \medskip
    
    \item We will use the property of a 3D vector field, not necessarily divergence-free:
    
    \begin{equation*}
        \nabla\left( \frac{1}{2}\abs{u}^2 \right) = u\cdot\nabla u - \omega\times u
    \end{equation*}\medskip
    
    \noindent where $\omega = \nabla\times u$ is the vorticity. We will also note that $\norm{\omega}_{L^p_x}\le\norm{\nabla u}_{L^p_x}$ for all $1\le p\le\infty$, thus rendering the second term of the RHS in the above inequality equivalent to the first (insofar as $L^p_x$ norms are concerned).
    
    \begin{align*}
        \norm{\nabla\left( \frac{1}{2}\abs{u}^2\psi \right)}_{L^2_x}^2 &\lesssim \norm{\nabla\left( \frac{1}{2}\abs{u}^2\right)\psi}_{L^2_x}^2 + \norm{\frac{1}{2}\abs{u}^2\nabla\psi}_{L^2_x}^2 \\
        &\lesssim \norm{u}_{L^6_x}^2\norm{\nabla u}_{L^3_x}^2\norm{\psi}_{L^{\infty}_x}^2 + \norm{u}_{L^6_x}^4\norm{\nabla\psi}_{L^6_x}^2 \tag{H\"older} \\
        &\lesssim \norm{\nabla u}_{L^2_x}^3\norm{\Delta u}_{L^2_x}\norm{\Delta\psi}_{L^2_x}^2 + \norm{\nabla u}_{L^2_x}^4\norm{\Delta\psi}_{L^2_x}^2 \tag{Lemma \ref{sobolev embeddings} + Interpolation}
    \end{align*}\medskip

    \item \begin{align*}
        \norm{\nabla(\abs{\psi}^2\psi)}_{L^2_x}^2 \lesssim \norm{\psi}_{L^6_x}^4 \norm{\nabla\psi}_{L^6_x}^2 \lesssim \norm{\nabla\psi}_{L^2_x}^4 \norm{\Delta\psi}_{L^2_x}^2
    \end{align*}\medskip
\end{enumerate}

Combining all these into \eqref{schrodinger equation higher order third step} results in:

\begin{multline} \label{schrodinger equation higher order fourth step}
    \frac{d}{dt}\norm{\Delta\psi}_{L^2_x}^2 + \norm{\nabla(B\psi)}_{L^2_x}^2 \lesssim \Lambda\left[\norm{\nabla u}_{L^2_x}\norm{\Delta u}_{L^2_x} + \norm{\nabla u}_{L^2_x}^{\frac{1}{2}} \norm{\Delta u}_{L^2_x}^{\frac{3}{2}}\right. \\ \left. + \norm{\nabla u}_{L^2_x}^3 \norm{\Delta u}_{L^2_x} + \norm{\nabla u}_{L^2_x}^4 + \frac{\gamma}{\Lambda}\norm{\nabla\psi}_{L^2_x}^4 \right]\norm{\Delta\psi}_{L^2_x}^2
\end{multline}

Now, we will use Young's inequality to extract out $\norm{\Delta u}_{L^2_x}^2$ with a certain sufficiently small coefficient (whose choice will be justified later in this subsection). We also recall from \eqref{energy bound E0} that $\norm{\nabla\psi}_{L^2_x} \le 2E_0 \ a.e. \ t\in [0,T]$. In the following, recall that the density is bounded below by $\varepsilon$ and above by $M+m-\varepsilon$ (see discussion following Definition \ref{local existence time definition}). Let us refer to this upper bound on the density as $M'$, for brevity.

\begin{align} \label{delta psi equation}
    \frac{d}{dt}\norm{\Delta\psi}_{L^2_x}^2 + \Lambda\norm{\nabla(B\psi)}_{L^2_x}^2 \lesssim &\frac{\Lambda^2 M'}{\nu^2}\norm{\nabla u}_{L^2_x}^2\norm{\Delta\psi}_{L^2_x}^4 + \frac{\Lambda^4 {M'}^{3}}{\nu^6}\norm{\nabla u}_{L^2_x}^2\norm{\Delta\psi}_{L^2_x}^8 \notag \\ 
    &+ \frac{\Lambda^2 M'}{\nu^2}\norm{\nabla u}_{L^2_x}^6 \norm{\Delta\psi}_{L^2_x}^4 + \Lambda\norm{\nabla u}_{L^2_x}^4\norm{\Delta\psi}_{L^2_x}^2 \notag \\
    &+ \gamma E_0^4 \norm{\Delta\psi}_{L^2_x}^2 + \frac{\nu^2}{CM'} \norm{\Delta u}_{L^2_x}^2
\end{align}\medskip

(The $C$ in the denominator of the last term is a large constant, and simply to ensure the term is small enough to be absorbed into a similar dissipative expression on the LHS that will appear from the normal fluid's estimates.) \medskip

\subsubsection{The Navier-Stokes equation} \label{NSE higher order estimate}

We will now follow the approach in \cite{Kim1987WeakDensity} to derive a higher order estimate for the velocity field, which will be combined with \eqref{delta psi equation} to arrive at a Gr\"onwall inequality argument. Starting with \eqref{NSE'}, we first multiply it by $\partial_t u$ and integrate over the domain:

\begin{multline} \label{NSE higher order first step}
    \int_{\Omega}\rho\abs{\partial_t u}^2 + \frac{\nu}{2}\frac{d}{dt}\norm{\nabla u}_{L^2_x}^2 = -\int_{\Omega}\rho u\cdot\nabla u\cdot\partial_t u - 2\Lambda\int_{\Omega}\partial_t u\cdot\Im(\nabla\Bar{\psi}B\psi) \\ - 2\Lambda\int_{\Omega}\partial_t u\cdot u\Re(\Bar{\psi}B\psi)
\end{multline}\medskip

The three terms on the RHS are estimated as follows. Recall that $\varepsilon$ is the lower bound on the density, and $M'=M+m-\varepsilon$ is the upper bound.\medskip

\begin{enumerate}
    \item \begin{equation*}
        -\int_{\Omega}\rho u\cdot\nabla u\cdot\partial_t u \le \frac{1}{6}\int_{\Omega}\rho\abs{\partial_t u}^2 + \frac{3}{2}M' \int_{\Omega}\abs{u}^2\abs{\nabla u}^2
    \end{equation*}\medskip
    
    The last term is bounded using Lemma \ref{sobolev embeddings} and Sobolev interpolation: $\norm{u}_{L^{\infty}_x} \lesssim \norm{u}_{H^{\frac{7}{4}}_x} \lesssim \norm{u}_{H^1_x}^{\frac{1}{4}}\norm{u}_{H^2_x}^{\frac{3}{4}}$. Thus,
    
    \begin{equation} \label{example of bounding u}
        \int_{\Omega}\abs{u}^2\abs{\nabla u}^2 \lesssim \norm{\nabla u}_{L^2_x}^{\frac{5}{2}}\norm{\Delta u}_{L^2_x}^{\frac{3}{2}}
    \end{equation}\medskip
    
    \item \begin{equation*}
        - 2\Lambda\int_{\Omega}\partial_t u\cdot\Im(\nabla\Bar{\psi}B\psi) \le \frac{1}{6}\int_{\Omega}\rho\abs{\partial_t u}^2 + \frac{6\Lambda^2}{\varepsilon}\norm{\nabla\psi}_{L^6_x}^2\norm{B\psi}_{L^3_x}^2
    \end{equation*}\medskip
    
    The $B\psi$ term is handled via interpolation, while the $\nabla\psi$ term is bounded using Sobolev embedding. We finally use Young's inequality to extract the dissipative term, with a sufficiently small coefficient ($C$ is sufficiently large).
    
    \begin{equation*}
        \norm{\nabla\psi}_{L^6_x}^2\norm{B\psi}_{L^3_x}^2 \lesssim \frac{C\Lambda}{\varepsilon}\norm{B\psi}_{L^2_x}^2\norm{\Delta\psi}_{L^2_x}^4 + \frac{\varepsilon}{C\Lambda}\norm{\nabla(B\psi)}_{L^2_x}^2
    \end{equation*}\medskip
    
    \item \begin{equation*}
        - 2\Lambda\int_{\Omega}\partial_t u\cdot u\Re(\Bar{\psi}B\psi) \le \frac{1}{6}\int_{\Omega}\rho\abs{\partial_t u}^2 + \frac{6\Lambda^2}{\varepsilon}\norm{u}_{L^6_x}^2\norm{\psi}_{L^{\infty}_x}^2\norm{B\psi}_{L^3_x}^2
    \end{equation*}\medskip

    Just as in the previous case,
    
    \begin{equation*}
        \norm{u}_{L^6_x}^2\norm{\psi}_{L^{\infty}_x}^2\norm{B\psi}_{L^3_x}^2 \lesssim \frac{C\Lambda}{\varepsilon}\norm{B\psi}_{L^2_x}^2\norm{\nabla u}_{L^2_x}^4\norm{\Delta\psi}_{L^2_x}^4 + \frac{\varepsilon}{C\Lambda}\norm{\nabla(B\psi)}_{L^2_x}^2
    \end{equation*}\medskip
\end{enumerate}

Substituting the above estimates into \eqref{NSE higher order first step},

\begin{multline} \label{NSE higher order second step}
    \frac{\nu}{2}\frac{d}{dt}\norm{\nabla u}_{L^2_x}^2 + \frac{1}{2}\int_{\Omega}\rho\abs{\partial_t u}^2 \lesssim M' \norm{\nabla u}_{L^2_x}^{\frac{5}{2}}\norm{\Delta u}_{L^2_x}^{\frac{3}{2}} + \frac{\Lambda}{C}\norm{\nabla(B\psi)}_{L^2_x}^2 \\ + \frac{\Lambda^3}{\varepsilon^2}\norm{B\psi}_{L^2_x}^2\left( 1 + \norm{\nabla u}_{L^2_x}^4 \right)\norm{\Delta\psi}_{L^2_x}^4
\end{multline}\medskip

Having obtained equations for the rate of change of $\norm{\nabla u}_{L^2_x}$ and $\norm{\Delta\psi}_{L^2_x}$, what remains is to consider the ``higher-order dissipative" term $\norm{\Delta u}_{L^2_x}^2$ in these estimates. Having this on the LHS will allow us to absorb such terms from the RHS, and set up the required Gr\"onwall inequality. (Note that the higer-order dissipative term for the wavefunction is $\norm{\nabla(B\psi)}_{L^2_x}^2$, which is present in \eqref{delta psi equation}.) To this end, we multiply \eqref{NSE'} by $-\theta\Delta u$ and integrate over the domain, where $\theta$ is a positive constant whose value will be fixed shortly.

\begin{multline} \label{NSE higher order third step}
    \theta\nu\norm{\Delta u}_{L^2_x}^2 = \theta\int_{\Omega}\rho\partial_t u\cdot\Delta u + \theta\int_{\Omega}\rho u\cdot\nabla u\cdot\Delta u + 2\Lambda\theta\int_{\Omega}\Im(\nabla\Bar{\psi}B\psi)\cdot\Delta u \\ + 2\Lambda\theta\int_{\Omega}u\Re(\Bar{\psi}B\psi)\cdot\Delta u
\end{multline}\medskip

Once again, we estimate each term on the RHS.

\begin{enumerate}
    \item \begin{equation*}
        \theta\int_{\Omega}\rho\partial_t u\cdot\Delta u \le \frac{\theta\nu}{8}\norm{\Delta u}_{L^2_x}^2 + \frac{2\theta M'}{\nu}\int_{\Omega}\rho\abs{\partial_t u}^2
    \end{equation*}\medskip
    
    \item \begin{equation*}
        \theta\int_{\Omega}\rho u\cdot\nabla u\cdot\Delta u \le \frac{\theta\nu}{8}\norm{\Delta u}_{L^2_x}^2 + \frac{2\theta {M'}^2}{\nu}\int_{\Omega}\abs{u}^2\abs{\nabla u}^2
    \end{equation*}\medskip
    
    The last term is manipulated just as in \eqref{example of bounding u}. \medskip

    \item \begin{equation*}
        2\Lambda\theta\int_{\Omega}\Im(\nabla\Bar{\psi}B\psi)\cdot\Delta u \le \frac{\theta\nu}{8}\norm{\Delta u}_{L^2_x}^2 + \frac{8\Lambda^2\theta}{\nu}\norm{\nabla\psi}_{L^2_x}^2\norm{B\psi}_{L^2_x}^2
    \end{equation*}\medskip
    
    \item \begin{equation*}
        2\Lambda\theta\int_{\Omega}u\Re(\Bar{\psi}B\psi)\cdot\Delta u \le \frac{\theta\nu}{8}\norm{\Delta u}_{L^2_x}^2 + \frac{8\Lambda^2\theta}{\nu}\norm{u}_{L^6_x}^2\norm{\psi}_{L^{\infty}_x}^2\norm{B\psi}_{L^2_x}^2
    \end{equation*}\medskip
\end{enumerate}

Thus, \eqref{NSE higher order third step} becomes ($c$ is a positive constant that depends only on $\Omega$):

\begin{multline} \label{NSE higher order fourth step}
    \frac{\theta\nu}{2}\norm{\Delta u}_{L^2_x}^2 \le \frac{2\theta M'}{\nu}\int_{\Omega}\rho\abs{\partial_t u}^2 + \frac{2\theta {M'}^2 c}{\nu}\norm{\nabla u}_{L^2_x}^{\frac{5}{2}}\norm{\Delta u}_{L^2_x}^{\frac{3}{2}} + \frac{\Lambda}{2}\norm{\nabla(B\psi)}_{L^2_x}^2 \\ + \frac{\Lambda^3\theta^2 c}{\nu^2}\norm{B\psi}_{L^2_x}^2\left(1+\norm{\nabla u}_{L^2_x}^4 \right)\norm{\Delta\psi}_{L^2_x}^4
\end{multline}\medskip

We now add \eqref{NSE higher order second step} and \eqref{NSE higher order fourth step}. Choosing $\theta = \frac{\nu}{8M'}$, and extracting $\norm{\Delta u}_{L^2_x}^2$ with sufficiently small coefficients, we absorb into the corresponding term on the LHS. Finally, what remains is:

\begin{multline} \label{nabla u equation}
    \frac{\nu}{2}\frac{d}{dt}\norm{\nabla u}_{L^2_x}^2 + \frac{1}{4}\norm{\sqrt{\rho}\partial_t u}_{L^2_x}^2 + \frac{\nu^2}{M'}\norm{\Delta u}_{L^2_x}^2 \lesssim \frac{{M'}^7}{\nu^6}\norm{\nabla u}_{L^2_x}^{10} + \frac{\Lambda}{2}\norm{\nabla(B\psi)}_{L^2_x}^2 \\ + \frac{\Lambda^3}{\varepsilon^2}\norm{B\psi}_{L^2_x}^2\left( 1+\norm{\nabla u}_{L^2_x}^4 \right)\norm{\Delta\psi}_{L^2_x}^4
\end{multline}\medskip

\subsubsection{The Gr\"onwall inequality step} \label{gronwall inequality step for higher order estimate}

Having derived the equations for the higher-order norms of $u$ and $\psi$, and also accounted for the relevant dissipative terms, we now add \eqref{delta psi equation} and \eqref{nabla u equation}:

\begin{multline} \label{delta psi nabla u equations combined}
    \frac{d}{dt}\left[ \norm{\Delta\psi}_{L^2_x}^2 + \nu\norm{\nabla u}_{L^2_x}^2 \right] + \Lambda\norm{\nabla(B\psi)}_{L^2_x}^2 + \norm{\sqrt{\rho}\partial_t u}_{L^2_x}^2 + \frac{\nu^2}{M'}\norm{\Delta u}_{L^2_x}^2 \\ \lesssim \left[ \frac{\Lambda^2 M'}{\nu^2}\norm{\nabla u}_{L^2_x}^2\norm{\Delta\psi}_{L^2_x}^4 + \frac{\Lambda^4 {M'}^{3}}{\nu^6}\norm{\nabla u}_{L^2_x}^2\norm{\Delta\psi}_{L^2_x}^8  
    + \frac{\Lambda^2 M'}{\nu^2}\norm{\nabla u}_{L^2_x}^6 \norm{\Delta\psi}_{L^2_x}^4 \right. \\  + \Lambda\norm{\nabla u}_{L^2_x}^4\norm{\Delta\psi}_{L^2_x}^2
    + \gamma E_0^4 \norm{\Delta\psi}_{L^2_x}^2 \Bigg] \\ + \left[ \frac{{M'}^7}{\nu^6}\norm{\nabla u}_{L^2_x}^{10} + \frac{\Lambda^3}{\varepsilon^2}\norm{B\psi}_{L^2_x}^2\left( 1+\norm{\nabla u}_{L^2_x}^4 \right)\norm{\Delta\psi}_{L^2_x}^4 \right]
\end{multline}\medskip

Denoting 
\begin{gather*}
    X = 1 + \norm{\Delta\psi}_{L^2_x}^2 + \nu\norm{\nabla u}_{L^2_x}^2 \\
    Y = \Lambda\norm{\nabla(B\psi)}_{L^2_x}^2 + \norm{\sqrt{\rho}\partial_t u}_{L^2_x}^2 + \frac{\nu^2}{M'}\norm{\Delta u}_{L^2_x}^2
\end{gather*}\medskip

\noindent we can rewrite \eqref{delta psi nabla u equations combined} as follows:

\begin{multline} \label{gronwall expanded form}
    \frac{dX}{dt} + Y \lesssim \left[ \frac{\Lambda^2 M'}{\nu^3}X^3 + \frac{\Lambda^4 {M'}^{3}}{\nu^7}X^5
    + \frac{\Lambda^2 M'}{\nu^5}X^5 + \frac{\Lambda}{\nu^2}X^3
    + \gamma E_0^4 X \right] \\ + \left[ \frac{{M'}^7}{\nu^{11}}X^5 + \frac{\Lambda^3}{\varepsilon^2\nu^2}\norm{B\psi}_{L^2_x}^2\left( \nu^2+X^2 \right)X^2 \right]
\end{multline}\medskip

Since $X\ge 1$, we set: 

\begin{equation} \label{huge gronwall constant}
    C = \max{\left\{ \frac{\Lambda^2 M'}{\nu^3} + \frac{\Lambda^4 {M'}^{3}}{\nu^7} +
    \frac{\Lambda^2 M'}{\nu^5} + \frac{\Lambda}{\nu^2} +
    \gamma E_0^4 + \frac{{M'}^7}{\nu^{11}} \ , \ \frac{\Lambda^3}{\varepsilon^2\nu^2}(\nu^2 + 1) \right\}}
\end{equation}\medskip

\noindent and rewrite \eqref{gronwall expanded form} in a much simpler form.

\begin{equation} \label{simple gronwall inequality}
    \frac{dX}{dt} + Y \le C\left( 1 + \norm{B\psi}_{L^2_x}^2 \right)X^5
\end{equation}\medskip

We begin by dropping the non-negative $Y$. Since $B\psi\in L^2_{[0,T]}$ (the $T$ is from Definition \ref{local existence time definition}), we can easily integrate to arrive at:

\begin{equation} \label{bound on X(t)}
    X(t) \le \frac{X_0}{\left[ 1 - C X_0^4 \left( T' + \norm{B\psi}_{L^2_{[0,T']} L^2_x}^2 \right) \right]^{\frac{1}{4}}} \qquad a.e. \ t\in [0,T']
\end{equation}\medskip

\noindent where $X_0 = X(0) = 1 + \nu\norm{\nabla u_0}_{L^2_x}^2 + \norm{\Delta\psi_0}_{L^2_x}^2 < \infty$, due to the regularity of the initial data. In addition, $T'$ is some time that is less than or equal to $T$ from \eqref{abstract definition existence time}. Of course, the RHS of \eqref{bound on X(t)} makes sense only if $T'$ is such that the denominator doesn't become non-positive. \medskip

\begin{mydef} [Updated local existence time] \label{updated local existence time}
    Consider $T'$ such that
    \begin{equation} \label{choice of T from gronwall}
        T' + \norm{B\psi}_{L^2_{[0,T']} L^2_x}^2 \le \frac{15}{16C X_0^4}
    \end{equation}\medskip
    
    Define the local existence time $= \min{\{ T' \text{ from } \eqref{choice of T from gronwall} \ , \ T \text{ from } \eqref{abstract definition existence time} \}}$. (Favoring clarity in subscripts, we will abuse notation and denote this (updated) local existence time as $T$).
\end{mydef}\medskip

With this choice of local existence time $T$, we observe that

\begin{equation} \label{X(t)<2X_0}
    X(t) \le 2 X_0 \quad a.e. \ t \in [0,T]
\end{equation}\medskip

Returning to \eqref{simple gronwall inequality}, for $t\in [0,T]$,

\begin{equation*}
    \frac{dX}{dt} + Y \le C X^5 \left( 1 + \norm{B\psi}_{L^2_x}^2 \right) \le 32C X_0^5 \left( 1 + \norm{B\psi}_{L^2_x}^2 \right)
\end{equation*}\medskip

Integrating from $0$ to $t\in [0,T]$,

\begin{gather}
    X(t) - X_0 + \int_0^t Y(\tau) \ d\tau \le 32C X_0^5 \left( \frac{15}{16C X_0^4} \right) = 30 X_0 \notag \\
    \Rightarrow \int_0^T Y(\tau) \ d\tau \le 31 X_0 \label{Y is integrable}
\end{gather}\medskip

Thus, \eqref{X(t)<2X_0} and \eqref{Y is integrable} imply the following a priori estimates:

\begin{gather} \label{higher order inclusion implication}
    \begin{split}
    \begin{gathered}
        u \in L^{\infty}_{[0,T]}H^1_{d,x} \cap L^2_{[0,T]}H^2_{d,x} \ , \ \partial_t u \in L^2_{[0,T]}L^2_{d,x} \\
        \psi \in L^{\infty}_{[0,T]}H^2_{0,x} \ , \ B\psi \in L^2_{[0,T]}H^1_x
    \end{gathered}
    \end{split}
\end{gather}\medskip

\begin{rem} \label{drop rho to get bound on partial_t u}
    Note that the actual estimate was $\sqrt{\rho} \partial_t u \in L^2_{[0,T]}L^2_x$, but in the next (even higher) a priori estimate, we will need to bound $u$ in $L^{\infty}_x$, for which we require a bound on $\partial_t u$ in $L^2_{[0,T]}L^2_x$. This is the reason for working with density that is bounded below, and is one of the differences between this work and \cite{Kim1987WeakDensity}. 
\end{rem}\medskip

\subsubsection{What does $B\psi\in L^2_{[0,T]}H^1_x$ imply for $\psi$?} \label{B psi is in H^1}

Just as in Section \ref{B psi is in L^2}, we can confirm our intuition that $B\psi \in H^1_x$ is equivalent to $\psi\in H^3_x$ (not pointwise in time, but rather in $L^2$), since $B$ is a second-order differential operator. However, this time, we use the updated a priori estimates from \eqref{higher order inclusion implication}. \\

From the definition of $B$, and using incompressibility,
\begin{align*}
    -\frac{1}{2}\nabla\Delta\psi &= \nabla(B\psi) - i\nabla\nabla\cdot(u\psi) - \nabla\left(\frac{1}{2}\abs{u}^2\psi\right) - \mu\nabla(\abs{\psi}^2\psi) \\
    \Rightarrow \norm{\psi}_{H^3_x} &\lesssim \norm{\nabla(B\psi)}_{L^2_x} + \norm{u\psi}_{H^2_x} + \norm{\nabla\left(\frac{1}{2}\abs{u}^2\psi\right)}_{L^2_x} + \mu\norm{\nabla(\abs{\psi}^2\psi)}_{L^2_x}
\end{align*}\medskip

The Sobolev spaces $H^s(\Omega)$ form an algebra for $s>\frac{3}{2}$, so this allows us to easily estimate the second term on the RHS. The third and fourth terms are managed the same way as was done in going from \eqref{schrodinger equation higher order third step} to \eqref{schrodinger equation higher order fourth step}. In all, we arrive at:

\begin{multline*}
    \norm{\psi}_{L^2_{[0,T]}H^3_x} \lesssim \norm{\nabla(B\psi)}_{L^2_{[0,T]}L^2_x} + \norm{u}_{L^2_{[0,T]}H^2_x}\norm{\psi}_{L^{\infty}_{[0,T]}H^2_x} \\ + \norm{u}_{L^{\infty}_{[0,T]}H^1_x}\norm{u}_{L^2_{[0,T]}H^2_x}\norm{\psi}_{L^{\infty}_{[0,T]}H^2_x} + \mu T^{\frac{1}{2}}\norm{\psi}_{L^{\infty}_{[0,T]}H^1_x}^2\norm{\psi}_{L^{\infty}_{[0,T]}H^2_x}
\end{multline*}\medskip

Given the a priori estimates in \eqref{higher order inclusion implication}, we see that $\psi\in L^2_{[0,T]}H^3_{0,x}$ if $B\psi\in L^2_{[0,T]}H^1_x$. We can now (slightly) modify the estimate for the time-derivative at the end of Section \ref{B psi is in L^2}, in particular, its time regularity can be increased to $L^2$. 

\begin{equation} \label{partial_t psi is in L^2_t L^2_x}
    \norm{\partial_t \psi}_{L^2_{[0,T]}L^2_x} \lesssim \norm{B\psi}_{L^2_{[0,T]}L^2_x} + T^{\frac{1}{2}}\norm{\psi}_{L^{\infty}_{[0,T]}H^2_x} + \mu T^{\frac{1}{2}} \norm{\psi}_{L^{\infty}_{[0,T]}H^1_x}^3 < \infty
\end{equation}\medskip

Once again, using Lemma \ref{aubin-lions}, we can infer the strong convergence of a subsequence of wavefunctions in $L^2_{[0,T]}H^{[0,3)}_{0,x}$.\bigskip

\subsection{The highest-order a priori estimate for $\psi$} \label{the highest-order a priori estimate for psi}

From the previous analysis, we have obtained $B\psi\in L^2_{[0,T]}H^1_x$. However, as pointed out in the discussion following Definition \ref{local existence time definition}, we seek $B\psi\in L^2_{[0,T]}L^{\infty}_x$. Taking advantage of the embedding $H^{\frac{3}{2}+\delta}(\Omega)\subset L^{\infty}(\Omega)$, we will now derive an even higher order a priori estimate (only for $\psi$). \\

We act on \eqref{NLS} by $(-\Delta)^s$, for $s\in \left( \frac{5}{4},\frac{3}{2} \right)$.

\begin{equation}
    \partial_t (-\Delta)^s\psi + \Lambda(-\Delta)^s(B\psi) = -\frac{1}{2i}\Delta(-\Delta)^s\psi + \frac{\mu}{i}(-\Delta)^s(\abs{\psi}^2\psi)
\end{equation}\medskip

As will be shown in Section \ref{local existence proof}, the semi-Galerkin scheme for the wavefunction is set up using the eigenfunctions of the penta-Laplacian in Section \ref{Constructing the wavefunction and the Dirichlet penta-Laplacian}, which have vanishing derivatives up to the 4th order on the boundary. Since the eigenfunctions are also smooth, we see that they belong to $H^5_0(\Omega)$. Just as in Sections \ref{energy estimate} and \ref{NLS higher order estimate}, we multiply by $(-\Delta)^s\Bar{\psi}$, take the real part and integrate over $\Omega$.\medskip

\begin{enumerate}
    \item 
    \begin{align*}
        \Re\int_{\Omega} (-\Delta)^s \Bar{\psi} (-\Delta)^s (B\psi) &= \Re\left\langle (-\Delta)^s \psi, (-\Delta)^s (B\psi) \right\rangle \\
        &= \Re\left\langle (-\Delta)^{s+\frac{1}{2}} \psi, (-\Delta)^{s-\frac{1}{2}} (B\psi) \right\rangle \\
        &= \Re\left\langle (-\Delta)^{s-\frac{1}{2}} (-\Delta)\psi, (-\Delta)^{s-\frac{1}{2}} (B\psi) \right\rangle
    \end{align*}\medskip
    
    Here, we have used Lemma \ref{fractional powers self-adjoint property}. But, this requires that $B\psi\in D\left((-\Delta)^s\right) = D\left(\left((-\Delta)^5\right)^{\frac{s}{5}}\right) = H^{2s}_0 = H^{\frac{5}{2}+\delta}_0$. (The characterization in terms of Sobolev spaces follows by a proof very similar to Propositions \ref{domain of L_2^0.5} and \ref{fractional bi-laplacian sobolev spaces}.)
    
    Since $B$ has a second derivative term, this is equivalent to saying $\psi\in H^{\frac{9}{2}+\delta}_0$, thus justifying our choice of boundary conditions (the extra vanishing derivative) for the basis functions of the semi-Galerkin approximation. \medskip
    
    \item 
    \begin{align*}
        \Im\int_{\Omega} (-\Delta)^s \Bar{\psi} (-\Delta)^{s+1} \psi &= \Im \left\langle (-\Delta)^s \psi, (-\Delta)^{s+1}\psi \right\rangle \\ 
        &= \Im \left\langle (-\Delta)^{s+\frac{1}{2}}\psi, (-\Delta)^{s+\frac{1}{2}}\psi \right\rangle = 0
    \end{align*}\medskip
    
    Just as in the previous term, this requires $\psi\in H^{2s+2}_0 = H^{\frac{9}{2}+\delta}_0$, which is once again satisfied given the choice of eigenfunctions in the semi-Galerkin scheme.\medskip

    \item 
    \begin{align*}
        \Im\int_{\Omega} (-\Delta)^s \Bar{\psi} (-\Delta)^{s} (\abs{\psi}^2\psi) &= \left\langle (-\Delta)^s\psi,(-\Delta)^{s} (\abs{\psi}^2\psi) \right\rangle \\
        &= \left\langle (-\Delta)^{s+\frac{1}{2}}\psi,(-\Delta)^{s-\frac{1}{2}} (\abs{\psi}^2\psi) \right\rangle \\ 
        &= \left\langle (-\Delta)^{s-\frac{1}{2}}(-\Delta)\psi,(-\Delta)^{s-\frac{1}{2}} (\abs{\psi}^2\psi) \right\rangle
    \end{align*}\medskip

    This calculation is also justified in a manner similar to the first two terms. \bigskip
    
\end{enumerate}

In the first and third terms above, we trade the negative Laplacian for the $B$ operator as done in the a priori estimates up to this point, and arrive at the following inequality\footnote{Recall that $\gamma = \mu^2 \left( \Lambda + \frac{1}{\Lambda} \right)$.}. 

\begin{multline} \label{highest order energy inequality}
    \frac{d}{dt}\norm{(-\Delta)^s\psi}_{L^2_x}^2 + \Lambda\norm{(-\Delta)^{s-\frac{1}{2}}(B\psi)}_{L^2_x}^2 \\ \lesssim \Lambda\norm{(-\Delta)^{s-\frac{1}{2}}(u\cdot\nabla\psi)}_{L^2_x}^2 + \Lambda\norm{(-\Delta)^{s-\frac{1}{2}}(\abs{u}^2\psi)}_{L^2_x}^2 + \gamma\norm{(-\Delta)^{s-\frac{1}{2}}(\abs{\psi}^2\psi)}_{L^2_x}^2
\end{multline}\medskip

Note that choosing $s=0,\frac{1}{2},1$ leads (respectively) to the mass, energy and higher-order energy estimates from before. Now, we will estimate each of the terms in the RHS of \eqref{highest order energy inequality}. Since $s-\frac{1}{2}\in \left(\frac{3}{4},1 \right)$, we can use Lemma \ref{poincare inequality fractional} to replace all the terms on the RHS (and the second term on the LHS) by appropriate Sobolev space norms.

\begin{equation} \label{highest order energy sobolev norms}
    \frac{d}{dt}\norm{(-\Delta)^s\psi}_{L^2_x}^2 + \Lambda\norm{B\psi}_{H^{2s-1}_x}^2 \lesssim \Lambda\norm{u\cdot\nabla\psi}_{H^{2s-1}_x}^2 + \Lambda\norm{\abs{u}^2\psi}_{H^{2s-1}_x}^2 + \gamma\norm{\abs{\psi}^2\psi}_{H^{2s-1}_x}^2
\end{equation}\medskip

We set $2s-1 = \frac{3}{2}+\delta$ for some $0<\delta<\frac{1}{2}$. Using the algebra property\footnote{$\norm{fg}_{H^r}\lesssim \norm{f}_{H^r}\norm{g}_{H^r}$ for $r>\frac{d}{2}$ in $d$ dimensions.} of Sobolev norms, along with Lemma \ref{poincare inequality fractional}, we estimate the RHS of \eqref{highest order energy sobolev norms}:\medskip

\begin{enumerate}
    \item $$\norm{u\cdot\nabla\psi}_{H^{2s-1}_x}^2 \lesssim \norm{u}_{H^{2s-1}_x}^2 \norm{\psi}_{H^{2s-1}_x}^2 \lesssim \norm{u}_{H^{2}_x}^2 \norm{(-\Delta)^s\psi}_{L^2_x}^2$$\medskip
    
    Since we already have $u\in L^2_{[0,T]}H^2_{d,x}$, this term is amenable to Gr\"onwall's inequality. \medskip
    
    \item $$\norm{\abs{u}^2\psi}_{H^{2s-1}_x}^2 \lesssim \norm{u}_{H^{\frac{3}{2}+\delta}_x}^4 \norm{\psi}_{H^{2s-1}_x}^2 \lesssim \norm{u}_{H^{\frac{3}{2}+\delta}_x}^4 \norm{\psi}_{H^{2s}_x}^2$$\medskip
    
    We know that $u\in L^2_{[0,T]}H^2_{d,x}$ and $\partial_t u\in L^2_{[0,T]}L^2_{x} \subset L^2_{[0,T]} \left(H^{2}_{0,x}\right)^* \subset L^2_{[0,T]} \left(H^2_{d,x}\right)^*$. So, we can use Lemma \ref{lions-magenes} to conclude that:
    
    \begin{equation} \label{u bounded in L^infty_t}
        \norm{u}_{H^{\frac{3}{2}+\delta}_x}^2(t) \le \norm{u_0}_{H^{\frac{3}{2}+\delta}_x}^2 + 2\norm{u}_{L^2_{[0,T]}H^2_x}\norm{\partial_t u}_{L^2_{[0,T]}H^{-2}_x} \lesssim \norm{u_0}_{H^{\frac{3}{2}+\delta}_x}^2 + \frac{1}{\nu}\sqrt{\frac{M'}{\varepsilon}}X_0
    \end{equation}\medskip
    
    \noindent where $X_0$ is defined immediately following \eqref{bound on X(t)}. For brevity, define 
    
    $$E_1 = \norm{u_0}_{H^{\frac{3}{2}+\delta}_x}^2 + \norm{\psi_0}_{H^{\frac{5}{2}+\delta}_x}^2$$\medskip
    
    Thus,
    
    $$\norm{\abs{u}^2\psi}_{H^{2s-1}_x}^2 \lesssim \left(\frac{M'}{\nu^2\varepsilon}X_0^2+E_1^2\right) \norm{(-\Delta)^s\psi}_{L^2_x}^2$$\medskip
    
    \item $$\norm{\abs{\psi}^2\psi}_{H^{2s-1}_x} \lesssim \norm{\psi}_{H^2_x}^6 \lesssim \norm{\psi}_{H^2_x}^4 \norm{\psi}_{H^{2s}_x}^2 \lesssim X_0^2 \norm{(-\Delta)^s\psi}_{L^2_x}^2$$
\end{enumerate}

\bigskip

From the above estimates, we have

\begin{equation} \label{highest order energy RHS estimated}
    \frac{d}{dt}\norm{(-\Delta)^s\psi}_{L^2_x}^2 + \Lambda\norm{B\psi}_{H^{2s-1}_x}^2 \lesssim \left[ \Lambda\norm{u}_{H^2_x}^2 + \left(\Lambda \frac{M'}{\nu^2\varepsilon} + \gamma\right)X_0^2 + \Lambda E_1^2 \right] \norm{(-\Delta)^s\psi}_{L^2_x}^2
\end{equation}\medskip

Using Gr\"onwall's inequality, for all $t\in [0,T]$:

\begin{equation} \label{Dirichlet form of gronwall highest order estimate}
    \norm{(-\Delta)^s\psi}_{L^2_x}^2(t) \lesssim \norm{(-\Delta)^s\psi_0}_{L^2_x}^2 e^{c\left[ \Lambda\norm{u}_{L^2_{[0,T]}H^2_x}^2 + \left(\Lambda \frac{M'}{\nu^2\varepsilon} + \gamma\right)X_0^2 T + \Lambda E_1^2 T \right]}
\end{equation}\medskip

Using Lemma \ref{poincare inequality fractional}, we can simplify \eqref{Dirichlet form of gronwall highest order estimate} to:

\begin{equation} \label{bound on psi in H^2s}
    \norm{\psi}_{H^{2s}_x}^2 (t) \lesssim \norm{\psi_0}_{H^{2s}_x}^2 e^{c Q_T} = \norm{\psi_0}_{H^{\frac{5}{2}+\delta}_x}^2 e^{c Q_T}
\end{equation}\medskip

\noindent where 

\begin{equation} \label{defining Q_T}
    Q_T = \left[ \Lambda\frac{M'}{\nu^2}X_0 + \left(\Lambda \frac{M'}{\nu^2\varepsilon} + \gamma\right)X_0^2 T + \Lambda E_1^2 T \right]
\end{equation}\medskip

More importantly, we get the sought-after ``dissipation bound":

\begin{equation} \label{Bpsi sought-after bound}
    \norm{B\psi}_{L^2_{[0,T]}H^{2s-1}_x} \lesssim \Lambda^{-\frac{1}{2}} (Q_T^{\frac{1}{2}} + 1) e^{c Q_T} \norm{\psi_0}_{H^{2s}_x} = \Lambda^{-\frac{1}{2}} (Q_T^{\frac{1}{2}} + 1) e^{c Q_T} \norm{\psi_0}_{H^{\frac{5}{2}+\delta}_x}
\end{equation}\medskip

Since $2s-1 = \frac{3}{2}+\delta$, the second embedding in Lemma \ref{sobolev embeddings} allows us to conclude that $B\psi$ is bounded in $L^2_{[0,T]}L^{\infty}_x$. This is the required estimate to ensure that the density remains bounded below. \medskip

\begin{rem} \label{why we need additional BC for velocity}
    The estimate in \eqref{u bounded in L^infty_t} was not needed in \cite{Kim1987WeakDensity}. Indeed, in that work, it was sufficient to interpolate between $L^{\infty}_t H^1_x$ and $L^2_t H^2_x$ to get a bound in $L^{\frac{4}{1+2\delta}}_t H^{\frac{3}{2}+\delta}_x$. In our preceding analysis, however, such an interpolation would not work due to the high exponent of $u$: $2$ from the $\abs{u}^2$, and another factor of $2$ from the square on the outside. Now, in trying to use Lemma \ref{lions-magenes} (Lions-Magenes), it is necessary that the spaces that $u$ and $\partial_t u$ live in be dual to each other. Therefore, this forces us to have $u\in H^2_{0,x}$ (as opposed to simply $H^2_x$), which adds the extra boundary condition of a vanishing gradient.
\end{rem}\medskip

\subsubsection{What does ``$B\psi \in L^2_{[0,T]}H^{2s-1}_x$" imply for $\psi$?}

Once again (and for the last time!), we repeat the procedure in sections \ref{B psi is in L^2} and \ref{B psi is in H^1} to deduce what regularity on the wavefunction is imparted by the above a priori estimate.

\begin{gather*}
    -\frac{1}{2}\Delta\psi = B\psi - iu\cdot\nabla\psi - \frac{1}{2}\abs{u}^2\psi - \mu\abs{\psi}^2\psi \\
    \Rightarrow \norm{\Delta\psi}_{H^{2s-1}_x} \lesssim \norm{(-\Delta)^{s-\frac{1}{2}}B\psi}_{L^2_x} + \norm{u\cdot\nabla\psi}_{H^{2s-1}_x} + \norm{\abs{u}^2\psi}_{H^{2s-1}_x} + \norm{\abs{\psi}^2\psi}_{H^{2s-1}_x}
\end{gather*}\medskip

Performing an $L^2$ integration over $[0,T]$:

\begin{multline}
    \norm{\psi}_{L^2_{[0,T]}H^{2s+1}_x} \lesssim \norm{B\psi}_{L^2_{[0,T]}H^{2s-1}_x} + T^{\frac{1}{2}}\norm{u}_{L^{\infty}_{[0,T]}H^{2s-1}_x}\norm{\psi}_{L^{\infty}_{[0,T]}H^{2s}_x} \\ + T^{\frac{1}{2}}\norm{u}^2_{L^{\infty}_{[0,T]}H^{2s-1}_x}\norm{\psi}_{L^{\infty}_{[0,T]}H^{2s-1}_x} + T^{\frac{1}{2}}\norm{\psi}^3_{L^{\infty}_{[0,T]}H^{2s-1}_x}
\end{multline}\medskip

Each of the terms on the RHS is finite (by the preceding a priori estimates). Thus, we have $\psi\in L^2_{[0,T]}H^{\frac{7}{2}+\delta}_{0,x}$. By applying Lemma \ref{aubin-lions}, we can once again conclude strong convergence of a subsequence in $L^2_{[0,T]}H^{[0,\frac{7}{2}+\delta)}_{0,x}$.\medskip

In summary, we have the following bounds:

\begin{gather} \label{final a priori bounds}
    \begin{split}
    \begin{gathered}
        u \in L^{\infty}_{[0,T]}H^{\frac{3}{2}+\delta}_{d,x} \cap L^2_{[0,T]}H^2_{d,x} \ , \ \partial_t u \in L^2_{[0,T]}L^2_{d,x} \\
        \psi \in L^{\infty}_{[0,T]}H^{\frac{5}{2}+\delta}_{0,x} \cap L^2_{[0,T]}H^{\frac{7}{2}+\delta}_{0,x} \ , \ \partial_t \psi \in L^2_{[0,T]}L^2_{x} \\
        \ B\psi \in L^2_{[0,T]}H^{\frac{3}{2}+\delta}_{x} \\
        \varepsilon \le \rho \le M' = M+m-\varepsilon \ \forall \ (t,x) \in [0,T]\times\Omega
    \end{gathered}
    \end{split}
    \end{gather}

\bigskip

\section{Local existence of weak solutions (Proof of Theorem \ref{local existence})} \label{local existence proof}

Having derived the required a priori estimates, we are now ready to establish weak solutions to the Pitaevskii model. In this section, we will use a semi-Galerkin scheme to prove local existence of solutions for a truncated form of the governing equations, before passing to the limit to arrive at the weak solutions.  \medskip

\subsection{Constructing the semi-Galerkin scheme}
Due to the boundary conditions imposed in the Pitaevskii model, the wavefunction is constructed using eigenfunctions of the fifth power of the negative Dirichlet Laplacian operator as basis, while the velocity is built from eigenfunctions of the Leray-projected Dirichlet bi-Laplacian operator. We will now describe both these operators and their properties. \medskip

\subsubsection{The truncated wavefunction} \label{Constructing the wavefunction and the Dirichlet penta-Laplacian}

Consider the following boundary value problem (where $n$ is the outward normal at the boundary, and $f\in L^2_x$):

\begin{equation} \label{dirichlet penta-laplacian}
    \begin{gathered}
        (-\Delta)^5 \xi = f \qquad \text{ in } \Omega \\
        \xi = \frac{\partial \xi}{\partial n} = \frac{\partial^2 \xi}{\partial n^2} = \frac{\partial^3 \xi}{\partial n^3} = \frac{\partial^4 \xi}{\partial n^4} = 0 \qquad \text{ on } \partial\Omega
    \end{gathered}
\end{equation}\medskip

The operator $\mathcal{L}_5 := (-\Delta)^5$, henceforth called the \textit{(Dirichlet) penta-Laplacian}, is defined on the space $D(\mathcal{L}_5) = H^{10}\cap H^5_0$ (see Corollary 2.21 in \cite{Gazzola2010PolyharmonicDomains}). It has a discrete set of strictly positive and non-decreasing eigenvalues $(0<\beta_1\le\beta_2\le\beta_3\dots\rightarrow\infty)$, and the corresponding eigenfunctions $(\{b_j\}\in C^{\infty}(\Bar{\Omega}))$ can be chosen to be orthonormal in the $L^2_x$ norm and orthogonal in the $H^5_x$ norm. The imposed boundary conditions in the Pitaevskii model indicate that this is the right basis to consider for constructing the wavefunction\footnote{Compared to the Pitaevskii model, there is one extra vanishing derivative on the boundary for these eigenfunctions. This is to ensure some that the a priori estimates work out. See the handling of the $B\psi$ term in Section \ref{the highest-order a priori estimate for psi}.}. \medskip

For $N\in\N$, we define the truncated wavefunction as:

\begin{equation} \label{truncated wavefunction definition}
    \psi^N (t,x) = \sum_{k=1}^N d^N_k(t) b_k(x)
\end{equation}\medskip

\noindent where $d^N_k(t)\in\C$. \medskip

\subsubsection{The truncated velocity}

We begin by considering the following vector-valued boundary value problem (where $n$ is the outward normal vector on the boundary, and $F\in L^2_{d,x}$ is vector-valued):

\begin{equation} \label{dirichlet bi-Stokes}
    \begin{gathered}
        (-\Delta)^2 v + \nabla p = F \qquad \text{ in } \Omega \\
        \nabla\cdot v = 0 \qquad \text{ in } \Omega \\
        v = \frac{\partial v}{\partial n} = 0 \qquad \text{ on } \partial\Omega
    \end{gathered}
\end{equation}\medskip

When the domain $\Omega$ is bounded and sufficiently smooth, this problem has (see Theorem 2.1 in \cite{Manouzi2005AHyper-dissipation}) a unique solution pair: $v\in H^4_x$ and $p\in L^2_x\setminus \R$. These functions also satisfy the following elliptic regularity estimate:

\begin{equation} \label{elliptic regularity for bi-stokes problem}
    \norm{v}_{H^4} + \norm{p}_{L^2} \lesssim \norm{F}_{L^2}
\end{equation}\medskip

\noindent implying that the map $F\mapsto v$ is bounded. This map is the inverse of the bi-Stokes operator, to be introduced next.\medskip

We will denote by $\Leray$ the Leray projector (see chapter 2 in \cite{Robinson2016TheEquations}, for instance), which maps any Hilbert space $H$ to a subspace $H_d$ consisting of only divergence-free functions. Leray-projecting the first equation of \eqref{dirichlet bi-Stokes} eliminates the gradient term, giving (recall that $F$ is already assumed to be divergence-free):

\begin{equation}
    \begin{gathered}
        \Leray (-\Delta)^2 v = F \qquad \text{ in } \Omega \\
        v = \frac{\partial v}{\partial n} = 0 \qquad \text{ on } \partial\Omega
    \end{gathered}
\end{equation}\medskip

We will refer to $\mathfrak{S}_2 := \Leray (-\Delta)^2$ as the \textit{(Dirichlet) bi-Stokes} operator, defined on the space $D(\mathfrak{S}_2) = H^4\cap H^2_d$. Just as with the Dirichlet penta-Laplacian above, it can be easily shown that the bi-Stokes operator has a discrete set of strictly positive and non-decreasing eigenvalues $(0<\alpha_1\le\alpha_2\le\alpha_3\dots\rightarrow\infty)$. The corresponding divergence-free, vector-valued eigenfunctions $(\{a_j\}\in C^{\infty}(\Bar{\Omega}))$ can be chosen to be orthonormal in the $L^2_x$ norm and orthogonal in the $H^2_x$ norm. \medskip

For $N\in\N$, we define the truncated velocity as:

\begin{equation} \label{truncated velocity definition}
    u^N (t,x) = \sum_{k=1}^N c^N_k(t) a_k(x)
\end{equation}\medskip

\noindent where $c^N_k(t)\in\R$. \medskip

\subsection{The initial conditions} \label{the initial conditions}

\subsubsection{The initial wavefunction and initial velocity}

In this section, we will discuss our choice of truncated initial conditions for each of the fields (wavefunction, velocity and density). We begin by defining $P^N$ (respectively, $Q^N$) to be the projections onto the space spanned by the first $N$ eigenfunctions of $\mathfrak{S}_2$ (respectively, $\mathcal{L}_5$). Then, we truncate the initial conditions for the velocity and wavefunction accordingly:

\begin{equation} \label{truncated initial conditions velocity and wavefunction}
    u_0^N := P^N u_0 \qquad \qquad \psi_0^N := Q^N \psi_0
\end{equation}\medskip

\noindent Since $u_0 \in H^{\frac{3}{2}+\delta}_{d}(\Omega)$ and $\psi_0 \in H^{\frac{5}{2}+\delta}_{0}(\Omega)$, it is necessary to establish that the truncated initial conditions converge to the actual ones in the relevant norms. This is indeed true, and we will now rigorously prove it. For the wavefunction, the proof is rather straightforward because of the equivalence of norms between Sobolev spaces and fractional powers of the Dirichlet Laplacian (Lemma \ref{poincare inequality fractional}).

\begin{lem} [The projection $Q_N$ is convergent] \label{Q^N is convergent}
    Let $s\in [0,5]$. If $\psi \in H^s_{0,x}$, then $Q^N \psi \xrightarrow[N\rightarrow\infty]{H^s}\psi$, and $\norm{Q^N \psi}_{H^s_x} \lesssim \norm{\psi}_{H^s_x}$.
\end{lem}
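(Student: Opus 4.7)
The plan is to leverage the two endpoints $s=0$ and $s=5$---the $L^2$ setting and the form-domain setting for $\mathcal{L}_5$---and then interpolate. At $s=0$ the result is immediate: since $\{b_k\}$ is a complete orthonormal basis of $L^2_x$ (being the eigenfunctions of the positive self-adjoint operator $\mathcal{L}_5$ with compact inverse), $Q^N$ is the $L^2$-orthogonal projection onto $\mathrm{span}(b_1,\dots,b_N)$, so $\norm{Q^N\psi}_{L^2_x}\le\norm{\psi}_{L^2_x}$ and $Q^N\psi\to\psi$ in $L^2_x$ by completeness.

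At $s=5$, Lemma \ref{poincare inequality fractional} together with Definition \ref{fractional operator spaces definition} identifies the form domain $D(\mathcal{L}_5^{1/2})$ with $H^5_0$ and equips it with the equivalent inner product $\langle u,v\rangle := \langle\mathcal{L}_5^{1/2}u,\mathcal{L}_5^{1/2}v\rangle_{L^2_x}$, for which $\{b_k/\sqrt{\beta_k}\}$ is an orthonormal basis. A direct calculation using Lemma \ref{fractional powers self-adjoint property} gives $\langle\mathcal{L}_5^{1/2}\psi,\mathcal{L}_5^{1/2}b_k\rangle_{L^2_x}=\langle\psi,\mathcal{L}_5 b_k\rangle_{L^2_x}=\beta_k\langle\psi,b_k\rangle_{L^2_x}$, so the $\beta_k$-factors cancel and the orthogonal projection under $\langle\cdot,\cdot\rangle$ onto $\mathrm{span}(b_1,\dots,b_N)$ is again exactly $Q^N$. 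Hence $\norm{Q^N\psi}_{H^5_x}\lesssim\norm{\psi}_{H^5_x}$ for $\psi\in H^5_0$, and $Q^N\psi\to\psi$ in $H^5_x$.

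For intermediate $s\in(0,5)$, I would apply Hilbert-space interpolation to $Q^N$ on the compatible pair $(L^2_x, H^5_0)$. Expanding $\psi=\sum_k \hat\psi_k b_k$ in $L^2$, the spectral-side identity $\norm{u}_{[L^2_x, D(\mathcal{L}_5^{1/2})]_{s/5}}^2 \asymp \sum_k \beta_k^{s/5}|\hat u_k|^2$ combined with Lemma \ref{poincare inequality fractional} shows the interpolation norm is equivalent to $\norm{\cdot}_{H^s_x}$ on $H^s_0$. Since $Q^N$ is a sharp spectral cut-off in the $\mathcal{L}_5$ eigenvalue scale, the uniform bound follows immediately from $\norm{Q^N\psi}_{H^s_x}^2 \asymp \sum_{k\le N}\beta_k^{s/5}|\hat\psi_k|^2 \le \sum_k \beta_k^{s/5}|\hat\psi_k|^2 \asymp \norm{\psi}_{H^s_x}^2$. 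Strong convergence $Q^N\psi\to\psi$ in $H^s$ is then obtained by an $\eta/3$ density argument: given $\psi\in H^s_0$, pick $\psi^\eta\in H^5_0$ with $\norm{\psi-\psi^\eta}_{H^s_x}<\eta$; the uniform bound handles both $\psi-\psi^\eta$ pieces of the triangle inequality, while $\norm{Q^N\psi^\eta-\psi^\eta}_{H^s_x} \lesssim \norm{Q^N\psi^\eta-\psi^\eta}_{H^5_x}\to 0$ via the $s=5$ endpoint.

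The one technical subtlety is the interpolation identification at the exceptional half-integer values $s\in\{1/2,3/2,5/2,7/2,9/2\}$, where Lions-Magenes spaces nominally appear. For this lemma, however, only a continuous embedding and a uniform operator bound are needed, both of which follow from the spectral-truncation representation: $Q^N$ is a contraction in every norm of the form $\left(\sum_k\beta_k^{2\alpha}|\hat u_k|^2\right)^{1/2}$, so no delicate identification of interpolation spaces at the boundary-regularity indices is actually required to close the argument.
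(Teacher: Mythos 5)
Your argument is correct and rests on the same core identity as the paper's proof: expanding $\psi$ in the eigenbasis $\{b_k\}$ of the penta-Laplacian and using the equivalence $\norm{\psi}_{H^s_x}^2 \asymp \sum_k \beta_k^{s/5}\abs{\langle\psi,b_k\rangle}^2$ (via Lemma \ref{poincare inequality fractional} and Definition \ref{fractional operator spaces definition}), from which the uniform bound is just truncation of a convergent series. The paper reaches both conclusions in one step — convergence is the vanishing tail of that same series — so your endpoint-plus-interpolation-plus-density scaffolding (and the discussion of the exceptional half-integer indices, which the paper silently glosses over and which never arise for the values of $s$ actually used) is a longer route to the same spectral argument rather than a genuinely different one.
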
\medskip

\begin{proof}
    Let $\psi\in H^s_{0,x}$ be given by:
    
    \begin{equation*}
        \psi = \sum_{k=1}^{\infty} \langle \psi,b_k \rangle b_k
    \end{equation*}\medskip
    
    Then, using Lemma \ref{poincare inequality fractional}:
    
    \begin{equation*}
        \norm{\psi}_{H^s_x}^2 \equiv \norm{(-\Delta)^{\frac{s}{2}}\psi}_{L^2_x}^2 = \sum_{k=1}^{\infty} \beta_k^{\frac{s}{5}} \abs{\langle \psi,b_k \rangle}^2 < \infty
    \end{equation*}\medskip
    
    Since the sum is finite, the sequence constituting the series must tend to zero. Thus,
    
    \begin{equation*}
        \norm{Q^N \psi - \psi}_{H^s_x}^2 = \norm{\sum_{k=N+1}^{\infty}\langle \psi,b_k \rangle b_k}_{H^s_x}^2 \equiv \sum_{k=N+1}^{\infty} \beta_k^{\frac{s}{5}} \abs{\langle \psi,b_k \rangle}^2 \xrightarrow[N\rightarrow\infty]{} 0
    \end{equation*}\medskip
    
    Finally,
    
    \begin{equation*}
        \norm{Q^N \psi}_{H^s_x}^2 \equiv \norm{(-\Delta)^{\frac{s}{2}} Q^N \psi}_{L^2_x}^2 \le \norm{(-\Delta)^{\frac{s}{2}} \psi}_{L^2_x}^2 \equiv \norm{\psi}_{H^s_x}^2
    \end{equation*}\medskip
\end{proof}

For the case of the velocity, we first have to establish a relation analogous to Lemma \ref{poincare inequality fractional} for the bi-Stokes operator. We will do this by following the approach in Section 3 of \cite{Fefferman2019SimultaneousEigenspaces}. Since the bi-Stokes operator is the Leray projection of the Dirichlet bi-Laplacian operator, we will begin with fractional powers of the latter.\medskip

Consider the following boundary value problem (with $f \in L^2_x)$. The operator of interest is the Dirichlet bi-Laplacian, denoted by $\mathcal{L}_2 = (-\Delta)^2$.

\begin{equation} \label{dirichlet bi-laplacian}
    \begin{gathered}
        (-\Delta)^2 \xi = f \qquad \text{ in } \Omega \\
        \xi = \frac{\partial \xi}{\partial n} = 0 \qquad \text{ on } \partial\Omega
    \end{gathered}
\end{equation}\medskip

Like the other operators discussed above, the bi-Laplacian (defined on $L^2$) is also positive and self-adjoint, with a compact inverse. Thus, it has a positive and non-decreasing spectrum $(0<l_1\le l_2\le l_3 \le \dots \infty)$, and smooth eigenfunctions $(\{w_j\}\in C^{\infty}(\Bar{\Omega}))$ that are orthonormal in $L^2$ and orthogonal in $H^2$. Based on elliptic regularity theory (see Corollary 2.21 in \cite{Gazzola2010PolyharmonicDomains}), the domain of the biharmonic operator is given by $D(\mathcal{L}_2) = H^4 \cap H^2_0$. Now, recalling Definition \ref{fractional operator spaces definition}, we establish the domain of the half-power of the bi-Laplacian.\medskip

\begin{prop} [Domain of half-power of the bi-Laplacian] \label{domain of L_2^0.5}
    \begin{equation*}
        D(\mathcal{L}_2^{\frac{1}{2}}) = H^2_0
    \end{equation*}\medskip
\end{prop}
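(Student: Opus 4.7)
The plan rests on two ingredients. First, on the dense subspace $D(\mathcal{L}_2) = H^4 \cap H^2_0$, the fractional norm $\|\mathcal{L}_2^{1/2}\cdot\|_{L^2}$ can be computed explicitly. For $u \in D(\mathcal{L}_2)$, one has by the spectral definition
\begin{equation*}
    \|\mathcal{L}_2^{1/2} u\|_{L^2}^2 = \langle \mathcal{L}_2 u, u \rangle = \int_\Omega (\Delta^2 u)\,u \, dx = \|\Delta u\|_{L^2}^2,
\end{equation*}
where both integrations by parts are justified by the boundary conditions $u = \partial_n u = 0$. Second, standard $H^2$ elliptic regularity for the Dirichlet Laplacian combined with Lemma \ref{poincare inequality} yields $\|u\|_{H^2} \sim \|\Delta u\|_{L^2}$ for every $u \in H^2_0$. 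Together, these give the norm equivalence $\|\mathcal{L}_2^{1/2} u\|_{L^2} \sim \|u\|_{H^2}$ on $D(\mathcal{L}_2)$.

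With the equivalence in hand, I would argue inclusion in both directions by approximation. For $(\subseteq)$: if $u = \sum \hat u_j w_j \in D(\mathcal{L}_2^{1/2})$, the partial sums $u_N = \sum_{j=1}^N \hat u_j w_j$ lie in $D(\mathcal{L}_2)$, and by the key identity
\begin{equation*}
    \|u_M - u_N\|_{H^2}^2 \;\sim\; \|\Delta (u_M - u_N)\|_{L^2}^2 \;=\; \sum_{j=N+1}^M l_j |\hat u_j|^2 \;\xrightarrow[N,M\to\infty]{}\; 0,
\end{equation*}
so $\{u_N\}$ is Cauchy in $H^2_0$. Because $l_j \ge l_1 > 0$, the sequence also converges in $L^2$ to $u$, forcing its $H^2$-limit to coincide with $u$; hence $u \in H^2_0$. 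For $(\supseteq)$: given $u \in H^2_0$, pick $\phi_n \in C_c^\infty(\Omega) \subset D(\mathcal{L}_2)$ with $\phi_n \to u$ in $H^2$. By the equivalence, $\{\phi_n\}$ is Cauchy in the Hilbert space $D(\mathcal{L}_2^{1/2})$ (completeness of the latter is automatic since it is a weighted $\ell^2$ space on the Fourier coefficients), so $\phi_n \to v$ in $D(\mathcal{L}_2^{1/2})$ for some $v$. Because convergence in $D(\mathcal{L}_2^{1/2})$ implies $L^2$ convergence, and $\phi_n \to u$ in $L^2$ as well, we conclude $u = v \in D(\mathcal{L}_2^{1/2})$.

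The only real verification required beyond bookkeeping is the integration by parts identity, which is where the boundary conditions defining $H^2_0$ enter decisively; everything else is a routine completion argument. The density of $C_c^\infty(\Omega)$ in $H^2_0$ is used in the $(\supseteq)$ direction, and the strictly positive lower bound $l_1 > 0$ on the spectrum provides the $L^2$ control needed in the $(\subseteq)$ direction. I do not anticipate a genuine obstacle: the structural analogue for the Dirichlet Laplacian is treated in \cite{Fefferman2019SimultaneousEigenspaces}, and the same template transfers verbatim once the bi-Laplacian integration by parts is carried out.
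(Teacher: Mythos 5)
Your proposal is correct, and it shares the paper's key ingredient — the integration-by-parts identity $\langle \mathcal{L}_2 u,u\rangle = \|\Delta u\|_{L^2}^2$ (the paper writes it with $\|D^2 u\|_{L^2}$, which is the same quantity on $H^2_0$) — but the way you convert it into the set equality differs in the second half. The paper proves the inclusion $D(\mathcal{L}_2^{1/2})\subseteq H^2_0$ essentially as you do (convergence in the fractional norm forces $H^2$ convergence of eigenfunction expansions, so $D(\mathcal{L}_2^{1/2})$ sits inside $H^2_0$ as a closed subspace), but it then finishes in one stroke with an orthogonality argument inside the Hilbert space $H^2_0$: any $v\in H^2_0$ orthogonal to the closed subspace is tested against the eigenfunctions $w_j$, giving $(1+l_j)\langle v,w_j\rangle = 0$ and hence $v=0$, so the closed subspace is all of $H^2_0$. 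You instead prove the reverse inclusion directly by approximation: take $\phi_n\in C_c^\infty(\Omega)\subset D(\mathcal{L}_2)$ converging to $u$ in $H^2$ (which is density by the very definition of $H^2_0$ used in this paper), use the norm equivalence to see the sequence is Cauchy in $D(\mathcal{L}_2^{1/2})$, invoke completeness of that weighted-$\ell^2$ space together with the spectral lower bound $l_1>0$, and identify the limit through $L^2$. Both routes are sound; yours is a symmetric two-inclusion approximation argument that avoids the $H^2_0$ inner-product bookkeeping, at the cost of explicitly invoking completeness of $D(\mathcal{L}_2^{1/2})$ and the $C_c^\infty$-density, while the paper's orthogonal-complement device (following the template of Fefferman et al.) dispenses with those and needs only the closedness of the subspace plus the eigenbasis.
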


\begin{proof}
    If $u\in D(\mathcal{L}_2), v \in H^2_0$, then:
    \begin{equation*}
        \langle \mathcal{L}_2 u,v\rangle = \langle (-\Delta)^2 u,v \rangle = \langle D^2 u, D^2 v \rangle
    \end{equation*}\medskip
    
    Since $l_j^{-\frac{1}{2}}w_j \in D(\mathcal{L}_2)$,
    
    \begin{equation*}
        \delta_{jk} = \langle l_j^{-\frac{1}{2}}w_j,l_k^{-\frac{1}{2}}w_k \rangle_{D(\mathcal{L}_2^{\frac{1}{2}})} = \langle \mathcal{L}_2 l_j^{-\frac{1}{2}}w_j,l_k^{-\frac{1}{2}}w_k \rangle = \langle D^2(l_j^{-\frac{1}{2}}w_j),D^2(l_k^{-\frac{1}{2}}w_k) \rangle
    \end{equation*}\medskip
    
    Since $D(\mathcal{L}_2^{\frac{1}{2}})$ is defined via an eigenfunction expansion, the above equality implies that convergence in  $D(\mathcal{L}_2^{\frac{1}{2}})$ guarantees convergence in $H^2_0$. We conclude that $D(\mathcal{L}_2^{\frac{1}{2}})$ is a closed subspace of $H^2_0$.
    
    Now, if $v\in H^2_0$ with $\langle v,u \rangle_{H^2_0} = 0 \ \forall \ u\in D(\mathcal{L}_2^{\frac{1}{2}})$, then for all $j\in\N$:
    
    \begin{equation*}
    \begin{gathered}
        0 = \langle v,w_j \rangle + \langle D^2 v, D^2 w_j \rangle = \langle v,w_j \rangle + \langle v, \mathcal{L}_2w_j \rangle = (1+l_j)\langle v,w_j \rangle \\
        \Rightarrow v = 0 \Rightarrow D(\mathcal{L}_2^{\frac{1}{2}}) = H^2_0
    \end{gathered}
    \end{equation*}\medskip
\end{proof}

We will now interpolate between $D(\mathcal{L}_2^0) = L^2$ and $D(\mathcal{L}_2^{\frac{1}{2}})$. 

%

\begin{mydef} [Fractional bi-Laplacian] \label{fractional bi-Laplacian definition}
    For $0<\theta<\frac{1}{2}$, we define the fractional bi-Laplacian as:
    \begin{equation}
        D(\mathcal{L}_2^{\theta}) = \left(D(\mathcal{L}_2^0),D(\mathcal{L}_2^{\frac{1}{2}})\right)_{\theta}
    \end{equation}\medskip
    
    \noindent where $(X,Y)_{\theta}$ is the \textbf{interpolation space} between Banach spaces $X$ and $Y$ that are both embedded in a common vector space. For details on real interpolation (and the K-method), see \cite{Fefferman2019SimultaneousEigenspaces} for a brief introduction and Chapter 7 of \cite{Adams2003SobolevSpaces} for a detailed exposition.\medskip
\end{mydef}

Having defined the fractional bi-Laplacian, what remains is to interpolate, and the final result is stated in the proposition below.\medskip

\begin{prop} \label{fractional bi-laplacian sobolev spaces}
    For $0<\theta<\frac{1}{2}$, $D(\mathcal{L}_2^{\theta}) = H^{4\theta}_0$.
\end{prop}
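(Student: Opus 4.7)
The plan is to combine Proposition \ref{domain of L_2^0.5} (which identifies the endpoint $D(\mathcal{L}_2^{1/2}) = H^2_0$) with the classical theory of real interpolation of Dirichlet Sobolev spaces, as developed in Lions--Magenes and summarized in Chapter 7 of \cite{Adams2003SobolevSpaces}. Since by Definition \ref{fractional bi-Laplacian definition} the space $D(\mathcal{L}_2^\theta)$ is the K-method interpolation space $(L^2, H^2_0)_\theta$, the task reduces to identifying this intermediate space with $H^{4\theta}_0$.

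The first step is to verify that the interpolation definition of $D(\mathcal{L}_2^\theta)$ agrees with the spectral one already available from Definition \ref{fractional operator spaces definition}. Writing any $u \in L^2$ in the $L^2$-orthonormal, $H^2_0$-orthogonal eigenbasis $\{w_j\}$ of $\mathcal{L}_2$ as $u = \sum \hat u_j w_j$, a direct computation of the Peetre K-functional on finite partial sums reproduces exactly the weighted $\ell^2$ condition $\sum_j l_j^{2\theta}|\hat u_j|^2 < \infty$ that characterizes $D(\mathcal{L}_2^\theta)$ spectrally. This is the classical theorem of Lions on fractional powers of positive self-adjoint operators, and it lets me switch freely between the spectral and interpolation pictures.

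The second step is to invoke the standard interpolation identity $(L^2, H^2_0)_\theta = H^{4\theta}_0$, valid at non-exceptional $\theta \in (0, 1/2)$. Away from the critical values, this is a standard consequence of the trace method (or extension/reflection) for Sobolev spaces on smooth bounded domains, and places $H^{4\theta}_0$ correctly as $H^{4\theta}$ when $4\theta < 1/2$ and as the Sobolev space with appropriate vanishing boundary traces when $4\theta > 1/2$. The main obstacle is the handling of the two half-integer indices $4\theta \in \{1/2, 3/2\}$ (i.e., $\theta \in \{1/8, 3/8\}$), at which the naive interpolation identity produces the Lions--Magenes space $H^{s}_{00}$ rather than $H^s_0$. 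I would bypass this by observing that the finite span of the eigenfunctions $\{w_j\}$ is dense both in $D(\mathcal{L}_2^\theta)$ (by the spectral characterization) and in $H^{4\theta}_0$ (since $H^2_0$ embeds densely into $H^{4\theta}_0$ for $4\theta < 2$), and then verifying the equivalence of the two norms on this common dense subspace using the first step; completing in either norm identifies the two spaces in all cases.
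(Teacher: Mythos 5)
Your first two steps are essentially the paper's own route: Definition \ref{fractional bi-Laplacian definition} reduces the claim to identifying $(L^2,H^2_0)_{2\theta}$, Proposition \ref{domain of L_2^0.5} supplies the endpoint, and the identification of that interpolation space with a Sobolev space is quoted from the literature (the paper cites Corollaries 4.7 and 4.10 of \cite{Chandler-Wilde2015InterpolationCounterexamples}). Your extra verification that the K-method space coincides with the spectral definition from Definition \ref{fractional operator spaces definition} (Lions' theorem) is a reasonable addition; the paper leaves it implicit but does rely on it later, e.g.\ in Theorem \ref{P^N is convergent}.

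The gap is in your treatment of the exceptional indices $4\theta\in\{1/2,3/2\}$. At these values the interpolation (equivalently, spectral) space is the Lions--Magenes space $H^{4\theta}_{00}=\widetilde H^{4\theta}(\Omega)$, whose norm carries an extra Hardy-type weight near $\partial\Omega$ and which is a \emph{strict} subspace of $H^{4\theta}_0(\Omega)$; for instance $H^{1/2}_0(\Omega)=H^{1/2}(\Omega)$, while a smooth function equal to $1$ near a portion of the boundary lies in $H^{1/2}$ but not in $H^{1/2}_{00}$. Your density argument cannot repair this: the eigenfunction span is indeed dense in both $D(\mathcal{L}_2^{\theta})$ and $H^{4\theta}_0$, but the two norms are \emph{not} equivalent on that span --- if they were, the two completions, viewed inside $L^2$, would coincide, contradicting $H^{1/2}_{00}\subsetneq H^{1/2}_0$. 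Density of a common subspace identifies completions only in conjunction with a two-sided norm comparison, and that comparison is precisely what fails at the half-integer indices. (To be fair, the paper's citation really gives the $\widetilde H^s$ scale as well, so the identity should be read away from $4\theta=1/2,3/2$; this is harmless for the paper, which only uses non-exceptional indices such as $4\theta=\tfrac{3}{2}+\delta$ and $4\theta=2$, but your proposed patch of the exceptional cases is not a valid proof.)
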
\medskip

\begin{proof}
    \begin{equation}
    \begin{aligned}
        D(\mathcal{L}_2^{\theta}) &= \left(D(\mathcal{L}_2^0),D(\mathcal{L}_2^{\frac{1}{2}})\right)_{2\theta} \\
        &= (L^2,H^2_0)_{2\theta} \\
        &= H^{(1-2\theta)0 + (2\theta)2}_0\\
        &= H^{4\theta}_0
    \end{aligned}
    \end{equation}\medskip
    
    The first equality is just Definition \ref{fractional bi-Laplacian definition}, with the observation that the interpolation index is $2\theta$ on the RHS since $\theta$ goes from $0$ to $\frac{1}{2}$ and the interpolation always goes from $0$ to $1$. The second equality follows from Definition \ref{fractional operator spaces definition} and Proposition \ref{domain of L_2^0.5}.\medskip
    
    The penultimate equality is from the result (see Corollaries 4.7 and 4.10 in \cite{Chandler-Wilde2015InterpolationCounterexamples}) that interpolation of Sobolev spaces (on a Lipschitz domain) is a closed operation, i.e., yields another Sobolev space.
    
\end{proof}

\medskip

At this stage, we have the domain of definition of the fractional bi-Laplacian. We now return our attention to the bi-Stokes operator $(\mathfrak{S}_2)$, which involves the Leray projector acting on $\mathcal{L}_2$. Therefore, to begin with, we note that $D(\mathfrak{S}_2) = H^4 \cap H^2_0 \cap L^2_d$, where $L^2_d$ is the completion of smooth, divergence-free functions in the $L^2$ norm.\medskip

\begin{prop} \label{fractional bi-stokes operator domain proposition}
    For $0<\theta<1$,
    
    \begin{equation} \label{fractional bi-stokes operator domain}
        D(\mathfrak{S}_2^{\theta}) = D(\mathcal{L}_2^{\theta})\cap L^2_d
    \end{equation}\medskip
    
    In particular, $D(\mathfrak{S}_2^{\theta}) = H^{4\theta}_0 \cap L^2_d$ for $0<\theta<\frac{1}{2}$.
\end{prop}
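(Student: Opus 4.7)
My plan is to parallel the structure of Proposition \ref{fractional bi-laplacian sobolev spaces}: first identify $D(\mathfrak{S}_2) = D(\mathcal{L}_2) \cap L^2_d$ as sets, then upgrade this to the fractional powers by a Hilbert-space interpolation argument, and finally invoke Proposition \ref{fractional bi-laplacian sobolev spaces} to obtain the Sobolev identification for $0<\theta<\tfrac12$.

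The endpoint $\theta=1$ I would establish directly: the inclusion $D(\mathfrak{S}_2)\subseteq D(\mathcal{L}_2)\cap L^2_d$ is immediate, since the very formula $\mathfrak{S}_2 u = \Leray\mathcal{L}_2 u$ requires $u\in D(\mathcal{L}_2)$, while the reverse inclusion uses boundedness of $\Leray$ on $L^2$, which ensures $\Leray\mathcal{L}_2 u\in L^2_d$ whenever $u\in D(\mathcal{L}_2)\cap L^2_d$. The endpoint $\theta=0$ is the tautology $L^2_d = L^2\cap L^2_d$.

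For intermediate $\theta$, I would invoke the general characterization of fractional operator powers as Hilbert interpolation spaces, valid for any positive self-adjoint operator with compact inverse (the same spectral-theoretic fact that underlies the equivalence between Definitions \ref{fractional operator spaces definition} and \ref{fractional bi-Laplacian definition}): namely, $D(\mathcal{L}_2^\theta) = (L^2, D(\mathcal{L}_2))_\theta$ and $D(\mathfrak{S}_2^\theta) = (L^2_d, D(\mathfrak{S}_2))_\theta$. Using the previous step, the proposition then reduces to the interpolation identity
\begin{equation*}
    \bigl(L^2_d,\, D(\mathcal{L}_2)\cap L^2_d\bigr)_\theta \;=\; \bigl(L^2,\, D(\mathcal{L}_2)\bigr)_\theta \cap L^2_d,
\end{equation*}
which is the standard interpolation-of-complemented-subspaces statement, available whenever one has a bounded projection acting compatibly on the pair $(L^2, D(\mathcal{L}_2))$ onto $(L^2_d, D(\mathcal{L}_2)\cap L^2_d)$.

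The main obstacle is supplying this projection at the $\theta=1$ endpoint: although $\Leray$ is bounded on $L^2$, it does not preserve $D(\mathcal{L}_2) = H^4 \cap H^2_0$, because the Helmholtz decomposition enforces only the normal boundary condition and can destroy the tangential one. I would circumvent this via a Friedrichs-extension / Kato argument: since $\mathfrak{S}_2$ is the self-adjoint realization on $L^2_d$ of the quadratic form $u\mapsto \|\mathcal{L}_2^{1/2} u\|_{L^2}^2$ restricted to $L^2_d\cap D(\mathcal{L}_2^{1/2})$, Kato's representation theorem identifies $D(\mathfrak{S}_2^{1/2}) = L^2_d \cap D(\mathcal{L}_2^{1/2})$. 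With this extra endpoint in hand, reiteration between $\theta=0$, $\theta=1/2$, and $\theta=1$ yields \eqref{fractional bi-stokes operator domain} throughout $0<\theta<1$. The Sobolev identification for $0<\theta<\tfrac12$ then follows by substituting $D(\mathcal{L}_2^\theta) = H^{4\theta}_0$ from Proposition \ref{fractional bi-laplacian sobolev spaces}.
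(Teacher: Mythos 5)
Your reduction of the problem to an interpolation identity of the form $\bigl(L^2_d,\,D(\mathcal{L}_2)\cap L^2_d\bigr)_\theta = \bigl(L^2,\,D(\mathcal{L}_2)\bigr)_\theta\cap L^2_d$, and your observation that the Leray projector $\Leray$ cannot serve as the compatible projection because it destroys the $H^2_0$ boundary conditions, both match the actual structure of the argument. Your Kato/second-representation-theorem step is also sound at its single point of application: granting the elliptic regularity for the weak bi-Stokes problem (which is exactly \eqref{elliptic regularity for bi-stokes problem}), $\mathfrak{S}_2$ is the form operator of $u\mapsto\norm{\Delta u}_{L^2}^2$ on $H^2_0\cap L^2_d$, so $D(\mathfrak{S}_2^{1/2}) = D(\mathcal{L}_2^{1/2})\cap L^2_d$ does follow.

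The gap is in the final sentence: reiteration between the exact points $\theta=0,\tfrac12,1$ does \emph{not} yield \eqref{fractional bi-stokes operator domain} for the remaining $\theta$. Reiteration identifies $D(\mathfrak{S}_2^\theta)$ with, say, $\bigl(L^2_d,\,D(\mathcal{L}_2^{1/2})\cap L^2_d\bigr)_{2\theta}$ for $0<\theta<\tfrac12$, but to conclude equality with $D(\mathcal{L}_2^\theta)\cap L^2_d$ you must still commute interpolation with the intersection against $L^2_d$ for the couple $(L^2,H^2_0)$ --- which is precisely the subcouple problem you set out to avoid, reappearing one rung lower (and $\Leray$ again fails to preserve $H^2_0$). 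Interpolating the inclusion maps gives only the easy inclusion $D(\mathfrak{S}_2^\theta)\subseteq D(\mathcal{L}_2^\theta)\cap L^2_d$; the reverse inclusion is not automatic, and in general interpolation of a restricted subcouple can be a strictly smaller space than the restricted interpolation space, so knowing equality at three parameter values does not propagate to the others. What is missing is exactly a retraction acting compatibly on the couple: the paper supplies it by setting $\Tilde{T}:=\mathfrak{S}_2^{-1}\Leray\mathcal{L}_2$, showing via \eqref{elliptic regularity for bi-stokes problem} that it is bounded from $D(\mathcal{L}_2)$ to $D(\mathfrak{S}_2)$, showing by a duality estimate that it is bounded in $L^2$ and hence extends by density to $T:L^2\to L^2_d$ with $T\rvert_{L^2_d}=\mathrm{Id}$, and then invoking the intersection lemma (Lemma 3.4) of \cite{Fefferman2019SimultaneousEigenspaces}. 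Your argument would be complete if you either constructed such a $T$ (your Kato step is then superfluous) or justified, by some additional argument, why the divergence-free intersection commutes with interpolation on $(L^2,H^2_0)$; as written, the intermediate cases $\theta\neq\tfrac12$ --- including the range $0<\theta<\tfrac12$ actually used for Theorem \ref{P^N is convergent} --- are not proved.
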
\medskip

\begin{proof}
    Since the bi-Stokes operator is defined on $L^2_d$, we have:
    
    \begin{equation} \label{domain of fractional bi-stokes}
        D(\mathfrak{S}_2^{\theta}) = \left(L^2_d,D(\mathcal{L}_2)\cap L^2_d\right)_{\theta} = \left(L^2\cap L^2_d,D(\mathcal{L}_2)\cap L^2_d\right)_{\theta}
    \end{equation}\medskip
    
    We would like to use the ``intersection lemma" (Lemma 3.4) of \cite{Fefferman2019SimultaneousEigenspaces} in order to commute the interpolation and intersection operations. To this end, we must construct an operator $T: L^2\rightarrow L^2_d$ such that $T\rvert_{L^2_d} = Id$ (the identity), and $T$ must also be bounded from $D(\mathcal{L}_2)$ to $D(\mathcal{L}_2)\cap L^2_d$.\medskip
    
    First, consider the operator $\Tilde{T}: D(\mathcal{L}_2) \rightarrow D(\mathfrak{S}_2)$ given by $$\Tilde{T} := \mathfrak{S}_2^{-1}\Leray\mathcal{L}_2$$
    
    From \eqref{elliptic regularity for bi-stokes problem}, $\mathfrak{S}_2^{-1}:L^2_d \mapsto D(\mathfrak{S}_2)$ is bounded, and so we have $\norm{\mathfrak{S}_2^{-1}f}_{H^4} \lesssim \norm{f}_{L^2}$. Thus, for any $f\in D(\mathcal{L}_2)$:
    
    \begin{equation*}
        \norm{\Tilde{T}f}_{D(\mathfrak{S}_2)} \lesssim \norm{\Tilde{T}f}_{H^4} = \norm{\mathfrak{S}_2^{-1}\Leray\mathcal{L}_2f}_{H^4} \lesssim \norm{\Leray\mathcal{L}_2f}_{L^2} \le \norm{\mathcal{L}_2f}_{L^2} \lesssim \norm{f}_{D(\mathcal{L}_2)}
    \end{equation*}\medskip
    
    This shows that $\Tilde{T}$ is bounded from $D(\mathcal{L}_2)$ to $D(\mathfrak{S}_2)$. Now, for $g\in L^2_d$ and $f\in D(\mathcal{L}_2)$, since $\mathfrak{S}_2,\mathcal{L}_2$ are self-adjoint and $\Leray$ is symmetric, 
    
    \begin{multline*}
        \abs{\langle g,\Tilde{T}f \rangle} = \abs{\langle g,\mathfrak{S}_2^{-1}\Leray\mathcal{L}_2 f \rangle} = \abs{\langle \mathfrak{S}_2^{-1}g,\Leray\mathcal{L}_2 f \rangle} = \abs{\langle \mathfrak{S}_2^{-1}g,\mathcal{L}_2 f \rangle} \\ 
        = \abs{\langle \mathcal{L}_2\mathfrak{S}_2^{-1}g,f \rangle} \le \norm{\mathfrak{S}_2^{-1}g}_{H^4} \norm{f}_{L^2} \lesssim \norm{g}_{L^2} \norm{f}_{L^2}
    \end{multline*}
    
    \begin{equation*}
        \Rightarrow \norm{\Tilde{T}f}_{L^2_d} \lesssim \norm{f}_{L^2}
    \end{equation*}\medskip
    
    Since $\Tilde{T}$ is linear and $D(\mathcal{L}_2)$ is dense in $L^2$, we can therefore extend $\Tilde{T}$ to $T:L^2\mapsto L^2_d$. This operator is also the identity on $L^2_d$, since $f\in L^2_d$ can be expanded in terms of the eigenfunctions of $\mathfrak{S}_2$, and noting that $f\in D(\mathfrak{S}_2)$, we have $\Leray\mathcal{L}_2 = \mathfrak{S}_2$. 
    
    \medskip
    
    Using \eqref{domain of fractional bi-stokes}, the map $T$ constructed above, and Lemma 3.4 of \cite{Fefferman2019SimultaneousEigenspaces}, we arrive at the required result.
\end{proof}

\bigskip

Finally, we are ready to prove that the truncated initial velocity is also convergent.

\begin{thm} [The projection $P_N$ is convergent] \label{P^N is convergent}
    Let $r\in [0,2]$. If $u \in H^r_{d,x}$, then $P^N u \xrightarrow[N\rightarrow\infty]{H^r}u$, and $\norm{P^N u}_{H^r_{d,x}} \lesssim \norm{u}_{H^r_{d,x}}$. 
\end{thm}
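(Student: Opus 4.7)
The plan is to emulate the proof of Lemma \ref{Q^N is convergent}, substituting the bi-Stokes operator $\mathfrak{S}_2$ for the penta-Laplacian $\mathcal{L}_5$. The key input is the identification of fractional-power spaces with Sobolev spaces already established in Proposition \ref{fractional bi-stokes operator domain proposition}.

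I would first handle $r=0$ separately: since $\mathfrak{S}_2$ is positive, self-adjoint, and has compact inverse on $L^2_d$, the standard spectral theorem guarantees that $\{a_k\}$ is an orthonormal basis of $L^2_d$. Hence $P^N u \to u$ in $L^2_x$ with $\norm{P^N u}_{L^2_x} \le \norm{u}_{L^2_x}$ by Bessel's inequality.

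For $r \in (0,2]$, set $\theta = r/4 \in (0,1/2]$. When $\theta < 1/2$, Proposition \ref{fractional bi-stokes operator domain proposition} identifies $D(\mathfrak{S}_2^\theta)$ with $H^r_0 \cap L^2_d$, which is precisely $H^r_{d,x}$ as defined in the notation (the completion of divergence-free test functions in the $H^r$ norm; for $r \le 1/2$ the trace condition is vacuous, and for $r > 1/2$ it is built into both descriptions). For the endpoint $\theta = 1/2$, combining Proposition \ref{domain of L_2^0.5} with the intersection-lemma argument from the proof of Proposition \ref{fractional bi-stokes operator domain proposition} yields $D(\mathfrak{S}_2^{1/2}) = H^2_0 \cap L^2_d = H^2_{d,x}$. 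In either case, the $H^r$ norm on $H^r_{d,x}$ is equivalent to $\norm{\mathfrak{S}_2^\theta(\cdot)}_{L^2_x}$.

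Given $u \in H^r_{d,x}$, I would expand $u = \sum_{k=1}^\infty \hat{u}_k a_k$ with $\hat{u}_k = \langle u,a_k\rangle$. By Definition \ref{fractional operator spaces definition} and the norm equivalence above,
\[
\norm{u}_{H^r_x}^2 \;\sim\; \sum_{k=1}^\infty \alpha_k^{2\theta}\abs{\hat{u}_k}^2 < \infty.
\]
Because this series converges, its tail vanishes in the limit:
\[
\norm{P^N u - u}_{H^r_x}^2 \;\sim\; \sum_{k=N+1}^\infty \alpha_k^{2\theta}\abs{\hat{u}_k}^2 \xrightarrow[N\to\infty]{} 0,
\]
and the uniform bound follows from truncation,
\[
\norm{P^N u}_{H^r_x}^2 \;\sim\; \sum_{k=1}^N \alpha_k^{2\theta}\abs{\hat{u}_k}^2 \;\le\; \sum_{k=1}^\infty \alpha_k^{2\theta}\abs{\hat{u}_k}^2 \;\sim\; \norm{u}_{H^r_x}^2.
\]
The only genuinely nontrivial step is the reconciliation between the two definitions of $H^r_{d,x}$ (as a completion versus as $D(\mathfrak{S}_2^{r/4})$), and that identification was the purpose of the preceding Propositions \ref{domain of L_2^0.5}--\ref{fractional bi-stokes operator domain proposition}; once those are invoked, the convergence is just the standard tail-of-a-Fourier-series argument, exactly parallel to Lemma \ref{Q^N is convergent}.
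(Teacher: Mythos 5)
Your proposal is correct and follows essentially the same route as the paper: expand $u$ in the bi-Stokes eigenbasis, use the norm equivalence $\norm{u}_{H^r_x}\equiv\norm{\mathfrak{S}_2^{r/4}u}_{L^2_x}$ from Proposition \ref{fractional bi-stokes operator domain proposition}, and conclude convergence and the uniform bound from the vanishing tail of the convergent series, exactly as in Lemma \ref{Q^N is convergent}. Your explicit treatment of the endpoints $r=0$ and $r=2$ is a small refinement the paper leaves implicit, but the argument is otherwise identical.
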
\medskip

\begin{proof}
    Let $u\in H^r_{d,x}$ so that
    \begin{equation*}
        u = \sum_{k=1}^{\infty} \langle u,a_k \rangle a_k
    \end{equation*}
    
    Then, using Proposition \ref{fractional bi-stokes operator domain proposition}:
    
    \begin{equation*}
        \norm{u}_{H^r_x}^2 \equiv \norm{(\mathfrak{S}_2)^{\frac{r}{4}}u}_{L^2_x}^2 = \sum_{k=1}^{\infty} \alpha_k^{\frac{r}{2}} \abs{\langle u,a_k \rangle}^2 < \infty
    \end{equation*}\medskip
    
    Since the sum is finite, the sequence constituting the series must tend to zero. Thus,
    
    \begin{equation*}
        \norm{P^N u - u}_{H^r_x}^2 = \norm{\sum_{k=N+1}^{\infty}\langle u,a_k \rangle a_k}_{H^r_x}^2 \equiv \sum_{k=N+1}^{\infty} \alpha_k^{\frac{r}{2}} \abs{\langle u,a_k \rangle}^2 \xrightarrow[N\rightarrow\infty]{} 0
    \end{equation*}\medskip
    
    Moreover,
    
    \begin{equation*}
        \norm{P^N u}_{H^r_x}^2 \equiv \norm{(\mathfrak{S}_2)^{\frac{r}{4}} P^N u}_{L^2_x}^2 \le \norm{(\mathfrak{S}_2)^{\frac{r}{4}} u}_{L^2_x}^2 \equiv \norm{u}_{H^r_x}^2
    \end{equation*}
\end{proof}

\medskip

Given the regularity of the initial conditions, we deduce the convergence of the truncated initial conditions by applying Lemma \ref{Q^N is convergent} and Theorem \ref{P^N is convergent}.

\begin{cor} [Truncated initial conditions are convergent] \label{truncated initial conditions are convergent}
    If $\psi_0\in H^{\frac{5}{2}+\delta}_0$ and $u_0\in H^{\frac{3}{2}+\delta}_d$, then $\psi_0^N \xrightarrow[N\rightarrow \infty]{H^{\frac{5}{2}+\delta}} \psi_0$ and $u_0^N \xrightarrow[N\rightarrow \infty]{H^{\frac{3}{2}+\delta}} u_0$.
\end{cor}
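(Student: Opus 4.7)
The plan is to observe that this corollary is a direct specialization of Lemma \ref{Q^N is convergent} (for the wavefunction) and Theorem \ref{P^N is convergent} (for the velocity), once we verify that the regularity indices of the initial data fall within the admissible ranges of these two results.

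First I would check the range condition for $\psi_0$. Since $\delta\in(0,\tfrac{1}{2})$, we have $\tfrac{5}{2}+\delta\in(\tfrac{5}{2},3)\subset[0,5]$, so the hypothesis $s\in[0,5]$ in Lemma \ref{Q^N is convergent} is satisfied with $s=\tfrac{5}{2}+\delta$. Since $\psi_0\in H^{\frac{5}{2}+\delta}_{0}(\Omega)$ by assumption, Lemma \ref{Q^N is convergent} applied with this choice of $s$ immediately yields
\begin{equation*}
\psi_0^N = Q^N\psi_0 \xrightarrow[N\to\infty]{H^{\frac{5}{2}+\delta}}\psi_0,
\end{equation*}
together with the uniform bound $\lVert\psi_0^N\rVert_{H^{\frac{5}{2}+\delta}_x}\lesssim\lVert\psi_0\rVert_{H^{\frac{5}{2}+\delta}_x}$.

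Next I would repeat the same reasoning for $u_0$. Since $\delta\in(0,\tfrac{1}{2})$, we have $\tfrac{3}{2}+\delta\in(\tfrac{3}{2},2)\subset[0,2]$, so the hypothesis $r\in[0,2]$ in Theorem \ref{P^N is convergent} is satisfied with $r=\tfrac{3}{2}+\delta$. Since $u_0\in H^{\frac{3}{2}+\delta}_{d}(\Omega)$ by assumption, Theorem \ref{P^N is convergent} gives
\begin{equation*}
u_0^N = P^N u_0 \xrightarrow[N\to\infty]{H^{\frac{3}{2}+\delta}}u_0,
\end{equation*}
along with the corresponding uniform bound in the $H^{\frac{3}{2}+\delta}$ norm.

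Since both convergences follow by direct invocation of previously established results, there is no real obstacle here; the only thing worth flagging is the verification that the chosen Sobolev indices lie in the allowed ranges $[0,5]$ and $[0,2]$ respectively, which is what constrains the admissible window $\delta\in(0,\tfrac{1}{2})$ used throughout the paper. No further estimates, interpolation arguments, or density considerations are needed beyond those already embedded in Lemma \ref{Q^N is convergent} and Theorem \ref{P^N is convergent}.
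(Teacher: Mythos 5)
Your proposal is correct and matches the paper's own argument, which likewise deduces the corollary directly from Lemma \ref{Q^N is convergent} and Theorem \ref{P^N is convergent}; your explicit check that $\tfrac{5}{2}+\delta\in[0,5]$ and $\tfrac{3}{2}+\delta\in[0,2]$ is the only detail the paper leaves implicit.
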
\medskip

\subsubsection{The initial density} \label{the initial density}

Given the initial density field $\rho_0 \in L^2_x \subset L^{\infty}_x$, we consider an approximating sequence $\rho^N_0 \in C^1_x$, such that $\rho_0^N \xrightarrow[N\rightarrow\infty]{L^2}\rho$, and $m\le \rho_0^N \le M$. (Recall that $m\le \rho_0 \le M$.) This approximating sequence may be constructed as follows. For each $N\in\N$, define $\Omega_{\frac{1}{N}} = \Omega \cup \{ x\notin \Omega : dist(x,\partial\Omega) < \frac{1}{N} \}$, so that $\Omega_{\frac{1}{N+1}} \Subset \Omega_{\frac{1}{N}}$ for all $N$. Now, extend $\rho_0$ to $\Tilde{\rho_0}$ over $\Omega_{1}$.

\begin{equation} \label{extended density}
    \renewcommand{\arraystretch}{2}
    \Tilde{\rho_0} = 
    \left\{ 
    \begin{array}{c} 
    \begin{aligned}
        \rho_0 \qquad &x\in \Omega \\ 
        m \qquad &x\in \Omega_{1}\setminus\Omega
    \end{aligned}  
    \end{array} 
    \right.
\end{equation}\medskip

Define a mollifier $\zeta:\R^3 \mapsto \R^+$. It is a smooth, compactly supported (on the unit ball), non-negative function with unit mass, i.e., $\int_{B_1}\zeta = 1$. Here, $B_r$ is a ball of radius $r$ (centered at the origin). We will now scale this mollifier in a mass-invariant way: $\zeta_{\frac{1}{N}} (x) := N^3 \zeta\left( Nx \right)$. This means that the support of $\zeta_{\frac{1}{N}}$ is in $B_{\frac{1}{N}}$. The approximating sequence is obtained through convolution with the mollifier, and restriction to $\Omega$, i.e.,

\begin{equation}
    \rho_0^N = \left. \left(\zeta_{\frac{1}{N}} * \Tilde{\rho_0}\right) \right|_{\Omega}
\end{equation}\medskip

The $\rho_0^N$ are obviously smooth since convolution upgrades regularity. They are also bounded as required, because $\Tilde{\rho_0} \in [m,M]$ and the mollifier has unit mass. Since $\Omega\Subset\Omega_1$, we have $\Tilde{\rho_0}\in L^p(\Omega) = L^p_{loc}(\Omega_1)$ for $p\in [1,\infty)$, which implies (from Theorem 6 in Appendix C of \cite{Evans2010PartialEquations}) that $\rho_0^N \xrightarrow[N\rightarrow\infty]{L^p(\Omega)} \Tilde{\rho_0}$. But, by construction, $\Tilde{\rho_0} = \rho_0$ in $\Omega$; therefore, $\rho_0^N \xrightarrow[N\rightarrow\infty]{L^p(\Omega)} \rho_0$.\medskip

\subsection{Approximate equations}

\subsubsection{The continuity equation}

Having described the (truncated) initial conditions and the semi-Galerkin scheme, we will now establish the existence of solutions to the ``approximate" equations, starting with the continuity equation.

\begin{equation} \label{approximate continuity equation}
    \begin{aligned}
        \partial_t \rho^N + u^N\cdot\nabla\rho^N &= 2\Lambda\Re (\overline{\psi^N}B^N\psi^N) \\
        \rho^N(0,x) &= \rho^N_0(x)
    \end{aligned}
\end{equation}\medskip

\noindent where $B^N = -\frac{1}{2}\Delta + \frac{1}{2}\abs{u^N}^2 + iu^N\cdot\nabla + \mu\abs{\psi^N}^2$.\medskip

Just as in \eqref{constraint to choose existence time}, we see that the constraint that fixes the local existence time $T_N$ for \eqref{approximate continuity equation} is:

\begin{equation} \label{constraint to choose approximate existence time}
    2\Lambda T_N^{\frac{1}{2}} \norm{\psi^N}_{L^{\infty}_{[0,T_N]}L^{\infty}_x} \norm{B^N\psi^N}_{L^2_{[0,T_N]}L^{\infty}_x} \le m-\varepsilon
\end{equation}\medskip

Recall that $T_N$ is also updated based on Definition \ref{updated local existence time}, to accommodate the Gr\"onwall inrequality calculation in the a priori bounds. 
Now, using Lemma \ref{sobolev embeddings}, the a priori estimate for $\psi$ in \eqref{higher order inclusion implication} and for $B\psi$ in \eqref{Bpsi sought-after bound}, and Lemma \ref{Q^N is convergent}, we can choose a local existence time that is independent of $N$.

\begin{align*}
    \norm{\psi^N}_{L^{\infty}_{[0,T_N]} L^{\infty}_x} &\lesssim \norm{\psi^N}_{L^{\infty}_{[0,T_N]} H^2_x} \\
    &\lesssim 1 + \nu\norm{u^N_0}_{H^1_x} + \norm{\psi^N_0}_{H^2_x} \\ 
    &\lesssim 1 + \nu\norm{u_0}_{H^1_x} + \norm{\psi_0}_{H^2_x}
\end{align*}\medskip

Similarly, 

\begin{align*}
    \norm{B^N\psi^N}_{L^2_{[0,T_N]}L^{\infty}_x} &\lesssim \norm{B^N\psi^N}_{L^2_{[0,T_N]} H^{\frac{3}{2}+\delta}_x} \\ 
    &\lesssim \Lambda^{-\frac{1}{2}} \left(Q_{T_N}^{\frac{1}{2}} + 1 \right) e^{cQ_{T_N}} \norm{\psi^N_0}_{H^{\frac{3}{2}+\delta}_x} \\
    &\lesssim \Lambda^{-\frac{1}{2}} \left(Q_{T_N}^{\frac{1}{2}} + 1 \right) e^{cQ_{T_N}} \norm{\psi_0}_{H^{\frac{3}{2}+\delta}_x}
\end{align*}\medskip

\noindent where $Q_{T_N}$ is defined in \eqref{defining Q_T}. Thus, substituting these estimates into \eqref{constraint to choose approximate existence time} gives:

\begin{equation} \label{final constraint to choose approximate existence time}
    2c\Lambda^{\frac{1}{2}} T_N^{\frac{1}{2}} \left( 1 + \nu\norm{u_0}_{H^1_x} + \norm{\psi_0}_{H^2_x} \right) \left(Q_{T_N}^{\frac{1}{2}} + 1 \right) e^{cQ_{T_N}} \norm{\psi_0}_{H^{\frac{3}{2}+\delta}_x} \le m-\varepsilon
\end{equation}\medskip

It is sufficient to choose $T_N$ small enough to satisfy \eqref{final constraint to choose approximate existence time}. Since no mention of the index $N$ is made in the constraint in any of the initial conditions, it is clear that $T_N$ can be chosen independent of $N$. Having arrived at the local existence time, we will now establish the analogs of Lemmas 2.2 and 2.3 from \cite{Kim1987WeakDensity}. These constitute the existence of solutions to \eqref{approximate continuity equation} and a convergence result for the same, respectively. 

\begin{lem} \label{existence of solutions to approx continuity equation}
    Let $u^N \in C^0_{[0,T]} C^1_{\Bar{\Omega}}$ and $\overline{\psi^N}B^N \psi^N\in L^1_{[0,T]} L^{\infty}_x$ (uniformly in $N$), with $u^N(t,\partial\Omega) = 0$ and $\nabla\cdot u^N (t,\Bar{\Omega})$ for $t\in [0,T]$. Then, \eqref{approximate continuity equation} has a unique solution $\rho^N \in C^1_{[0,T]}C^1_x$.
\end{lem}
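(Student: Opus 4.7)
The plan is to solve \eqref{approximate continuity equation} by the method of characteristics, since it is a first-order linear transport equation with a prescribed source. For each $\alpha \in \bar\Omega$, I would first construct the characteristic flow $X(t;\alpha)$ solving the ODE
\begin{equation*}
\frac{d}{dt}X(t;\alpha) = u^N(t,X(t;\alpha)), \qquad X(0;\alpha)=\alpha.
\end{equation*}
Because $u^N \in C^0_{[0,T]}C^1_{\bar\Omega}$, the classical Cauchy--Lipschitz theorem gives a unique $C^1$-in-$\alpha$ solution on $[0,T]$. The hypothesis $u^N(t,\partial\Omega)=0$ makes $\partial\Omega$ invariant under the flow, so $X(t;\cdot)$ maps $\bar\Omega$ into itself; and $\nabla\cdot u^N = 0$ gives, via Liouville's formula, a volume-preserving $C^1$-diffeomorphism of $\bar\Omega$ onto itself, with a $C^1$ inverse denoted $Y(t,\cdot)$.

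Next, integrating \eqref{approximate continuity equation} along characteristics yields the explicit candidate
\begin{equation*}
\rho^N(t,x) = \rho_0^N(Y(t,x)) + 2\Lambda \int_0^t \Re\bigl(\overline{\psi^N}B^N\psi^N\bigr)\bigl(\tau, X(\tau;Y(t,x))\bigr)\, d\tau,
\end{equation*}
which can be verified directly to satisfy the PDE and the initial condition. Spatial $C^1$ regularity of $\rho^N$ follows by chaining the $C^1$ regularity of $\rho_0^N$ (guaranteed by the mollification construction in Section \ref{the initial density}), of the diffeomorphism $Y(t,\cdot)$, and of the integrand (since $\psi^N, u^N$ are finite linear combinations of smooth basis functions $b_k, a_k$ and are therefore $C^\infty$ in $x$). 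The time regularity is then read off from the PDE itself: $\partial_t \rho^N = -u^N \cdot \nabla \rho^N + 2\Lambda\Re(\overline{\psi^N}B^N\psi^N)$, where the transport term is continuous in $t$ and the source carries whatever time regularity $\overline{\psi^N}B^N\psi^N$ possesses from the Galerkin construction.

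For uniqueness, the difference of any two solutions satisfies the homogeneous transport equation with zero initial datum, and is therefore identically zero along every characteristic. The main obstacle I foresee is the mismatch between the bare $L^1_{[0,T]}L^\infty_x$ hypothesis on the source and the advertised $C^1_{[0,T]}C^1_x$ regularity of $\rho^N$: obtaining continuous (let alone $C^1$) time derivatives of $\rho^N$ from a merely $L^1_t$ source is impossible in general, so the proof must implicitly exploit the stronger time regularity of $\overline{\psi^N}B^N\psi^N$ inherited from the ODE system governing the Galerkin coefficients $d_k^N(t), c_k^N(t)$ (presumably continuity in $t$). Everything else $-$ existence, uniqueness, boundary invariance, spatial smoothness $-$ is a routine consequence of the characteristic representation.
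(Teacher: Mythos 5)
Your proposal follows essentially the same route as the paper: solve the characteristic ODE (using $u^N(t,\partial\Omega)=0$ so that $\bar{\Omega}$ is invariant, and incompressibility so that the flow map is a volume-preserving $C^1$ diffeomorphism with inverse $y^N(t,\cdot)$), write $\rho^N$ explicitly as the transported initial datum plus the time-integrated source along characteristics, verify the PDE directly, and obtain uniqueness from the homogeneous transport equation; the paper's only extra technical wrinkle is first extending $u^N$ to a slightly larger open set before constructing the characteristics. Your closing caveat is also consistent with the paper's proof: the advertised $C^1$-in-time regularity cannot come from the bare $L^1_{[0,T]}L^\infty_x$ hypothesis on the source, and the paper likewise relies implicitly on the continuity in time (and spatial smoothness) of $\overline{\psi^N}B^N\psi^N$ inherited from the finite-dimensional Galerkin construction.
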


\begin{proof}
    To avoid having to deal with problems of derivatives at the boundary, let us extend $u^N$ to $w^N\in C^0_{[0,T]} C^1_x$, such that:
    
    \begin{enumerate}[(i)]
        \item $w^N = u^N \ \forall \ (t,x)\in ([0,T]\times\Bar{\Omega})$; in particular, $w^N = 0$ on the boundary
        \item $w^N$ is supported on an open set $E^N$ such that $\Omega \Subset E^N$
    \end{enumerate}\medskip
    
    Consider the evolution equation for the characteristics of the flow.
    
    \begin{equation} \label{evolution of the characteristics}
        \begin{aligned}
            \frac{dx^N}{dt} &= w^N(t,x^N(t)) \\
            x^N(0) &= y^N \in \Bar{\Omega}
        \end{aligned}
    \end{equation}\medskip
    
    Since $w^N \in C^0_{[0,T]} C^1_x$, there exists a unique solution $x^N(t,y^N) \in C^1_{[0,\Tilde{T}]} C^1_{\Bar{\Omega}}$ for some $0<\Tilde{T}\le T$. Because $w^N = u^N = 0$ on $\partial\Omega$, for any $y^N\in\partial\Omega$, we have $x^N(t,y^N) = y^N\in\partial\Omega$. This implies that characteristics starting inside/on/outside the boundary, remain inside/on/outside the boundary. Thus, by uniqueness of the solution, $\Tilde{T}=T$ and $x^N(t,y^N) \in C^1_{[0,T]} C^1_{\Bar{\Omega}}$ for $u^N\in C^0_{[0,T]}C^1_{\Bar{\Omega}}$.\medskip
    
    Owing to the incompressibility of the flow $u^N$, it follows that $\det\left( \frac{\partial x^N_i}{\partial y^N_j}\right) = 1$, allowing us to conclude that the characteristics are $C^1$ diffeomorphisms and therefore, invertible:
    
    \begin{equation*}
        y^N = S^{-1}_t x^N := y^N(t,x^N)
    \end{equation*}\medskip
    
    We will now define the solution to \eqref{approximate continuity equation} along characteristics:
    
    \begin{equation} \label{solution to approx continuity eqn, along characteristics}
        \rho^N(t,x) = \rho^N_0\left(y^N(t,x)\right) + 2\Lambda\int_0^t \Re\left( \overline{\psi^N}B^N \psi^N \right) \left( \tau,y^N(t-\tau,x) \right) d\tau
    \end{equation}\medskip
    
    That \eqref{solution to approx continuity eqn, along characteristics} uniquely solves \eqref{approximate continuity equation} can be easily checked using the following property of the ``inverse-characteristics" $y(t,x)$. For any $\tau\in\R$,
    
    \begin{align*}
        \frac{\partial}{\partial t}y(t,x) &= \lim_{\Delta t\rightarrow 0} \frac{y(t-\tau+\Delta t,x) - y(t-\tau,x)}{\Delta t} \\
        &= \lim_{\Delta t\rightarrow 0} \frac{x(t+\Delta t,y) - x(t,y)}{\Delta t} \cdot \frac{y(t-\tau+\Delta t,x) - y(t-\tau,x)}{x(t+\Delta t,y) - x(t,y)} \\
        &= u(t,x) \cdot \frac{\partial_t y(t-\tau,x)}{\partial_t x(t,y)} \\
        &= -u(t,x)\cdot\nabla_{x}y(t-\tau,x)
    \end{align*}\medskip
    
    The last equality is due to Euler's chain rule (also known as the triple product rule). Furthermore, by choosing an appropriately small existence time $T$ (as before), we can ensure that for $m\le \rho^N_0 \le M$, we have $m\le \rho^N (t,\Bar{\Omega}) \le M$ for $t\in [0,T]$.
    
\end{proof}

\medskip

Now, we will consider a convergent sequence of velocities and wavefunctions that belong to the finite-dimensional subspaces spanned by the truncated Galerkin scheme. Given such a convergent sequence, we show that the sequence of density fields satisfying \eqref{approximate continuity equation} is also convergent, and this will be used to complete a contraction mapping argument later on.

\begin{lem} \label{convergence of solutions to approx continuity equation}
   For $n\in\N$, let $u^N_n \in C^0_{[0,T]} C^1_{\Bar{\Omega}}$ and $\overline{\psi^N_n}B^N_n \psi^N_n\in L^1_{[0,T]} L^{\infty}_x$ (uniformly in $n$), with $u^N_n(t,\partial\Omega) = 0$ and $\nabla\cdot u^N_n (t,\Bar{\Omega})$ for $t\in [0,T]$. Denote by $\rho^N_n \in C^1_{[0,T]}C^1_x$ the unique solution to the system:
   
   \begin{equation} \label{sequence approximate continuity equation}
       \begin{aligned}
           \partial_t \rho^N_n + u^N_n\cdot\nabla\rho^N_n &= 2\Lambda\Re (\overline{\psi^N_n}B^N_n\psi^N_n) \\
        \rho^N_n(0,x) &= \rho^N_0(x) \in C^1_x
       \end{aligned}
   \end{equation}\medskip
   
   If $u^N_n \xrightarrow[n\rightarrow\infty]{C^0_{[0,T]}C^1_{\Bar{\Omega}}} u^N$ and $\psi^N_n \xrightarrow[n\rightarrow\infty]{C^0_{[0,T]}C^3_{\Bar{\Omega}}} \psi^N$, then $\rho^N_n \xrightarrow[n\rightarrow\infty]{C^0_{[0,T]}C^0_{\Bar{\Omega}}} \rho^N$, where $\rho^N$ solves \eqref{approximate continuity equation}.

\end{lem}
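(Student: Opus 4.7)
The plan is to exploit the explicit characteristic representation
\begin{equation*}
    \rho^N_n(t,x) = \rho^N_0(y^N_n(t,x)) + 2\Lambda\Re\int_0^t (\overline{\psi^N_n}B^N_n\psi^N_n)(\tau, y^N_n(t-\tau,x))\,d\tau
\end{equation*}
derived in Lemma \ref{existence of solutions to approx continuity equation} (and the analogous formula for $\rho^N$), and to pass to the limit by controlling the characteristic flows and the source term separately. The proof will decompose $\rho^N_n - \rho^N$ into a boundary-data piece and a forcing piece, both of which will be estimated through a Gr\"onwall-type argument for the flow maps combined with uniform convergence of $B^N_n\psi^N_n$.

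First, I would extend $u^N_n$ and $u^N$ to divergence-free fields $w^N_n, w^N\in C^0_{[0,T]}C^1_x$ supported in a fixed neighborhood $E^N$ of $\bar{\Omega}$, as in the proof of Lemma \ref{existence of solutions to approx continuity equation}, chosen so that $w^N_n\to w^N$ in $C^0_{[0,T]}C^1_x$. Let $x^N_n(t,y)$, $x^N(t,y)$ solve \eqref{evolution of the characteristics} for these extended fields. Subtracting the ODEs and using that $\|\nabla w^N\|_{L^\infty_{t,x}}\le K$ uniformly, a standard Gr\"onwall argument yields
\begin{equation*}
    \|x^N_n(t,\cdot) - x^N(t,\cdot)\|_{L^\infty_y} \le e^{KT}\,T\,\|w^N_n - w^N\|_{C^0_{[0,T]}C^0_x} \xrightarrow[n\to\infty]{} 0
\end{equation*}
uniformly in $t\in[0,T]$. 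Since the flows are $C^1$ diffeomorphisms with unit Jacobian, the identical argument applied to the inverse flows (or equivalently, a change of variable) gives $y^N_n \to y^N$ uniformly on $[0,T]\times\bar{\Omega}$ as well.

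Next, I would establish that $\overline{\psi^N_n}B^N_n\psi^N_n \to \overline{\psi^N}B^N\psi^N$ in $C^0_{[0,T]}C^1_{\bar\Omega}$. Expanding
\begin{equation*}
    B^N_n\psi^N_n = -\tfrac{1}{2}\Delta\psi^N_n + iu^N_n\cdot\nabla\psi^N_n + \tfrac{1}{2}|u^N_n|^2\psi^N_n + \mu|\psi^N_n|^2\psi^N_n,
\end{equation*}
the hypothesized convergences $\psi^N_n \to \psi^N$ in $C^0_{[0,T]}C^3_{\bar\Omega}$ and $u^N_n\to u^N$ in $C^0_{[0,T]}C^1_{\bar\Omega}$ give uniform convergence of each term (with one derivative to spare for the nonlinear pieces), so in particular the limit $\overline{\psi^N}B^N\psi^N$ is uniformly Lipschitz in $x$ on $[0,T]\times\bar{\Omega}$.

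Finally, I would write $\rho^N_n(t,x) - \rho^N(t,x) = I_1 + I_2$, where $I_1 = \rho^N_0(y^N_n) - \rho^N_0(y^N)$ and $I_2$ is the difference of the two time integrals. Since $\rho^N_0\in C^1_x$, $|I_1| \le \|\nabla\rho^N_0\|_{L^\infty_x}\|y^N_n - y^N\|_{L^\infty_{[0,T]}L^\infty_x}\to 0$ by the first step. For $I_2$, inserting and subtracting $(\overline{\psi^N}B^N\psi^N)(\tau,y^N_n(t-\tau,x))$ splits it into two pieces: the first is bounded by $T\|\overline{\psi^N_n}B^N_n\psi^N_n - \overline{\psi^N}B^N\psi^N\|_{C^0_{[0,T]}C^0_x}$ which vanishes by the second step, and the second is bounded by $T$ times the Lipschitz constant of $\overline{\psi^N}B^N\psi^N$ in $x$ times $\|y^N_n - y^N\|_{L^\infty_{[0,T]}L^\infty_x}$, which vanishes by the first step. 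Combining, $\rho^N_n \to \rho^N$ uniformly on $[0,T]\times\bar{\Omega}$, and the limit satisfies \eqref{approximate continuity equation} since both sides of the characteristic formula pass to the limit continuously. The main technical obstacle will be ensuring all Gr\"onwall estimates and Lipschitz bounds remain uniform in $n$ throughout $[0,T]$, which hinges on the uniform-in-$n$ hypotheses placed on the sequences $u^N_n$ and $\psi^N_n$.
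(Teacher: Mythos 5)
Your proposal is correct and follows essentially the same route as the paper: both rest on the characteristic representation of $\rho^N_n$, uniform convergence of the inverse flow maps $y^N_n$, uniform convergence of the source $\overline{\psi^N_n}B^N_n\psi^N_n$ together with a uniform-in-$n$ gradient (Lipschitz) bound coming from the $C^0_t C^3_x$ convergence of $\psi^N_n$, and the same two-term splitting of $\rho^N_n-\rho^N$. The only cosmetic difference is that you obtain $y^N_n\to y^N$ by a Gr\"onwall estimate on the (reversed) flows, whereas the paper uses the transport identity $\partial_t y^N_n = -u^N_n\cdot\nabla_x y^N_n$ and Arzel\`a--Ascoli; both are adequate.
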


\begin{proof}
    First, let us define $\Psi^N_n := 2\Lambda\Re(\Bar{\psi^N_n}B^N_n\psi^N_n)$. Since $u^N_n\in C^0_t C^1_{\Bar{\Omega}}$, there exists a sequence of characteristics $x^N_n(t,y)\in C^1_t C^1_{\Bar{\Omega}}$ corresponding to the flow, i.e., solving $\frac{dx^N_n}{dt} = u^N_n (t,x^N_n)$ with $x^N_n(0,y) = y$. The assumed convergence of $u^N_n$ allows us to conclude that $x^N_n \xrightarrow[n\rightarrow\infty]{C^1_t C^1_{\Bar{\Omega}}}x^N$. \medskip
    
    Consider the map $y\mapsto x^N_n(t,y)$ and define its inverse $y^N_n(t,x)$; this is just the inverse of the characteristic, i.e., if the flow were reversed. Due to the flow being incompressible, we know that the matrix $\frac{\partial y^N_n}{\partial x}$ is invertible. Also, as shown in the proof of the previous lemma, $\frac{\partial}{\partial t}y^N_n = -u^N_n\cdot\nabla_{x}y^N_n$. This implies that the derivatives of $y^N_n$ with respect to both space and time are bounded uniformly in $n$, $t$ and $x$. Thus, by the Arzela-Ascoli theorem, we can extract a subsequence that converges uniformly: $y^N_n\xrightarrow[n\rightarrow\infty]{C^0_t C^0_{\Bar{\Omega}}}y^N$. Just as before, we can show that the solution to \eqref{sequence approximate continuity equation} is
    
    \begin{equation} \label{solution to sequence approx continuity eqn, along characteristics}
        \rho^N_n(t,x) = \rho^N_0\left(y^N_n(t,x)\right) + \int_0^t \Psi^N_n \left( \tau,y^N_n(t-\tau,x) \right) d\tau
    \end{equation}\medskip
    
    Therefore,
    
    \begin{multline*}
        \rho^N_n(t,x) - \rho^N(t,x) = \rho^N_0\left(y^N_n(t,x)\right) + \int_0^t \Psi^N_n \left( \tau,y^N_n(t-\tau,x) \right) d\tau \\ - \rho^N_0\left(y^N(t,x)\right) - \int_0^t \Psi^N \left( \tau,y^N(t-\tau,x) \right) d\tau
    \end{multline*}\medskip
    
    \noindent which leads to
    
    \begin{align*}
        \abs{\rho^N_n - \rho^N}_{C^0_{t,x}} &\le \abs{\rho^N_0\left(y^N_n\right) - \rho^N_0\left(y^N\right)}_{C^0_{t,x}} + T \abs{\Psi^N_n \left( t,y^N_n \right) - \Psi^N \left( t,y^N \right)}_{C^0_{t,x}} \\
        &\begin{multlined}\le \norm{\nabla\rho^N_0}_{L^{\infty}_x}\abs{y^N_n - y^N}_{C^0_{t,x}} \\ + T\left[ \norm{\nabla\Psi^N_n}_{L^{\infty}_t L^{\infty}_x}\abs{y^N_n - y^N}_{C^0_{t,x}} + \abs{\Psi^N_n - \Psi^N}_{C^0_{t,x}} \right]
        \end{multlined} \\
        &\xrightarrow[n\rightarrow\infty]{} 0
    \end{align*}\medskip
    
    Given the convergence of $y^N_n$ derived above, and because $\rho^N_0\in C^1_x$, the first term on the RHS vanishes. The second and third terms vanish on account of the following argument. Note that $\Psi^N_n$ has its highest order term of the form $\psi^N_n\Delta\psi^N_n$ (second derivative), and so the  assumed convergence of $\psi^N_n$ in the $C^0_t C^3_x$ norm implies that $\Psi^N_n$ converges in $C^0_t C^1_x$. This also guarantees that $\norm{\nabla\Psi^N_n}_{L^{\infty}_{t} L^{\infty}_x}$ is finite, uniformly in $n$. 
    
\end{proof}


\subsubsection{The Navier-Stokes equation}

Suppose that the existence time has been chosen so that the density $\rho^N\in C^1_{t,x}$ is bounded below. We will now consider an ``approximate momentum equation", composed of the truncated wavefunction and velocity fields defined by \eqref{truncated wavefunction definition} and \eqref{truncated velocity definition}, respectively.

\begin{equation} \label{approximate NSE}
    \rho^N\partial_t u^N + \rho^N u^N\cdot\nabla u^N + \nabla\Tilde{p}^N - \nu\Delta u^N = - 2\Lambda\Im\left( \nabla\overline{\psi^N}B^N\psi^N \right) - 2\Lambda u^N\Re\left(\overline{\psi^N}B^N\psi^N \right)
\end{equation}\medskip

Recall that the incompressiblity condition is built-in, because the eigenfunction basis used to construct the velocity fields are divergence-free. Now, taking the $L^2$ inner product of \eqref{approximate NSE} with $a_j(x)$ for $1\le j\le N$, we arrive at a system of equations for the coefficients describing the time-dependence of the truncated velocity fields.

\begin{equation} \label{expansion of approximate NSE}
    \sum_{k=1}^N R^N_{jk}(t) \frac{d}{dt}c^N_k(t) = -\nu \sum_{k=1}^N D_{jk} c^N_k(t) - \sum_{k,l=1}^N \mathcal{N}^N_{jkl}(t) c^N_k(t) c^N_l(t) - 2\Lambda S^N_j[t,c^N]
\end{equation}\medskip

\noindent where

\begin{gather*}
    R^N_{jk}(t) = \int_{\Omega}\rho^N a_j\cdot a_k \\
    D_{jk} = \int_{\Omega} (\nabla a_j):(\nabla a_k) \\
    \mathcal{N}^N_{jkl}(t) = \int_{\Omega}\rho^N \left( a_k\cdot\nabla \right) a_l \cdot a_j \\
    S^N_j(t,c^N) = \int_{\Omega}a_j\cdot \left[ \Im\left( \nabla\overline{\psi^N}B^N(c^N)\psi^N \right) + u^N(c^N)\Re\left(\overline{\psi^N}B^N(c^N)\psi^N \right) \right]
\end{gather*}\medskip

Since we have both lower and upper bounds on the density in the chosen interval of time, we can use Lemma 2.5 in \cite{Kim1987WeakDensity} to show that the matrix $R^N(t)$ is invertible. Therefore, we arrive at the desired evolution equation (written vectorially) for the coefficients $c^N_j(t)$.

\begin{equation} \label{evolution equation for c^N}
    \frac{d}{dt}c^N = -\nu (R^N)^{-1} D\cdot c^N - (R^N)^{-1}\left[ \mathcal{N}^N : c^N \otimes c^N \right] -2\Lambda (R^N)^{-1} S^N(t,c^N)
\end{equation}\medskip

\subsubsection{The nonlinear Schr\"odinger equation}

As in the previous section, we will derive an evolution equation for the coefficients of the truncated wavefunction, by considering an ``approximate NLS".

\begin{equation} \label{approximate NLS}
    \partial_t \psi^N = -\frac{1}{2i}\Delta \psi^N - \Lambda B^N_L \psi^N - (\Lambda + i)\mu \abs{\psi^N}^2 \psi^N
\end{equation}\medskip

Recall that $B_L = B - \mu \abs{\psi}^2$, i.e., the linear (in $\psi$) part of the coupling operator. Performing an $L^2$ inner product with $b_j(x)$:

\begin{equation}
    \frac{d}{dt}d^N_j (t) = \frac{1}{2i} \beta_j^{\frac{1}{5}} d^N_j(t) - \Lambda\sum_{k=1}^N L^N_{jk} (t) d^N_k(t) - (\Lambda + i)\mu \sum_{k,l,m=1}^N G_{jklkm} d^N_k \overline{d^N_l} d^N_m (t)
\end{equation}\medskip

\noindent where

\begin{gather*}
    L^N_{jk}(t) = \int_{\Omega} b_j B^N_L b_k = \frac{1}{2}\int_{\Omega} \nabla b_j\cdot \nabla b_k + \frac{1}{2}\int_{\Omega} \abs{u^N}^2 b_j b_k + i\int_{\Omega} u^N\cdot\nabla b_k \ b_j \\
    G_{jklm} = \int_{\Omega} b_j b_k b_l b_m
\end{gather*}

Written vectorially, the evolution equation for the coefficients $d^N_j(t)$ becomes:

\begin{equation} \label{evolution equation for d^N}
    \frac{d}{dt}d^N = \frac{1}{2i} \ \mathcal{B} d^N - \Lambda \  L^N\cdot d^N - (\Lambda + i)\mu \ G :: (d^N\otimes\overline{d^N}\otimes d^N)
\end{equation}\medskip

\noindent where $\mathcal{B}_{ij} = \beta_i^{\frac{1}{5}}\delta_{ij}$. \medskip

\subsubsection{Fixed point argument for the coefficients} \label{Fixed point argument for the coefficients}

For a fixed $N$, we will now show that \eqref{evolution equation for c^N} and \eqref{evolution equation for d^N} have unique solutions that are continuous in $[0,T]$. For the remainder of this section, we will drop the superscript $N$ on the coefficients, for brevity. Furthermore, we will also use $c$ (and respectively $d$) to refer to a vector in $\R^N$ (and respectively in $\C^N$). We will not have any reason to call upon the individual coefficients $c_1, c_2, \dots c_N$ (and respectively $d_1, d_2, \dots d_N$). The subscripts used in this section will refer to the iterates used in the fixed point argument to construct solutions to \eqref{evolution equation for c^N} and \eqref{evolution equation for d^N}. \medskip

Let us start with initial vectors $c_0\in B_{r_c} \subset \R^N$ and $d_0\in B_{r_d} \subset\C^N$, where we recall that $B_r$ is a ball of radius $r$ centered at the origin. Now, we will define the iterative mild solutions to \eqref{evolution equation for c^N} and \eqref{evolution equation for d^N} as follows. For $n=1,2,3 \dots$,

\begin{align} \label{iterative mild solutions}
\begin{aligned}
    c_{n+1}(t) &= c_0 + \int_0^t RHS[c_n] d\tau \\
    d_{n+1}(t) &= d_0 + \int_0^t RHS[d_n] d\tau
\end{aligned}
\end{align}\medskip

\noindent where $RHS[c_n]$ is the RHS of \eqref{evolution equation for c^N} for the iterate $c_n$, and similarly for $RHS[d_n]$. For the inductive argument of the contraction mapping, let us assume that $\abs{c_n(t)}\le s_c$ and $\abs{d_n(t)}\le s_d$, for some positive real numbers $s_c, s_d$.\medskip

\begin{enumerate}
    \item \textit{The self-map}: From the polynomial structure of the nonlinearities on the RHS of \eqref{iterative mild solutions}, it is easy to see that:
    
    \begin{align} \label{self maps}
    \begin{aligned}
        \abs{c_{n+1}} &\le r_c + k_c T_c \left[ s_c + s_c^2 + (1+s_c)s_d^2 \left( (1+s_c)^2 + s_d^2 \right) \right] \\
        \abs{d_{n+1}} &\le r_d + k_d T_d \left[ s_d + (s_c^2 +s_c)s_d + s_d^3 \right]
    \end{aligned}
    \end{align}\medskip
    
    \noindent where $T_c$ and $T_d$ are the existence times for the iterative scheme, and $k_c, k_d$ are positive constants. We will first choose $s_i$ large enough that $r_i \le \frac{1}{2}s_i$, for $i\in \{c,d\}$. Then, we will choose $T_c, T_d$ small enough so that the second terms on the RHS of \eqref{self maps} satisfy $k_i T_i [\dots] \le r_i$, for $i\in \{c,d\}$. These choices ensure that starting from $c_n\in B_{r_c}$, we will end up at $c_{n+1}\in B_{r_c}$ (and similarly for $d_n$). \medskip
    
    \item \textit{The contraction}: Beginning from $(c_0,d_0)$, let there be two pairs of maps $(c_n,d_n)\mapsto (c_{n+1},d_{n+1})$ and $(c_n',d_n')\mapsto (c_{n+1}',d_{n+1}')$. This implies:
    
    \begin{align} \label{contractions}
    \begin{aligned}
        \abs{c_{n+1}-c_{n+1}'} &\le k_c T_c \abs{RHS[c_n] - RHS[c_n']} \\
        \abs{d_{n+1}-d_{n+1}'} &\le k_d T_d \abs{RHS[d_n] - RHS[d_n']}
    \end{aligned}
    \end{align}\medskip
    
    Since the ``RHS" terms are polynomials, one can easily make these contractions by choosing $T_c,T_d$ sufficiently small. At this stage, these times are functions of the size of the initial data $r_c,r_d$. Using a standard bootstrap argument, we can find a maximal time\footnote{This is the same as the local existence time defined earlier, due to the a priori estimates. The latter guarantee that as long as the density is bounded below, the energy of the system is bounded above, implying that the coefficients of time dependence are bounded.} and call it $T$.\medskip
\end{enumerate}

Using the Banach fixed point theorem, we conclude that the sequences $\{c_n(t),d_n(t)\}$ converge to $\{c(t),d(t)\}\in C[0,T]$, respectively.\medskip

\begin{rem} \label{restoring labels to the convergent iterates}
Recall that we dropped the superscript on the coefficients at the beginning of Section \ref{Fixed point argument for the coefficients}. Restoring this, we realize that the limits of the above iteration are actually $c^N(t)$ and $d^N(t)$, and each of them are $N-$dimensional vectors. In what follows, we will continue to use subscripts to refer to the different iterates, and not the components of the vector of coefficients. For instance, $c^N_n$ will denote the $n$-th iterate of the vector $c^N$.
\end{rem}\medskip

For a pair $(u^N_n,\psi^N_n)$, equivalently $(c^N_n,d^N_n)$, using Lemma \ref{existence of solutions to approx continuity equation}, we can find a solution $\rho^N_n$. Owing to the smoothness (in space) of the eigenfunctions used in the truncated velocity and wavefunction, and the above fixed point argument, it is easy to see that $u^N_n \xrightarrow[n\rightarrow\infty]{C^0_t C^1_x}u^N$ and $\psi^N_n \xrightarrow[n\rightarrow\infty]{C^0_t C^3_x}\psi^N$. Therefore, performing an iteration on the triplet $(c^N_n,d^N_n,\rho^N_n)$ and using Lemma \ref{convergence of solutions to approx continuity equation}, we conclude that the sequence $\rho^N_n$ converges to $\rho^N\in C^0_{[0,T]} C^0_{\Bar{\Omega}}$.\medskip

\begin{rem}
Note that by taking $\rho_0^N \in C^k_x$, we can get a solution $\rho^N\in C^0_t C^k_x$ in Lemma \ref{existence of solutions to approx continuity equation}. Similarly, in Lemma \ref{convergence of solutions to approx continuity equation}, with $\rho_0^N \in C^k_x$, we can easily show that $\rho^N_n \xrightarrow[n\rightarrow\infty]{C^0_t C^{k-1}_x}\rho^N$. The idea behind this is that $u^N,\psi^N$ have $C^{\infty}$ regularity in space, but only $C^0$ regularity in time.
\end{rem}\medskip

\subsection{Weak and strong convergences}

\subsubsection{Compactness arguments} \label{weak and strong convergences}

Let us now extract convergent subsequences from the a priori bounds in \eqref{final a priori bounds}.\medskip

\begin{enumerate}
    \item \textit{Density}: We know that $\rho^N \in C^0([0,T];C^0_x) \subset L^{\infty}(0,T;L^2_x)$. Also, from \eqref{approximate continuity equation}, 
    
    \begin{align} \label{partial_t rho^N uniformly bounded in L^2_t H^-1_x}
        \begin{aligned}
            \norm{\partial_t \rho^N}_{L^2_{[0,T]}H^{-1}_x} &\lesssim \norm{\nabla\cdot (u^N\rho^N)}_{L^2_{[0,T]}H^{-1}_x} + \norm{\Re (\overline{\psi^N}B^N\psi^N)}_{L^2_{[0,T]}H^{-1}_x} \\
            &\lesssim \norm{u^N\rho^N}_{L^2_{[0,T]}L^2_x} + \norm{ (\overline{\psi^N}B^N\psi^N)}_{L^2_{[0,T]}L^2_x} \\
            &\begin{multlined}
                \le T^{\frac{1}{2}}\norm{\sqrt{\rho^N}u^N}_{L^{\infty}_{[0,T]}L^2_x}\norm{\sqrt{\rho^N}}_{L^{\infty}_{[0,T]}L^{\infty}_x} \\ + \norm{ \psi^N}_{L^{\infty}_{[0,T]}L^{\infty}_x}\norm{ B^N\psi^N}_{L^2_{[0,T]}L^2_x}
            \end{multlined}
        \end{aligned}    
    \end{align}\medskip
    
    The second inequality is due to the (compact) embedding $L^2_x \subset H^{-1}_x$ for bounded domains. All the terms in the last line are (uniformly) finite by virtue of the a priori bounds in \eqref{final a priori bounds}. Therefore, using Lemma \ref{aubin-lions}, we conclude the following strong convergence\footnote{Refer to Section \ref{notation} for the notation used in the case of Sobolev spaces of the $x$-variable.} of a subsequence:
    
    \begin{equation} \label{strong convergence of density}
        \rho^N \xrightarrow[N\rightarrow \infty]{C^0_t H^{0^-}_x} \rho
    \end{equation}\medskip
    
    Consider a (relabeled) subsequence $\rho^N$ that strongly converges to $\rho$ in $C([0,T];H^{-1}_x)$. For a.e. $s,t\in [0,T]$ and any $\omega \in H^1_{0,x}$,
    
    \begin{multline*}
        \langle \rho^N(t)-\rho^N(s),\omega \rangle_{H^{-1}\times H^1_0} = \langle \int_s^t \partial_t \rho^N d\tau ,\omega \rangle_{H^{-1}\times H^1_0} \\ \le \int_s^t \norm{\partial_t \rho^N}_{H^{-1}_x} \norm{\omega}_{H^1_x}\le (t-s)^{\frac{1}{2}} \norm{\partial_t \rho^N}_{L^2_{[0,T]} H^{-1}_x} \norm{\omega}_{H^1_x}
    \end{multline*}\medskip
    
    \noindent showing that $\langle \rho^N(t),\omega \rangle_{H^{-1}\times H^1_0}$ is uniformly continuous in $[0,T]$, uniformly in $N$. This, along with Lemma \ref{weak-continuity in time}, allows us to conclude that $\rho^N$ is relatively compact in $C_w([0,T];L^2_x)$. \medskip
    
    Now, we will extend the strong convergence of the approximate density fields to include the strong $L^2$ topology in space, i.e., we want to show that $\rho^N$ (or an appropriate subsequence) converges to $\rho$ strongly in $C([0,T];L^2_x)$. For this, we will adapt a classical argument (see, for instance, Theorem 2.4 in \cite{Lions1996MathematicalMechanics}). First of all, we will need to perform a mollification, so we will extend the density field $\rho$, which is in $L^{\infty}([0,T]\times\Omega)$, to all of $\R^3$ by simply defining the density outside $\Omega$ to be $m$. Observe that we can also extend the velocity $u$ (and its first derivative) and the wavefunction $\psi$ (and its first three derivatives) to be identically zero outside $\Omega$. Combining this with the fact that characteristics starting inside/on/outside $\partial\Omega$ remain inside/on/outside $\partial\Omega$ (see proof of Lemma \ref{existence of solutions to approx continuity equation}) tells us that the density outside the domain remains $m$ at all times. Now, just as in Section \ref{the initial density}, we will define a sequence of mollifiers $\zeta_{h}(x) = \frac{1}{h^3} \zeta\left( \frac{x}{h} \right)$, where $h$ will eventually be taken to 0. We are now ready to establish the well-known ``renormalized solutions" of the continuity equation. Consider a weak solution $\rho$ of \eqref{continuity}, and mollify the equation to obtain:
    
    \begin{equation} \label{mollified continuity equation}
        \partial_t \rho_h + u\cdot\nabla\rho_h = \Psi_h + r_h
    \end{equation}\medskip
    
    \noindent where $g_h = g * \zeta_h$, $\Psi = 2\Lambda\Re(\overline{\psi}B\psi)$, and $r_h = u\cdot\nabla\rho_h - \left( u\cdot\nabla\rho \right)_h$. We multiply this by $\mathfrak{R}'(\rho_h)$, for a $C^1$ function $\mathfrak{R}:\R\mapsto\R$. This yields: \medskip
    
    \begin{equation} \label{mollified renormalized continuity equation}
        \partial_t \mathfrak{R}(\rho_h) + u\cdot\nabla\mathfrak{R}(\rho_h) = \mathfrak{R}'(\rho_h)[\Psi_h + r_h]
    \end{equation}\medskip
    
    From Lemma 2.3 in \cite{Lions1996MathematicalMechanics} and the boundedness of $\mathfrak{R}'$ (uniform, since $\rho$ only takes values in a compact subset of $\R$), we can see that $\mathfrak{R}'(\rho_h)r_h$ vanishes in $L^2([0,T];L^2_x)$ as $h\rightarrow 0$. Similarly, using a test function $\sigma$, we can take a distributional limit of the terms on the LHS (use properties of mollifiers $-$ see Theorem 6, Appendix C in \cite{Evans2010PartialEquations}). Lastly, as we will demonstrate shortly, $\psi$ and $B\psi$ have enough regularity so that we may pass to the limit in \eqref{mollified renormalized continuity equation}. Thus, we have shown that if $\rho$ is a weak solution of the continuity equation, then $\mathfrak{R}(\rho)$ solves (in a weak sense) \medskip 
    
    
    
    \begin{equation} \label{renormalized continuity equation}
        \partial_t \mathfrak{R}(\rho) + u\cdot\nabla\mathfrak{R}(\rho) = \mathfrak{R}'(\rho)\Psi
    \end{equation}\medskip

    Taking the difference of \eqref{mollified continuity equation} for two different parameters $h_1,h_2 >0$, we then multiply the resulting equation by $(\rho_{h_1} - \rho_{h_2})$ and integrate over $\Omega$ to obtain: \medskip
    
    \begin{align*}
        \frac{d}{dt} \frac{1}{2}\norm{\rho_{h_1} - \rho_{h_2}}_{L^2_x}^2 &= \int_{\Omega} \left( \rho_{h_1} - \rho_{h_2} \right) \left[ (\Psi_{h_1} - \Psi_{h_2}) + (r_{h_1} - r_{h_2}) \right] \\
        &\le \norm{\rho_{h_1} - \rho_{h_2}}_{L^2_x} \left[ \norm{\Psi_{h_1} - \Psi_{h_2}}_{L^2_x} + \norm{r_{h_1} - r_{h_2}}_{L^2_x} \right]
    \end{align*}\medskip
    
    \noindent which implies, by Gr\"onwall's inequality:
    
    \begin{equation*}
        \sup_{t\in [0,T]}\norm{\rho_{h_1} - \rho_{h_2}}_{L^2_x} \lesssim \norm{\rho(0)*\zeta_{h_1} - \rho(0)*\zeta_{h_2}}_{L^2_x} + T^{\frac{1}{2}}\left[ \norm{\Psi_{h_1} - \Psi_{h_2}}_{L^2_{t,x}} + \norm{r_{h_1} - r_{h_2}}_{L^2_{t,x}} \right]
    \end{equation*}\medskip
    
    All of the terms on the RHS vanish as $h_1,h_2\rightarrow 0$, thus giving us a Cauchy sequence in $C([0,T];L^2_x)$. Hence, $\rho_h$ converges to $\rho$ in $C([0,T];L^2_x)$. We have, so far, proved that our ``original approximations" of the continuity equation $\rho^N$ converge in $C_w([0,T];L^2_x)$ to $\rho$, and that the latter also belong to $C([0,T];L^2_x)$. To achieve what we set out to do, i.e., that $\rho^N$ converges strongly in $C([0,T];L^2_x)$ to $\rho$, it remains to show convergence of the norms. Explicitly, if there is a sequence of times $t^N \rightarrow t$, then we need $\rho^N(t^N)$ to converge in $L^2_x$ to $\rho(t)$. Returning to \eqref{approximate continuity equation}, we look at its renormalized version with $\mathfrak{R}(x) = x^2$, and integrate over $\Omega$ (and then from $0$ to $t^N$) to get: \medskip
    
    \begin{equation*}
        \int_{\Omega} (\rho^N(t^N))^2 = \int_{\Omega}(\rho_0^N)^2 + 2\Lambda\Re \int_0^{t^N}\int_{\Omega} \rho^N \overline{\psi^N} B^N\psi^N
    \end{equation*}\medskip
    
    Since we know that $\rho\in C^([0,T];L^2_x)$, we can do the same calculation with \eqref{continuity}, except the time integral goes from $0$ to $t$. \medskip
    
    \begin{equation*}
        \int_{\Omega} (\rho(t))^2 = \int_{\Omega}(\rho_0)^2 + 2\Lambda\Re \int_0^{t}\int_{\Omega} \rho \overline{\psi} B\psi
    \end{equation*}\medskip
    
    We now subtract the last two equations, and take the limit $N\rightarrow\infty$. Recall from Section \ref{the initial density} that $\rho_0^N \xrightarrow[]{L^2_x}\rho_0$, to cancel the first terms on the RHS. What remains is: \medskip
    
    \begin{equation*}
        \lim_{N\rightarrow\infty} \left[ \int_{\Omega} (\rho^N(t^N))^2 - \int_{\Omega} (\rho(t))^2 \right] 
        \!\begin{multlined}[t]
            = 2\Lambda\Re \lim_{N\rightarrow\infty} \left[ \int_0^{t^N}\int_{\Omega} (\rho^N-\rho) \overline{\psi^N} B^N\psi^N \right. \\ \left. + \int_0^{t^N}\int_{\Omega} \rho \left(\overline{\psi^N}-\overline{\psi}\right) B^N\psi^N + \int_0^{t^N}\int_{\Omega} \rho \overline{\psi} \left( B^N\psi^N - B\psi \right) \right. \\ \left. + \int_t^{t^N}\int_{\Omega} \rho \overline{\psi} B\psi \right] 
        \end{multlined} 
    \end{equation*}\medskip
    
    Thanks to the uniform boundedness of $\overline{\psi^N} B^N\psi^N$ in $L^1_{[0,T]} H^{\frac{3}{2}+\delta}_x$, we can use the strong convergence of $\rho^N$ to $\rho$ in $C([0,T];H^{0^-}_x)$ to handle the first term on the RHS. The second and third terms follow from simple H\"older's inequalities, once we have established the strong convergence of the wavefunction and of $B\psi$, both of which will be done later in this section. Finally, the last term is integrable on $[0,T]$, so as $t^N\rightarrow t$, it vanishes. In summary, \medskip
    
    \begin{equation} \label{strong convergence of density in C^0_t L^2_x}
        \rho^N \xrightarrow[N\rightarrow\infty]{C^0_t L^2_x} \rho
    \end{equation} \medskip

    \begin{rem}\label{C1 requirement for mathfrak(R)}
        In \cite{Lions1996MathematicalMechanics}, the well-known renormalization procedure was used to show (for the continuity equation without the source term) that $\rho^N$ converges to $\rho$ in $C([0,T];L^p_x)$ for $1\le p<\infty$. We have not pursued general $L^p_x$ norms here; only the case $p=2$ is considered. In addition, the absence of a source term in \cite{Lions1996MathematicalMechanics} meant it was sufficient to use $\mathfrak{R}\in C(\R)$. In our case, we require $\mathfrak{R}\in H^1(\R) \subset C(\R)$.
    \end{rem}\medskip

    \item \textit{Velocity}: According to the a priori estimates, $u^N\in L^{\infty}_{[0,T]}H^{\frac{3}{2}+\delta}_{d,x} \ \cap \ L^2_{[0,T]}H^2_{d,x}$ and $\partial_t u^N \in L^2_{[0,T]}L^2_{x} \subset L^2_{[0,T]}H^{0^-}_{x}$. Thus, Lemma \ref{aubin-lions} implies
    
    \begin{equation} \label{strong convergence of velocity}
        u^N \xrightarrow[C^0_t H^{\frac{3}{2}+\delta^-}_{d,x}]{L^2_t H^{2^{-}}_{d,x}} u
    \end{equation}\medskip
    
    \noindent with the convergence being strong (possibly of a subsequence). Moreover, since $u\in L^2_{[0,T]} H^{2^-}_{d,x}$ and $\partial_t u \in L^2_{[0,T]} L^2_x \subset L^2_{[0,T]}\left(H^{2^-}_{0,x} \right)^* \subset L^2_{[0,T]}\left(H^{2^-}_{d,x} \right)^*$, we can use Lemma \ref{lions-magenes} to deduce that $u\in C([0,T];H^{\frac{3}{2}+\delta}_{d,x})$. Thus, the velocity attains its initial condition in the strong sense. \medskip
    
    \item \textit{Momentum}: Based on the above results on the strong convergence of $\rho^N$ and $u^N$ (and in particular, the $L^{\infty}_{t} L^{\infty}_x$ bound on the latter), it is easy to see that $\rho^N u^N$ and $\rho^N u^N \otimes u^N$ converge in $C([0,T];L^2_x)$ to $\rho u$ and $\rho u \otimes u$, respectively. \medskip

    \begin{rem} \label{Kim's work - momentum and nonlinear terms}
        This is a good time to point out some interesting aspects of the calculations performed in \cite{Kim1987WeakDensity}. Since they did not have a positive lower bound on the density, there was no way to uniformly bound $\partial_t u^N$, i.e., a strongly convergent subsequence of $u^N$ could not be identified to manipulate the nonlinear (advective) term. The workaround this was to first show $\rho^N u^N$ converged to $\rho u$ in distribution (smooth test functions), and then use the uniform boundedness of $\rho^N u^N$ in $L^2(0,T;L^2_x)$ to extract a subsequence that converged weakly to some $g$. From the uniqueness of weak limits, it was argued that $g=\rho u$. After this, a uniform bound on $u^N \partial_t \rho^N$ was derived, and combining this with one on $\rho^N \partial_t u^N$, the quantity $\partial_t (\rho^N u^N)$ was uniformly bounded above (in an appropriate negative-order Sobolev space). Thus, compactness easily follows, to extract a strongly convergent subsequence $\rho^N u^N\rightarrow \rho u$. This allowed to show a weakly convergent subsequence for the nonlinear term $\rho^N u^N\otimes u^N$. The important takeaway from this brief detour is that the lack of a uniform bound on $\partial_t u^N$ was the main issue in \cite{Kim1987WeakDensity}; the strong convergence of the density in $C([0,T];L^2_x)$ (even in the $L^p_x$ norms) could have very well been established given the framework of their proof.
    \end{rem}\medskip

    \item \textit{Wavefunction}: We have $\psi^N\in L^{\infty}_{[0,T]}H^{\frac{5}{2}+\delta}_{0,x} \ \cap \ L^2_{[0,T]}H^{\frac{7}{2}+\delta}_{0,x}$ and $\partial_t \psi^N \in L^2_{[0,T]}L^2_{x} \subset L^2_{[0,T]}H^{0^-}_{x}$. Thus, Lemma \ref{aubin-lions} implies
    
    \begin{equation} \label{strong convergence of wavefunction}
        \psi^N \xrightarrow[C^0_t H^{\frac{5}{2}+\delta^-}_x]{L^2_t H^{\frac{7}{2}+\delta^-}_{0,x}} \psi
    \end{equation}\medskip
    
    Just as in the case of the velocity, combining $\psi\in L^2_{[0,T]} H^{\frac{7}{2}+\delta^-}_{0,x}$ and $\partial_t \psi \in L^2_{[0,T]} L^2_x \subset L^2_{[0,T]}\left(H^{\frac{7}{2}+\delta^-}_{0,x} \right)^*$, we can use Lemma \ref{lions-magenes} to get $\psi\in C([0,T];H^{\frac{5}{2}+\delta}_{0,x})$. Therefore, the wavefunction also attains its initial condition in the strong sense. Finally, using \eqref{strong convergence of velocity} and \eqref{strong convergence of wavefunction}, we can also conclude that $B^N \psi^N \xrightarrow[]{C^0_t L^2_x} B\psi$. \medskip

    \item \textit{Initial conditions}: By construction (Section \ref{the initial density}), we have $\rho_0^N \xrightarrow[]{L^2_x} \rho_0$. Also, Corollary \ref{truncated initial conditions are convergent} states that $u_0^N$ and $\psi_0^N$ converge to $u_0$ and $\psi_0$ in $H^{\frac{3}{2}+\delta}_x$ and $H^{\frac{5}{2}+\delta}_x$, respectively. For the momentum, we have:
    
    \begin{equation}
        \norm{\rho_0^N u_0^N - \rho_0 u_0}_{L^2_x} \le \norm{\rho_0^N - \rho_0}_{L^2_x} \norm{u_0^N}_{L^{\infty}_x} + \norm{\rho_0}_{L^{\infty}_x} \norm{u_0^N - u_0}_{L^2_x}
    \end{equation}\medskip
    
    Using the embedding $H^{\frac{3}{2}+\delta}_x \subset L^{\infty}_x$ to handle the velocity norm in the first term of the RHS, it is easy to see that the initial momentum converges in the $L^2_{x}$ norm. \medskip

    \item \textit{Time derivatives}: From \eqref{final a priori bounds} and \eqref{partial_t rho^N uniformly bounded in L^2_t H^-1_x}, we know that $\partial_t \psi^N,\partial_t u^N \in L^2_{[0,T]}L^2_x \subset L^2_{[0,T]}H^{-1}_x$, and $\partial_t \rho^N \in L^2_{[0,T]}H^{-1}_x$ (all uniformly in $N$). In other words, all the fields are in $H^1_{[0,T]}H^{-1}_x$, and a weakly convergent subsequence can be extracted in the same Hilbert space. Thus, the final convergent fields also belong to $H^1_{[0,T]}H^{-1}_x$, implying that they can be used as test functions, which is justified by interpreting the integral over $\Omega$ as an inner product between functions from a Hilbert space and its dual.\medskip

\end{enumerate}

\subsubsection{Passing to the limit}

We finally return to the weak solution of the Pitaevskii model, as defined in \eqref{weak solution wavefunction}, \eqref{weak solution velocity} and \eqref{weak solution density}. First, observe that \eqref{expansion of approximate NSE} is obtained by taking the $L^2_x$ inner product of \eqref{approximate NSE} with $a_j$ (eigenfunctions of the bi-Stokes operator), for each $j \in \{ 1,2,\dots N \}$. Therefore, we can multiply \eqref{expansion of approximate NSE} by some $\eta^u_j \in C^{\infty}([0,T])$ and sum over $j$ from $1$ to some $N'\le N$. Combining this with \eqref{approximate continuity equation}, and integrating over $[0,T]$:

\begin{equation} \label{weak solution velocity - finite test function}
\begin{multlined}
    -\int_0^T \int_{\Omega} \left[ \rho^N u^N\cdot \partial_t \left( \sum_{j=1}^{N'} \eta^u_j a_j \right) + \rho^N u^N\otimes u^N:\nabla\left( \sum_{j=1}^{N'} \eta^u_j a_j \right) \right. \\ \left. - \nu\nabla u^N:\nabla\left( \sum_{j=1}^{N'} \eta^u_j a_j \right) - 2\Lambda\left( \sum_{j=1}^{N'} \eta^u_j a_j \right)\cdot\Im(\nabla\overline{\psi^N}B^N\psi^N) \right] dx \ dt \\ = \int_{\Omega} \left[ \rho^N_0 u^N_0 \left( \sum_{j=1}^{N'} \eta^u_j(t=0) a_j \right) - \rho^N(T)u^N(T)\left( \sum_{j=1}^{N'} \eta^u_j(T) a_j \right) \right] dx
\end{multlined}
\end{equation}\medskip

A similar procedure with \eqref{approximate NLS} and \eqref{approximate continuity equation} yields

\begin{equation} \label{weak solution wavefunction - finite test function}
\begin{multlined} 
    -\int_0^T \int_{\Omega} \left[ \psi^N\partial_t \left( \sum_{j=1}^{N'} \overline{\eta^w_j} b_j \right) + \frac{1}{2i}\nabla\psi^N\cdot\nabla\left( \sum_{j=1}^{N'} \overline{\eta^w_j} b_j \right) \right. \\ \left. - \Lambda\left( \sum_{j=1}^{N'} \overline{\eta^w_j} b_j \right)B^N\psi^N - i\mu\left( \sum_{j=1}^{N'} \overline{\eta^w_j} b_j \right)\abs{ \psi^N}^2\psi^N \right] dx \ dt \\ = \int_{\Omega} \left[ \psi^N_0\left( \sum_{j=1}^{N'} \overline{\eta^w_j}(t=0) b_j \right) - \psi^N(T)\left( \sum_{j=1}^{N'} \overline{\eta^w_j}(T) b_j \right) \right] dx
\end{multlined}
\end{equation}\medskip

\noindent and
    
\begin{equation} \label{weak solution density - finite test function}
\begin{multlined} 
    -\int_0^T \int_{\Omega} \left[ \rho^N \partial_t \left( \sum_{j=1}^{N'} \eta^d_j v_j \right) + \rho^N u^N\cdot\nabla\left( \sum_{j=1}^{N'} \eta^d_j v_j \right) \right. \\ \left. + 2\Lambda \left( \sum_{j=1}^{N'} \eta^d_j v_j \right) \Re\left(\overline{\psi^N}B^N\psi^N\right) \right] dx \ dt \\ = \int_{\Omega} \left[ \rho^N_0 \left( \sum_{j=1}^{N'} \eta^d_j(0) v_j \right) - \rho^N(T)\left( \sum_{j=1}^{N'} \eta^d_j(T) v_j \right) \right] dx
\end{multlined}
\end{equation}\medskip

\noindent where $\eta^w_j$ and $\eta^d_j$ also belong to $C^{\infty}([0,T])$, except that the former takes complex values and the latter, real values. The functions $b_j$ are the eigenfunctions of the penta-Laplacian, used earlier for setting up the semi-Galerkin truncated wavefunction. Finally, the sequence $v_j\in C^{\infty}(\overline{\Omega})$.\medskip

The linear combinations just considered, like $\sum_{j=1}^{N'} \eta^u_j a_j$, fit the criteria of test functions in Definition \ref{definition of weak solutions}. Therefore, for a fixed $N'$, with all the weak and strong convergences in Section \ref{weak and strong convergences}, we can pass to the limit $N\rightarrow\infty$ in \eqref{weak solution velocity - finite test function}, \eqref{weak solution wavefunction - finite test function} and \eqref{weak solution density - finite test function}. This leads us back to \eqref{weak solution velocity}, \eqref{weak solution wavefunction} and \eqref{weak solution density} respectively, with the caveat that the test functions are still smooth in space and time. The test functions appearing in Definition \ref{definition of weak solutions} had space-time regularities that were in Sobolev spaces, and can thus they can be approximated by smooth functions (with the appropriate boundary conditions, which explains the choice of the bases used in the semi-Galerkin truncation). Hence, we can now pass to the limit $N'\rightarrow\infty$ to obtain the required regularities of the test functions and in turn, the weak solutions we seek.\medskip

\subsection{The energy equality}

The smooth approximations to the weak solutions satisfy an energy equality, given by \eqref{energy bound E0} and \eqref{E0 definition}.

\begin{equation} \label{energy equality for smooth approximations}
\begin{multlined}
    \left( \frac{1}{2}\norm{\sqrt{\rho^N}u^N}_{L^2_x}^2 + \frac{1}{2}\norm{\nabla\psi^N}_{L^2_x}^2 + \frac{\mu}{2}\norm{\psi^N}_{L^4_x}^4 \right)(t) + \nu\norm{\nabla u^N}_{L^2_{[0,t]}L^2_x}^2 + 2\Lambda\norm{B^N\psi^N}_{L^2_{[0,t]}L^2_x}^2 \\ = \frac{1}{2}\norm{\sqrt{\rho_0^N}u_0^N}_{L^2_x}^2 + \frac{1}{2}\norm{\nabla\psi_0^N}_{L^2_x}^2 + \frac{\mu}{2}\norm{\psi_0^N}_{L^4_x}^4 \quad a.e. \ t\in [0,T]
\end{multlined}
\end{equation}\medskip

From our choice of the initial conditions and their approximations (see Section \ref{the initial conditions}), we can ensure that as $N\rightarrow\infty$, the RHS converges to the initial energy $E_0$ defined in \eqref{E0 definition}. Indeed, \medskip

\begin{equation} \label{convergence of initial kinetic energy}
    \begin{aligned} 
        \abs{\norm{\sqrt{\rho_0^N}u_0^N}_{L^2_x}^2 - \norm{\sqrt{\rho_0}u_0}_{L^2_x}^2} &= \abs{\int_{\Omega} \rho_0^N \abs{u_0^N}^2 - \rho_0 \abs{u_0}^2} \\
        &\lesssim \norm{\rho_0^N - \rho_0}_{L^2_x} \norm{u_0^N}_{L^6_x}^2 + \norm{\rho_0}_{L^{\infty}_x} \norm{u_0^N + u_0}_{L^2_x} \norm{u_0^N - u_0}_{L^2_x} \\
        &\xrightarrow[N\rightarrow\infty]{} 0
\end{aligned}
\end{equation}\medskip

Moreover, based on the results of Section \ref{weak and strong convergences}, we can conclude that all the terms on the LHS converge strongly to the corresponding terms with the approximate solutions replaced by the weak solution. (The first term on the LHS can be dealt with the same way as the first term on the RHS in \eqref{convergence of initial kinetic energy}, by simply including a $\sup_t$ outside the absolute values.) \medskip

\qed

\medskip

This completes the proof of Theorem \ref{local existence} - local existence of weak solutions and the energy equality. We will now give a quick proof of Proposition \ref{energy inequality - Kim}. \medskip

\section{Proof of Proposition \ref{energy inequality - Kim}} \label{energy inequality proof - Kim}

Using the approximate solutions, it is easy to see that the corresponding energy equation in this case is given by:

\begin{equation} \label{energy equation approximate - Kim}
    \frac{1}{2}\norm{\sqrt{\rho^N}u^N}_{L^2_x}^2 (t) + \nu\norm{\nabla u^N}_{L^2_{[0,t]}L^2_x}^2 = \frac{1}{2}\norm{\sqrt{\rho_0^N}u_0^N}_{L^2_x}^2 \quad a.e. \ t\in [0,T]
\end{equation}\medskip

The bounds (uniform in $N$) on the approximations, and their convergence properties are as follows. \medskip

\begin{equation} \label{uniform a priori bounds - Kim}
    \begin{gathered}
        \norm{\sqrt{\rho^N}\partial_t u^N}_{L^2_{[0,T]} L^2_x} , \norm{u^N}_{L^{\infty}_{[0,T]} H^1_x} , \norm{u^N}_{L^2_{[0,T]} H^2_x} , \norm{\partial_t \rho^N}_{L^{\infty}_{[0,T]} H^{-1}_x} \le C \\
        \frac{1}{N} \le \rho^N(t,x) \le M+\frac{1}{N} \quad a.e. \ (t,x) \in (0,T\times \Omega) \\
        \rho_0^N \xrightarrow[]{L^2_x} \rho_0 \quad , \quad u_0^N \xrightarrow[]{L^2_x} u_0
    \end{gathered}
\end{equation}\medskip

\noindent where the constant $C$ depends on the initial conditions, the time $T$. As explained in Remark \ref{Kim's work - momentum and nonlinear terms}, compactness arguments were used to extract some strongly convergent subsequences (relabeled). \medskip

\begin{equation} \label{strong convergence - Kim}
    \begin{gathered}
        \rho^N \xrightarrow[]{L^2_t H^{-1}_x} \rho \\
        \rho^N u^N \xrightarrow[]{L^2_t H^{-1}_x} \rho u \\
    \end{gathered}
\end{equation}\medskip

Given these estimates, the RHS of \eqref{energy equation approximate - Kim} can be shown to converge to $\frac{1}{2}\norm{\sqrt{\rho_0}u_0}_{L^2_x}^2$ in exactly the same way as \eqref{convergence of initial kinetic energy}. Due to the weak convergence of $u^N$ in $L^2(0,T;H^1_{d,x})$, and the lower semicontinuity of the norm, we have $\norm{\nabla u}_{L^2_{[0,t]}L^2_x}^2 \le  \liminf_{N\rightarrow\infty} \norm{\nabla u^N}_{L^2_{[0,t]}L^2_x}^2$. What remains is the first term on the LHS, and we will proceed as follows: \medskip

\begin{equation} \label{kinetic energy in L^2_t L^2_x - Kim}
    \begin{aligned}
        \int_0^T \int_{\Omega} \left[ \rho^N\abs{u^N}^2 - \rho\abs{u}^2 \right] &= \int_0^T \int_{\Omega} (\rho^N u^N - \rho u)\cdot u^N + \int_0^T \int_{\Omega} \rho u\cdot(u^N - u) \\
        &\le T^{\frac{1}{2}} \norm{\rho^N u^N - \rho u}_{L^2_t H^{-1}_x} \norm{u^N}_{L^{\infty}_t H^1_x} + \int_0^T \langle u^N - u,\rho u \rangle_{H^1_0 \times H^{-1}}
    \end{aligned}
\end{equation}\medskip

\noindent The first term vanishes due to the strong convergence of $\rho^N u^N$, while the second term goes to zero due to the weak-* convergence of $u^N$. Thus, we conclude that $\norm{\sqrt{\rho^N}u^N}_{L^2_{[0,T]} L^2_x}$ converges to $\norm{\sqrt{\rho}u}_{L^2_{[0,T]} L^2_x}$. Let us now define \medskip

\begin{equation} \label{defining f^N(t)}
    f^N(t) := \norm{\sqrt{\rho^N}u^N}_{L^2_x}(t) \quad , \quad f^N(t) := \norm{\sqrt{\rho}u}_{L^2_x}(t)
\end{equation}\medskip

We have shown that $\norm{f^N}_{L^2_t} \rightarrow \norm{f}_{L^2_t}$. From a calculation mirroring that in \eqref{kinetic energy in L^2_t L^2_x - Kim}, it is easy to see that $f^N \xrightharpoonup[]{\mathfrak{D}^*_t} f$, i.e., for all $\Theta \in C^{\infty}_c[0,T]$, $\lim_{N\rightarrow\infty}\int_0^T (f^N - f)\Theta = 0$. Furthermore, since the RHS of \eqref{energy equation approximate - Kim} converges strongly, it is bounded; therefore, $\norm{f^N}_{L^2_t}$ is also bounded\footnote{It should be mentioned here that $T$ is finite, depending only on the initial data and size of the domain.} uniformly in $N$. We can extract a subsequence (relabeled) that is weakly convergent in $L^2(0,T)$, i.e., $f^N \xrightharpoonup[]{L^2_t} g$, where $g\in L^2(0,T)$. Combining this with the convergence in distribution $(\mathfrak{D}^*_t)$ and the uniqueness of weak limits, one deduces that indeed, $g=f$ a.e. In summary, we have shown that $f^N$ converges to $f$ weakly in $L^2(0,T)$, while $\norm{f^N}_{L^2_t}$ converges to $\norm{f}_{L^2_t}$. This implies the strong convergence of $f^N$ to $f$ in $L^2(0,T)$, which in turn means that we can select a subsequence that converges a.e. Consequently, we have shown that the energy inequality\footnote{It is worth noting that the obstacle to an energy \textit{equality} in the work by Kim was the lack of strong convergence of the dissipative term; yet again, this boils down to the fact that there is no uniform bound on $\partial_t u$ (since the density is not bounded below).} holds along a subsequence, for a.e. $t\in [0,T]$. \medskip

\qed

\bigskip



\bibliographystyle{alpha}
\bibliography{references}

%
%

%
%

\end{document}